\newcommand{\Marginpar}[1]{\marginpar{\tiny{#1}}}
\newcommand{\Note}[1]{{\par\noindent\hrulefill\par\tiny{#1}\par\noindent\hrulefill\par}}
\newcommand{\Detail}[1]{{#1}}
\renewcommand{\Marginpar}[1]{}
\renewcommand{\Note}[1]{}
\renewcommand{\Detail}[1]{}
\newtheorem{thm}{Theorem}
\newtheorem*{thm*}{Theorem}
\newtheorem{prop}[thm]{Proposition}
\newtheorem{lem}[thm]{Lemma}
\newtheorem*{lem*}{Lemma}
\newtheorem{cor}[thm]{Corollary}
\theoremstyle{definition}
\newtheorem{defn}[thm]{Definition}
\newtheorem*{defn*}{Definition}
\newtheorem{rem}[thm]{Remark}
\renewcommand{\[}{\begin{equation*}}
\renewcommand{\]}{\end{equation*}}
\DeclareMathOperator\tr{tr}
\def \R {\mathbb R}
\def \g {\mathfrak g}
\def \n {\mathfrak n}
\def \Na {\nabla}
\def \tr {\mbox{tr}}
\def \ad {\mbox{ad}}
\begin{document}
\parskip1mm

\title[Second-Chern--Einstein metrics]{Second-Chern--Einstein metrics on $4$-dimensional almost-Hermitian manifolds}

\author{Giuseppe Barbaro}
\address{Dipartimento di Matematica ``Guido Castelnuovo'', Università La Sapienza di Roma, Piazzale Aldo Moro 5, 00185 Roma}
\email{g.barbaro@uniroma1.it}

\author{Mehdi Lejmi}
\address{Department of Mathematics, Bronx Community College of CUNY, Bronx, NY 10453, USA.}
\email{mehdi.lejmi@bcc.cuny.edu}

\thanks{The authors are thankful to Daniele Angella for his comments. The first author would also like to thank the CUNY Graduate Center of New York, Mehdi Lejmi and Scott Wilson for their warm hospitality. The first author is supported by GNSAGA of INdAM and by project PRIN2017 ``Real and Complex Manifolds: Topology, Geometry and holomorphic dynamics'' (code 2017JZ2SW5). The second author is supported by the Simons Foundation Grant \#636075.}

\keywords{Almost-Hermitian Metrics, Chern--Einstein metrics, Hermitian connections, Weyl connection. }

\subjclass[2010]{53C55 (primary); 53B35 (secondary)} 

\begin{abstract}
    We study $4$-dimensional second-Chern--Einstein almost-Hermitian manifolds. In the compact case, we observe that under a certain hypothesis the Riemannian dual of the Lee form is a Killing vector field. We use that observation to describe $4$-dimensional compact second-Chern--Einstein locally conformally symplectic manifolds and we give some examples of such manifolds. Finally, we study the second-Chern--Einstein problem on unimodular almost-abelian Lie algebras, classifying those that admit a left-invariant second-Chern--Einstein metric with a parallel non-zero Lee form.

\end{abstract}

\maketitle

\section{introduction}
    An almost-Hermitian manifold $(M,g,J)$ is a manifold $M$ equipped with a Riemannian metric $g$ and a $g$-orthogonal almost-complex structure $J.$ The almost-Hermitian structure $(g,J)$ induces the fundamental $2$-form $F(\cdot,\cdot)=g(J\cdot,\cdot).$ The Lee form $\theta$ associated to the almost-Hermitian structure $(g,J)$ is defined as
    $$dF^{n-1}=\frac{1}{n-1}\theta\wedge F^{n-1},$$
    where $d$ is the exterior derivative and $2n$ is the real dimension of the manifold. The almost-Hermitian metric $g$ (with a unit total volume) is called Gauduchon if $\delta^g\theta=0$, where $\delta^g$ is the adjoint of $d$ with respect to $g$, and almost-K\"ahler if $dF=0$. On the other hand, if the almost-complex structure $J$ is integrable then the pair $(g,J)$ is a Hermitian structure and it is K\"ahler if it is almost-K\"ahler. A $4$-dimensional almost-Hermitian manifold $(M,g,J)$ is locally conformally symplectic (LCS) if $d\theta=0$, while in higher dimension the LCS condition becomes $dF=\theta\wedge F$~\cite{MR418003,MR809073} (for a general introduction to the subject see~\cite{MR3880223}). In the integrable case, LCS is locally conformally K{\"a}hler (LCK).

    In the Hermitian and almost-Hermitian geometry there are some natural connections other then $D^g$ the Levi--Civita one. Among these we have the Chern connection $\nabla$~\cite{MR66733,MR0165458,MR1456265} defined as the unique connection preserving the almost-Hermitian structure and having a $J$-anti-invariant torsion; the Bismut connection $\nabla^B$~\cite{MR1456265} which is, in the integrable case, the unique connection preserving the Hermitian structure and having a skew-symmetric torsion; and the canonical Weyl connection $D^W$ defined as the unique torsion-free connection such that $D^Wg=\theta\otimes g$. To any of these connections can be associated many different Einstein-type equations. See for example~\cite{MR1228613, MR1249377,MR1272308,MR1199072,MR1200290,MR1363825, MR1798617,MR1726786, MR2447177, Barco:2022ve} for the Einstein--Weyl problem and~\cite{MR2673720} for the Bismut--Einstein problem in the Hermitian case. 
    
    In this note we focus on the {\em Second-Chern--Einstein problem}, which is stated as follows (for more details see~\cite{MR571563,MR633563,MR1477631,MR4125707}). We define the second Chern--Ricci form $r$ as $$r=R^\nabla(F),$$ that is, the image of the fundamental form by the Chern curvature $R^\nabla$.
    \begin{defn*}
        Given an almost-Hermitian manifold $(M,g,J)$ of real dimension $2n$, the metric $g$ is said to be second-Chern--Einstein if 
        $$ r = \frac{tr_F \, r}{n}F.$$
    \end{defn*}
    In the Hermitian case, second-Chern--Einstein metrics were studied in~\cite{MR571563,MR1477631,MR2781927,MR2795448,MR2925476,MR3869430,MR4125707,Angella:2020vt}. The condition of being second-Chern--Einstein is conformally invariant (even in the non-integrable case~\cite{MR4184828}). Since $D^W$ is invariant under conformal change of the metric, that led to explore the relation between Einstein--Weyl metrics and second-Chern--Einstein metrics. It turns out that in real dimension $4$, in the Hermitian case, the Einstein--Weyl condition and the second-Chern--Einstein condition (as well as the Bismut--Einstein condition) are equivalent~\cite[Theorem 1]{MR1477631} and so the only compact Hermitian non-K\"ahler $4$-manifold admitting a second-Chern--Einstein metric is the Hopf surface~\cite{MR1477631}. Even if this equivalence is no longer true for almost-Hermitian structures some crucial relations persist, see for example Corollary~\ref{cor_2-chern} and Proposition~\ref{prop: second chern first bismut}.

    In this note, we investigate the existence of second-Chern--Einstein metrics on $4$-dimensional almost-Hermitian manifolds. In Section~\ref{Sec_2}, we collect some general results about the geometry of these manifolds. In particular, we observe the following (for the analog in the Einstein--Weyl case see~\cite{MR1171560})
    \begin{lem*}[Lemma~\ref{killing}]
        Let $(M,\tilde{g},J)$ be a compact $4$-dimensional almost-Hermitian manifold and $\tilde{g}$ is a second-Chern--Einstein metric. Suppose that the Gauduchon metric $g$ in the conformal class $[\tilde{g}]$ satisfies $(D^g\theta)^{sym,J,-}=0$, where $D^g$ is the Levi--Civita connection of $g$ and $\theta$ is the Lee form of $(g,J)$. Then, the $g$-Riemannian dual of $\theta$ is a Killing vector field of $g$. Here $(\cdot)^{sym,J,-}$ denotes the $g$-symmetric $J$-anti-invariant part.
    \end{lem*}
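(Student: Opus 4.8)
The plan is to use the conformal invariance of the equation to pass to the Gauduchon gauge, rewrite the Killing condition as the vanishing of a symmetric tensor, and then identify the only surviving piece of that tensor with the primitive part of the second Chern--Ricci form, which the second-Chern--Einstein equation kills.

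First, since the second-Chern--Einstein condition is conformally invariant and $g$ lies in $[\tilde g]$, the Gauduchon metric $g$ is itself second-Chern--Einstein, so I may work entirely with $g$. Recall that $\theta^\sharp$ is Killing for $g$ exactly when $(D^g\theta)^{sym}=0$, where $(D^g\theta)^{sym}=\tfrac12\mathcal{L}_{\theta^\sharp}g$. Splitting into $J$-invariant and $J$-anti-invariant parts gives $(D^g\theta)^{sym}=(D^g\theta)^{sym,J,+}+(D^g\theta)^{sym,J,-}$, and the second summand is zero by hypothesis. Hence the entire problem reduces to proving $(D^g\theta)^{sym,J,+}=0$. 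I would next observe that this remaining piece is trace-free: the Gauduchon condition gives $\tr_g(D^g\theta)^{sym}=-\delta^g\theta=0$, while the $J$-anti-invariant part is automatically $g$-trace-free, so $(D^g\theta)^{sym,J,+}$ has vanishing trace. Consequently the associated real $(1,1)$-form $\beta(\cdot,\cdot):=(D^g\theta)^{sym,J,+}(J\cdot,\cdot)$ is \emph{primitive}, since $\tr_F\beta$ is proportional to $\tr_g(D^g\theta)^{sym,J,+}=0$.

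The heart of the argument is then a curvature identity. Using the relation between the Chern curvature $R^\nabla$ and the Levi--Civita curvature of $g$ (the structure formulas collected in Section~\ref{Sec_2}), I would express the primitive part of the second Chern--Ricci form $r=R^\nabla(F)$ as a nonzero universal multiple of $\beta$ plus correction terms built from the Nijenhuis tensor and from $(D^g\theta)^{sym,J,-}$. The latter vanishes by hypothesis. The second-Chern--Einstein equation $r=\tfrac{\tr_F r}{2}F$ forces the primitive $(1,1)$-part of $r$ to vanish identically, and the identity then yields $\beta=0$, i.e. $(D^g\theta)^{sym,J,+}=0$. Combining this with the hypothesis $(D^g\theta)^{sym,J,-}=0$ gives $(D^g\theta)^{sym}=0$, which is precisely the assertion that $\theta^\sharp$ is a Killing vector field of $g$.

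The step I expect to be the main obstacle is the non-integrability. In the presence of a nontrivial Nijenhuis tensor the difference between $R^\nabla$ and the Riemannian curvature, and hence the precise shape of the primitive $(1,1)$-part of $r$, picks up extra terms absent in the Hermitian case; the crucial point is to verify, specifically in real dimension $4$ and under the standing hypotheses, that these Nijenhuis-type corrections either vanish or are proportional to the already-controlled tensor $(D^g\theta)^{sym,J,-}$, so that the equation genuinely isolates $\beta$. As a safeguard, one may supplement the pointwise identity with an integral Weitzenb\"ock argument: the Gauduchon condition $\delta^g\theta=0$ gives $\int_M|d\theta|^2=\int_M|D^g\theta|^2+\int_M \mathrm{Ric}^g(\theta^\sharp,\theta^\sharp)$, into which the second-Chern--Einstein information on the Ricci-type curvature can be fed to force $\int_M|(D^g\theta)^{sym}|^2=0$, recovering the same conclusion.
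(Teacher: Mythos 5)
Your reduction to showing $(D^g\theta)^{sym,J,+}=0$ in the Gauduchon gauge is exactly how the paper starts, but the step you call the ``heart of the argument'' contains a genuine gap: the claimed pointwise identity is false. By the paper's Corollary~\ref{chern-ric},
\[
r_{\cdot,J\cdot}=\left(Ric^g\right)^{J,+}+\tfrac{1}{4}\|N\|^2g+\left(D^g\theta\right)^{sym,J,+}+\tfrac{1}{2}\left(\theta\otimes\theta\right)^{J,+},
\]
so the primitive (trace-free) part of $r$ is \emph{not} a universal multiple of $\beta=(D^g\theta)^{sym,J,+}$ up to Nijenhuis and $(D^g\theta)^{sym,J,-}$ corrections: it also contains the trace-free parts of $\left(Ric^g\right)^{J,+}$ and of $\tfrac{1}{2}\left(\theta\otimes\theta\right)^{J,+}$, neither of which is controlled by your hypotheses. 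Setting the primitive part of $r$ to zero therefore only yields
\[
\left(D^g\theta\right)^{sym,J,+}_0=-\left(Ric^g\right)^{J,+}_0-\tfrac{1}{2}\left(\theta\otimes\theta\right)^{J,+}_0,
\]
which does not imply $\beta=0$. The standard Hopf surface shows these extra terms are genuinely present: there $\theta$ is parallel (so $\beta=0$), $J$ is integrable, yet $\left(Ric^g\right)^{J,+}_0=-\tfrac{1}{2}\left(\theta\otimes\theta\right)^{J,+}_0\neq 0$, so no pointwise identity of the shape you propose can hold.

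The conclusion can only be reached by an integral argument, and the paper's is different from your Weitzenb\"ock safeguard: one pairs the full identity above with $(D^g\theta)^{sym}$ (which equals $(D^g\theta)^{sym,J,+}$ by hypothesis), integrates over $M$, and then integrates by parts using the contracted Bianchi identity $\delta^gRic^g=-\tfrac{1}{2}ds^g$ and the Gauduchon condition $\delta^g\theta=0$; the Ricci term and the $\theta\otimes\theta$ term then integrate to zero against $(D^g\theta)^{sym}$, leaving $\int_M\|(D^g\theta)^{sym}\|^2\,v_g=0$. Your proposed Bochner identity $\int_M|d\theta|^2=\int_M|D^g\theta|^2+\int_M Ric^g(\theta^\sharp,\theta^\sharp)$ does not obviously close up, because the second-Chern--Einstein equation controls only $\left(Ric^g\right)^{J,+}$, i.e.\ the average $\tfrac{1}{2}\bigl(Ric^g(\theta^\sharp,\theta^\sharp)+Ric^g(J\theta^\sharp,J\theta^\sharp)\bigr)$, not $Ric^g(\theta^\sharp,\theta^\sharp)$ itself, and substituting it introduces terms such as $(D^g\theta)^{sym,J,+}(\theta^\sharp,\theta^\sharp)$ and $\|\theta\|^4$ with no sign control; as written, ``feeding in the curvature information'' is a hope rather than a proof.
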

    Taking twice the trace of the curvature of $D^W$, we get the conformal scalar curvature $s^W$, which enters the pictures through the following
    \begin{thm*}[Theorem~\ref{thm1}]
        Suppose that $(M,\tilde{g},J)$ is a $4$-dimensional compact locally conformally symplectic manifold and $\tilde{g}$ is a second-Chern--Einstein metric. Suppose that the Gauduchon metric $g$ in the conformal class $[\tilde{g}]$ satisfies $(D^g\theta)^{sym,J,-}=0$, where $\theta$ is the Lee form of $(g,J)$. Then, either
        \begin{enumerate}
            \item $(M,g,J)$ is a second-Chern--Einstein almost-K\"ahler manifold, or\\
            \item $\theta$ is $D^g$-parallel and the conformal scalar curvature $s^W$ is non-positive. Moreover, $s^W$ is identically zero if and only if $J$ is integrable and so $(M,J)$ is a Hopf surface as described in~\cite[Theorem 2]{MR1477631}. Furthermore, if $s^W$ is nowhere zero then $\chi=\sigma=0,$ where $\chi$ and $\sigma$ are the Euler class and signature of $M$ respectively.
        \end{enumerate}
    \end{thm*}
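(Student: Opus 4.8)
The plan is to reduce everything to the single statement that the Lee form is $D^g$-parallel, and then read off the two alternatives. First I would invoke Lemma~\ref{killing}: under the stated hypotheses the $g$-dual $\theta^\sharp$ is a Killing field, so the symmetric part of $D^g\theta$ vanishes. On the other hand, the antisymmetric part of $D^g\theta$ equals $\tfrac12 d\theta$, which is zero by the locally conformally symplectic assumption ($d\theta=0$ in real dimension $4$). Hence $D^g\theta=0$, so $\theta$ is parallel and $|\theta|_g$ is constant. This already yields the dichotomy: if that constant is $0$ then $\theta\equiv0$, whence $dF=\theta\wedge F=0$ and $(M,g,J)$ is almost-K\"ahler; since the second-Chern--Einstein condition is conformally invariant, $g$ is again second-Chern--Einstein, giving alternative (i). Otherwise $\theta$ is nowhere zero and parallel, which is the setting of alternative (ii).

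In the parallel nonzero case I would compute $s^W$. Writing the canonical Weyl connection as $D^W=D^g+A$, with $A$ the standard difference tensor determined by $\theta$, the double trace of its curvature gives a formula of the shape $s^W=s^g+a\,\delta^g\theta+b\,|\theta|^2$ in real dimension $4$; since $\theta$ is parallel we have $\delta^g\theta=0$ and $|\theta|_g$ constant, so only the Riemannian scalar curvature and the constant $|\theta|^2$ survive. The point is then to eliminate $s^g$ using the second-Chern--Einstein equation: combining the relations between the second Chern--Ricci form and the Weyl and Bismut data in Corollary~\ref{cor_2-chern} and Proposition~\ref{prop: second chern first bismut} with the parallelism of $\theta$, the right-hand side should collapse to a nonpositive multiple of $|N|^2$, where $N$ is the Nijenhuis tensor. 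This gives $s^W\le0$ pointwise, with $s^W\equiv0$ if and only if $N\equiv0$, i.e. $J$ is integrable; integrability together with our hypotheses places us in the Hermitian $4$-dimensional situation, where \cite[Theorem 2]{MR1477631} identifies $(M,J)$ as a Hopf surface. I expect this curvature identity — producing the exact coefficient and the sign in front of $|N|^2$ — to be the main obstacle, since it requires a careful decomposition of the Chern torsion and of the Riemann tensor of $g$ in dimension $4$.

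Finally, the topological statement under the assumption that $s^W$ is nowhere zero. Here $\theta^\sharp$ is a nowhere-vanishing parallel vector field, so the Euler characteristic vanishes by Poincar\'e--Hopf, $\chi=0$. For the signature I would use the parallel structure: the constant-length Killing field $\theta^\sharp$ has geodesic orbits and reduces the holonomy of $g$, so that a finite cover of $M$ is a Riemannian product $S^1\times Y^3$ (equivalently, the closed parallel $1$-form $\theta$ realises $M$ as a mapping torus over $S^1$ with $3$-dimensional totally geodesic fibres). Since the signature of $S^1\times Y^3$ vanishes and the signature is multiplicative under finite coverings, $\sigma=0$. Alternatively, one may argue through the almost-complex relations $c_2[M]=\chi$ and $c_1^2[M]=2\chi+3\sigma$: from $\chi=0$ one gets $c_1^2[M]=3\sigma$, and the parallel Lee form forces $c_1^2[M]=0$, whence $\sigma=0$. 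Making the global product, respectively fibration, structure precise is the only technical point in this last step.
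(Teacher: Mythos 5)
Your opening reduction is exactly the paper's: Lemma~\ref{killing} makes $\theta^\sharp$ Killing, the LCS hypothesis $d\theta=0$ kills the antisymmetric part of $D^g\theta$, so $\theta$ is $D^g$-parallel of constant norm, and the dichotomy is read off from whether that constant vanishes. The genuine gap is the step you yourself flag as ``the main obstacle'': the identity $s^W=-2\|N\|^2$ in the non-almost-K\"ahler case. This is not a universal identity obtainable by decomposing the Chern torsion; it is a consequence of the second-Chern--Einstein equation, and the tools you point to do not suffice to extract it. Tracing Corollary~\ref{cor_2-chern} (or Proposition~\ref{prop: second chern first bismut}) only gives $s^H=\mathrm{tr}_F\bigl(R^W(F)\bigr)^{J,+}+2\|\theta\|^2-\|N\|^2$ once $\delta^g\theta=0$, with no control on the Weyl term. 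The paper's mechanism is different and is the heart of the proof: evaluate the comparison~(\ref{diff_ricci+sca}) between $(\rho^\star)^{sym}$ and $(Ric^g)^{J,+}$ on the pair $(\theta^\sharp,\theta^\sharp)$; the second-Chern--Einstein hypothesis enters through Lemma~\ref{expression-ricci}, giving $(Ric^g)^{J,+}(\theta^\sharp,\theta^\sharp)=\frac{s^H}{4}\|\theta\|^2-\frac{1}{4}\|N\|^2\|\theta\|^2-\frac{1}{4}\|\theta\|^4$, while the Killing and LCS hypotheses enter through Corollary~\ref{loc-conf-sym} (itself resting on the long computation of Lemma~\ref{gen_star}), giving $\rho^\star(\theta^\sharp,\theta^\sharp)=0$. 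Combining these yields the pointwise equation $\left(s^H+\|N\|^2-2\|\theta\|^2\right)\|\theta\|^2=0$, hence in case (ii) $s^H=2\|\theta\|^2-\|N\|^2$, so $s^g=\frac{3}{2}\|\theta\|^2-2\|N\|^2$ and, by~(\ref{conformal_sca}), $s^W=-2\|N\|^2$. Without this evaluation trick and the vanishing of $\rho^\star(\theta^\sharp,\theta^\sharp)$, your outline cannot produce the coefficient, the sign, or the equivalence $s^W\equiv 0\Leftrightarrow N\equiv 0$ needed for the Hopf surface alternative.

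The topological step also diverges from the paper, with mixed success. The paper argues: $s^W$ nowhere zero means $N$ is nowhere zero, which forces $5\chi+6\sigma=0$ by~\cite[Lemma 3]{MR1604803}, while the non-trivial constant-length Killing field gives $\chi=0$ by Hopf's theorem~\cite{MR1512316}, whence $\sigma=0$. Your alternative (b) is unjustified: nothing shows that a parallel Lee form forces $c_1^2[M]=0$; the paper obtains the missing relation precisely from the nowhere-vanishing of $N$, which is where the hypothesis ``$s^W$ nowhere zero'' is used --- your argument never uses it. Your alternative (a) overstates the geometry: a compact manifold with a nowhere-zero parallel vector field need not admit a finite cover that is a \emph{Riemannian} product $S^1\times Y^3$ with the circle along that field (flat tori with the parallel field in an irrational direction already fail this). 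What the closed nowhere-zero form $\theta$ does give is a fibration of $M$ over $S^1$ by Tischler's theorem, and closed oriented $4$-manifolds fibering over $S^1$ do have zero signature (for instance because $\sigma(M_{\phi^n})=n\,\sigma(M_\phi)$ under the cyclic covers while the Wang sequence bounds $b_2(M_{\phi^n})$ independently of $n$). Repaired this way your route is sound and in fact stronger than the paper's, since it yields $\chi=\sigma=0$ throughout case (ii) without assuming $s^W$ nowhere zero; but as written, both of your signature arguments have gaps.
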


    In Section~\ref{Sec_3}, we give examples of compact $4$-dimensional second-Chern--Einstein almost-Hermitian manifolds. Some examples are locally conformally symplectic and some satisfy the condition $(D^g\theta)^{sym,J,-}=0$. We also remark that in those examples the second-Chern--Einstein metrics may have positive or zero Chern scalar curvature. It is observed that in the integrable case (in higher dimension) second-Chern--Einstein Hermitian non-K\"ahler metrics with negative Chern scalar curvature are still missing (see~\cite{Angella:2020vt}).
    
    Finally, in Section~\ref{Sec_4}, we study the Bismut--Einstein and the second-Chern--Einstein problems on $4$-dimensional almost-abelian Lie algebras. Using these class of manifolds, we show the existence in the almost-Hermitian case of Bismut--Einstein metrics with $dJdF=0$, while we recall that in the Hermitian non-K\"ahler case, examples of this kind are still missing. We also give a classification of $4$-dimensional unimodular almost-abelian Lie algebras equipped with a left-invariant almost-Hermitian non-Hermitian second-Chern--Einstein metric such that $\theta$ is $D^g$-parallel and non-zero.
    \begin{thm*}[Theorem \ref{thm: classification}]
     Let $\g$ be a $4$-dimensional unimodular almost-abelian Lie algebra equipped with a left-invariant almost-Hermitian non-Hermitian structure $(g,J)$ such that the Lee form $\theta$ is $D^g$-parallel and non-zero. Suppose
that $(g,J)$ is a solution to the second-Chern--Einstein problem. Then $\g$ is isomorphic to one of the following Lie algebras
\begin{enumerate}
\item $\mathcal{A}_{3,6}\oplus \mathcal{A}_1:\,[e_1,e_3]=-e_2,\quad [e_2,e_3]=e_1$.\\
\item $\mathcal{A}_{3,4}\oplus \mathcal{A}_1:\,[e_1,e_3]=e_1,\quad [e_2,e_3]=-e_2.$\\
\end{enumerate}
Both Lie algebras admit compact quotients.

          \end{thm*}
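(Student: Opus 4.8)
The plan is to translate all the hypotheses into linear-algebraic conditions on the endomorphism $A=\operatorname{ad}_{e_4}|_{\mathfrak u}$ encoding the almost-abelian structure, and then to solve the resulting system. First I would fix an adapted orthonormal basis $\{e_1,e_2,e_3,e_4\}$: since the abelian ideal $\mathfrak u$ is $3$-dimensional and $J$ preserves no odd-dimensional subspace, I take a unit $e_4\perp\mathfrak u$, set $e_3=Je_4\in\mathfrak u$, and complete to $\{e_1,e_2\}$ with $Je_1=e_2$ inside the $J$-invariant complement. Writing $[e_4,e_j]=Ae_j$ and $[e_i,e_j]=0$ for $i,j\le3$, one gets $F=e^{12}+e^{34}$; computing $dF$ and comparing with $dF=\theta\wedge F$ gives $\theta=-a_{23}e^1+a_{13}e^2-(a_{11}+a_{22})e^4$, while unimodularity reads $\operatorname{tr}A=0$.

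Next I would record the Levi--Civita connection of $g$, which for almost-abelian algebras is completely explicit via the Koszul formula: $D_{e_4}e_4=0$, $D_{e_i}e_4=-A^{\mathrm{sym}}e_i$, $D_{e_4}e_j=A^{\mathrm{skew}}e_j$ and $D_{e_i}e_j=(A^{\mathrm{sym}})_{ij}\,e_4$ for $i,j\le3$. Imposing that $\theta^\sharp=u+\lambda e_4$, with $u=-a_{23}e_1+a_{13}e_2$ and $\lambda=a_{33}$, be $D^g$-parallel yields the three conditions $A^{\mathrm{skew}}u=0$, $\lambda A^{\mathrm{sym}}=0$ and $A^{\mathrm{sym}}u=0$. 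Since $A^{\mathrm{sym}}=0$ already forces $\lambda=a_{33}=0$, the middle equation collapses to $\lambda=0$; hence $a_{33}=0$, $a_{11}=-a_{22}$, $\theta^\sharp=u\in\langle e_1,e_2\rangle$ and $Au=0$. Rotating within the $J$-invariant plane $\langle e_1,e_2\rangle$ (an operation preserving the entire almost-Hermitian structure), I normalize $u\parallel e_1$; this makes the first column and the whole diagonal of $A$ vanish, leaving only the three parameters $a_{12},a_{23},a_{32}$ with $a_{23}\neq0$.

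With $A$ in this three-parameter form I would compute the Chern connection $\nabla$ (the unique connection preserving $(g,J)$ with $J$-anti-invariant torsion), then the second Chern--Ricci form $r=R^\nabla(F)$, and impose $r=\tfrac12(\operatorname{tr}_F r)\,F$; simultaneously I would evaluate the Nijenhuis tensor and keep the non-Hermitian constraint $N\neq0$. This is the step I expect to be the main obstacle: the curvature computation must be organized so that the Einstein system stays manageable, and then one must run the case analysis. I anticipate that the equations force $a_{12}=0$ and fix the product $a_{23}a_{32}$ up to scale, so that the sign of $a_{23}a_{32}$ governs the two outcomes: imaginary eigenvalues of $A$ restricted to $\langle e_2,e_3\rangle$ give the elliptic case $\mathcal A_{3,6}\oplus\mathcal A_1$, and real eigenvalues give the hyperbolic case $\mathcal A_{3,4}\oplus\mathcal A_1$, the hypothesis $N\neq0$ discarding the integrable and degenerate loci.

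Finally, recognizing the two surviving conjugacy classes of $A$ as the stated Lie algebras is a normal-form argument, using that $\mathbb R\ltimes_A\mathbb R^3$ and $\mathbb R\ltimes_{A'}\mathbb R^3$ are isomorphic precisely when $A'$ is conjugate to a nonzero multiple of $A$. For the compact quotients I would invoke the standard lattice criterion for almost-abelian groups: $\mathbb R\ltimes_A\mathbb R^3$ admits a lattice as soon as some $e^{t_0A}$, $t_0\neq0$, is conjugate to an integral matrix. This holds in the elliptic case by choosing $t_0$ realizing a crystallographic rotation angle, and in the hyperbolic case by arranging $e^{t_0A}$ to lie in $SL(2,\mathbb Z)$ on the $\langle e_2,e_3\rangle$-block.
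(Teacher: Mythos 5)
Your setup is sound: the adapted basis, the Koszul formulas for $D^g$ on an almost-abelian algebra, the identification $\theta=-a_{23}e^1+a_{13}e^2-(a_{11}+a_{22})e^4$, and the three conditions $A^{\mathrm{skew}}u=0$, $\lambda A^{\mathrm{sym}}=0$, $A^{\mathrm{sym}}u=0$ are all correct, and the rotation normalization $u\parallel e_1$ is legitimate (it is an orthogonal change of adapted basis commuting with $J$) — indeed it collapses into one case the two-case analysis ($v_1\neq 0$ versus $v_1=0$) that the paper runs with its block data $(a,b,v,A)$. One slip in the execution: your own conditions $A^{\mathrm{sym}}u=A^{\mathrm{skew}}u=0$ give \emph{both} $Au=0$ and $A^{\mathrm{T}}u=0$, so after normalizing $u\parallel e_1$ the first \emph{row} of $A$ vanishes as well as the first column; hence $a_{12}=0$ is already forced by the parallel-Lee-form hypothesis, and only the two parameters $a_{23}\neq 0$ and $a_{32}$ survive, not three. (This turns out to be harmless, since the Einstein equations would also kill $a_{12}$, but as written your parameter count is wrong.)

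The genuine gap is that the heart of the theorem — deriving the second-Chern--Einstein equations in terms of the $a_{ij}$ and solving them — is precisely the step you defer and only ``anticipate.'' Everything the classification asserts hinges on that system: that on the two-parameter family it reduces to a single homogeneous quadratic in $(a_{32},a_{23})$ (in the paper's variables, $2b_2^2-b_2v_2-2v_2^2=0$, so $b_2=v_2\bigl(1\pm\sqrt{17}\bigr)/4$), that consequently $b_2v_2\neq 0$ with both signs realized, that the non-Hermitian hypothesis is compatible, and that no further isomorphism classes occur. Without producing these equations, the statement ``the sign of $a_{23}a_{32}$ governs the two outcomes'' is a conjecture, not a proof. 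Note also that the paper does \emph{not} compute $R^\nabla$ from scratch, which is why its system stays manageable: it combines Proposition~\ref{prop: second chern first bismut}, expressing $r$ as $\left(Ric^B\right)^{J,+}$ plus a zeroth-order Nijenhuis correction, with the closed formula (\ref{eq: first Bismut Ricci form A-A}) for the Bismut--Ricci form of an almost-abelian algebra and the explicit Nijenhuis factor (\ref{eq: Nijenhuis factor A-A}). If you insist on computing the Chern curvature directly you must still organize it (e.g.\ through such a reduction) and then actually run the case analysis; as it stands, the proposal proves the linear-algebraic reductions and the endgame (Jordan forms, lattices, both of which are fine in outline) but omits the computation that makes the theorem true.
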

    Here we use the same notation of Lie algebras as~\cite{MR404362} (for a classification of locally conformally symplectic Lie algebras see~\cite{MR3763412,MR4088745}).

\section{Preliminaries}\label{Prel}



    In all the following $(M,g,J)$ will be an almost-Hermitian manifold of real dimension 4. We denote by $D^g$ the Levi-Civita connection associated to the Riemannian metric $g$ and by $R^\nabla,R^W,R^g,R^B$ the curvature tensors of $\nabla,D^W,D^g,\nabla^B$ respectively, where we use the following convention $R^\nabla_{X,Y}=\nabla_{[X,Y]}-[\nabla_X,\nabla_Y],$ etc. Moreover, given a $2$-tensor $\psi$, we denote by $\psi^{J,+}$ its $J$-invariant part, $\psi^{J,-}$ its $J$-anti-invariant part, $\psi^{sym}$ its $g$-symmetric part and $\psi^{anti-sym}$ its $g$-anti-symmetric part. Moreover, a $2$-form $\phi$ can be decomposed into a $g$-orthogonal sum $\phi=\phi^++\phi^-$, where $\phi^+$ is self dual i.e. $\ast_g\phi^+=\phi^+$ and $\phi^-$ is anti-self dual $\ast_g\phi^-=-\phi^-$ under the action of the Riemannian Hodge operator $\ast_g.$
    
    It follows from~\cite{MR1456265} that the Chern connection $\nabla$ is related to the Levi--Civita connection $D^g$ by
    \begin{equation}\label{chern_con}
        \nabla_XY=D^g_XY-\frac{1}{2}\theta(JX)JY-\frac{1}{2}\theta(Y)X+\frac{1}{2}g(X,Y)\theta^\sharp+g(X,N(\cdot,Y)),
    \end{equation}
    where $\sharp$ is the isomorphism induced by the metric g between $1$-forms and vector fields. Moreover, the canonical Weyl connection $D^W$ is related to the Levi--Civita connection $D^g$ by
    \begin{equation}\label{weyl_con}
        D^W_XY=D^g_XY-\frac{1}{2}\theta(X)Y-\frac{1}{2}\theta(Y)X+\frac{1}{2}g(X,Y)\theta^\sharp,
    \end{equation}
    
    We also remark that (for more details see~\cite{MR1727302})
    \begin{equation}\label{int_weyl}
        g(\left(D^W_ZJ\right)X,JY)=-2g(N(X,Y),Z),
    \end{equation}
    where $N$ is the Nijenhuis tensor defined by
    $$4N(X,Y)=[JX,JY]-[X,Y]-J[JX,Y]-J[X,JY].$$ 
    Moreover, on any $4$-dimensional almost-Hermitian manifold, Gauduchon proved~\cite[Proposition 1]{MR1456265} that
    \begin{equation}\label{cycle_n}
        g(N(X,Y),Z)+g(N(Y,Z),X)+g(N(Z,X),Y)=0,
    \end{equation}
    for any vector fields $X,Y,Z.$

    The almost-complex structure $J$ is integrable if and only if $N$ vanishes~\cite{MR88770}. Hence, $D^W$ preserves $J$ if and only if $J$ is integrable.
    

    We will now list some of the most relevant curvatures that can be obtained tracing the curvature tensors $R^\nabla,R^W,R^g,R^B$ together with their relations. To define them we will consider a $J$-adapted $g$-orthonormal frame of the tangent bundle $\{e_1,e_2=Je_1,e_3,e_4=Je_3\}$. 

    The first Chern--Ricci form $\rho^\nabla$ (called also the Hermitian Ricci form) of the Chern connection $\nabla$ is defined by 
    \begin{equation*}
        \rho^\nabla(X,Y)=\frac{1}{2}\sum_{i=1}^4g(R^\nabla_{X,Y}e_i,Je_i),
    \end{equation*}
    similarly, the Bismut--Ricci form $Ric^B$ is defined by
    \begin{equation*}
        Ric^B(X,Y)=\frac{1}{2}\sum_{i=1}^4g(R^B_{X,Y}e_i,Je_i).
    \end{equation*}
    These $2$-forms $\rho^\nabla$ and $Ric^B$ are closed and they are representatives of the first Chern class $2\pi c_1(TM,J)$ in De Rham cohomology. Indeed, they differ by the exact factor $dJ\theta$, i.e.
    \begin{equation}\label{eq: ricci chern and bismut}
        Ric^B = \rho^\nabla + dJ\theta,
    \end{equation}
    see~\cite[Equation (2.7.6)]{MR1456265} (see also~\cite{MR1836272,Barbaro:2021tt} and the references therein).
    We also remark that these forms are not necessarily $J$-invariant.
   
    We denote by $r$ the second Chern--Ricci form of $\nabla$ defined by
    \begin{equation*}
        r(X,Y)=\frac{1}{2}\sum_{i=1}^4g(R^\nabla_{e_i,Je_i}X,Y),
    \end{equation*}
    or equivalently, $$r=R^\nabla(F).$$ $r$ is a $J$-invariant $2$-form but not closed in general. 

    Similarly, $R^W(F)$ is given by the formula $$R^W(F)=\frac{1}{2}\sum_{i=1}^4g(R^{W}_{e_i,Je_i}X,Y).$$ 
    The tensor $R^W(F)$ is a $2$-form and it is not $J$-invariant in general. However, when $J$ is integrable, $D^W$ preserves $J$ and so $R^W(F)=(R^W(F))^{J,+}$ is $J$-invariant. The Weyl Ricci tensor $Ric^W$ is defined in~\cite{MR1477631} as
    $$Ric^W(X,Y)=\sum_{i=1}^4g(R^W_{e_i,X}e_i,Y).$$
    Note that the tensor $Ric^W$ is symmetric (this is only true in dimension $4$). On the other hand,  the tensor $\widetilde{Ric}^W$, defined as (see for example~\cite{MR1199072}) $$\widetilde{Ric}^W(X,Y)=\sum_{i=1}^4g(R^W_{X,e_i}Y,e_i),$$
    is not symmetric in general. Its anti-symmetric part is $d\theta$ while its symmetric part is $Ric^W$, that is~(see\cite{MR1975033}) 
    $$\widetilde{Ric}^W=Ric^W+d\theta.$$

    We define the Riemannian Ricci tensor $Ric^g$ as $$Ric^g(X,Y)=\sum_{i=1}^4g(R^g_{e_i,X}e_i,Y),$$ 
    and the $\star$-Ricci tensor $\rho^\star$ as $$\rho^\star(X,Y)=R^g(F)(X,JY)=\frac{1}{2}\sum_{i=1}^4g(R^g_{e_i,Je_i}X,JY).$$
    From the definition, it follows that $\rho^\star(X,Y)=\rho^\star(JY,JX)$ so $\rho^\star$ is symmetric if and only it is $J$-invariant.
    
    We define the Hermitian scalar curvature $s^H$ as $$s^H=\sum_{i=1}^4r(e_i,Je_i),$$ the Riemannian scalar curvature $s^g$ as $s^g=\sum_{i=1}^4Ric^g(e_i,e_i),$ the conformal scalar curvature $s^W$ as $s^W=\sum_{i=1}^4Ric^W(e_i,e_i),$ and the $\star$-scalar curvature $s^\star$ as $s^\star=\sum_{i=1}^4\rho^\star(e_i,e_i).$

    The conformal scalar curvature $s^W$ is then related to the Riemannian scalar curvature $s^g$ by~(see for example~\cite{MR1975033})
    \begin{equation}\label{conformal_sca}
        s^W=s^g-3\,\delta^g\theta-\frac{3}{2}\|\theta\|^2,
    \end{equation}
    where $\|\theta\|^2=g(\theta,\theta).$
    Furthermore, for any almost-Hermitian manifold of dimension $4$, we have that~\cite{MR626479}
    \begin{equation}\label{diff_ricci}
        \left(\rho^\star\right)^{sym}-\left(Ric^g\right)^{J,+}=\frac{s^\star-s^g}{4}g.
    \end{equation}
    On the other hand, $s^\star$ is related to the Riemannian scalar curvature $s^g$ by (see~\cite{MR945613, MR2604242}, while in the integrable case~\cite{MR696036})
    \begin{equation}\label{diff_sca}
    s^\star-s^g=-2\delta^g\theta-\|\theta\|^2+2\|N\|^2.
    \end{equation}

\section{Second-Chern--Einstein metrics}\label{Sec_2}
    Let $(M,g,J)$ be an almost-Hermitian manifold of real dimension $4$. We recall that with our notations the metric $g$ is said to be second-Chern--Einstein if  
    $$r=\frac{s^H}{4} F,$$ 
    (we note that $s^H$ is not necessarily constant here). 

\subsection{Relation between second Chern--Ricci and Weyl--Ricci tensors}     
    Under a conformal change $\tilde{g}=e^{2f}g$, the conformal variation of the second Chern--Ricci form $r$ of $(g,J)$ is given by ~(\cite{MR4184828}, in the Hermitian case see~\cite{MR742896})
    \begin{equation}\label{conf-change}
        \tilde{r}=r+\left(\Delta^gf+g(\theta,df)\right)F,
    \end{equation}
    where $\tilde{r}$ is the second Ricci form of $(\tilde{g},J)$ and $\Delta^g$ is the Riemannian Laplacian of $g$. The condition of being second-Chern--Einstein, in the almost-Hermitian setting, is then conformally invariant. Hence, we investigate the relation between the curvature $R^\nabla$ of the Chern connection $\nabla$ and the curvature $R^W$ of the Weyl connection $D^W$.

    \begin{prop}\label{relation-chern-weyl}
        The curvatures $R^\nabla$ and $R^W$ are related by
        \begin{align*}
            R^\nabla_{X,Y}Z = R^W_{X,Y}Z & - \frac{1}{2}(dJ\theta)(X,Y)JZ-\frac{1}{2}(d\theta)(X,Y)Z \\
            & -\frac{1}{2}\left(D^W_X\left(D^W_YJ\right)-D^W_Y\left(D^W_XJ\right)-D^W_{[X,Y]}J \right)JZ\\
            & +\frac{1}{4}\left(\left(D^W_XJ\right)\left(D^W_YJ\right) -\left(D^W_YJ\right)\left(D^W_XJ\right)   \right)Z.
        \end{align*}
    \end{prop}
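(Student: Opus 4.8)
The plan is to realize the Chern connection as a deformation of the canonical Weyl connection and to propagate the difference tensor through the curvature. Subtracting \eqref{weyl_con} from \eqref{chern_con} gives $\nabla=D^W+B$ with
\[
B_XY=\nabla_XY-D^W_XY=\tfrac12\theta(X)Y-\tfrac12\theta(JX)JY+g(X,N(\cdot,Y)).
\]
Because $D^W$ is torsion-free, I would first establish the general curvature comparison, in the paper's convention $R_{X,Y}=\nabla_{[X,Y]}-[\nabla_X,\nabla_Y]$,
\[
R^\nabla_{X,Y}Z=R^W_{X,Y}Z-(D^W_XB)(Y,Z)+(D^W_YB)(X,Z)-[B_X,B_Y]Z,
\]
deriving it directly and observing that the torsion-free identity $D^W_XY-D^W_YX=[X,Y]$ is exactly what makes the terms $B(D^W_XY,Z)$, $B(D^W_YX,Z)$ and $B([X,Y],Z)$ cancel. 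The whole problem then reduces to matching this right-hand side with the three correction lines in the statement.

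The organizing device is to split $B_X$ into its part $B^+_X$ commuting with $J$ and its part $B^-_X$ anti-commuting with $J$. Expanding $0=(\nabla_XJ)Y=(D^W_XJ)Y+B_X(JY)-JB_XY$ yields $(D^W_XJ)Y=JB_XY-B_X(JY)$, and solving this for the anti-commuting component forces $B^-_X=\tfrac12(D^W_XJ)J$. The two $\theta$-terms of $B$ manifestly commute with $J$, whereas the Nijenhuis term $g(X,N(\cdot,Y))$ anti-commutes with $J$ since $N$ is $J$-anti-linear in each slot; hence $B^+_XY=\tfrac12\theta(X)Y-\tfrac12\theta(JX)JY$ and the Nijenhuis term is precisely $B^-_X=\tfrac12(D^W_XJ)J$, consistently with \eqref{int_weyl}.

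Substituting this splitting, the anti-commuting part reproduces the last two lines of the statement: the antisymmetrised derivative $-(D^W_XB^-)(Y,\cdot)+(D^W_YB^-)(X,\cdot)$ produces $-\tfrac12\bigl(D^W_X(D^W_YJ)-D^W_Y(D^W_XJ)-D^W_{[X,Y]}J\bigr)JZ$ plus a residual $\tfrac12\bigl((D^W_XJ)(D^W_YJ)-(D^W_YJ)(D^W_XJ)\bigr)Z$, and adding the quadratic contribution $-[B^-_X,B^-_Y]Z=-\tfrac14\bigl((D^W_XJ)(D^W_YJ)-(D^W_YJ)(D^W_XJ)\bigr)Z$ leaves exactly the $+\tfrac14(\cdots)Z$ term. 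The commuting part reproduces the first two lines: torsion-freeness gives $d\theta(X,Y)=(D^W_X\theta)(Y)-(D^W_Y\theta)(X)$ and similarly for $J\theta$, so the antisymmetrised derivative of $B^+$ collapses to $-\tfrac12(d\theta)(X,Y)Z-\tfrac12(dJ\theta)(X,Y)JZ$ after fixing the sign convention for $J\theta$.

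The step I expect to demand the most care is the vanishing of the leftover mixed terms, which are linear in both $\theta$ and $D^WJ$. Differentiating $B^+$ produces a piece $\tfrac12\bigl(\theta(JY)(D^W_XJ)-\theta(JX)(D^W_YJ)\bigr)Z$, while the cross commutators contribute $-[B^+_X,B^-_Y]Z-[B^-_X,B^+_Y]Z$; using that $B^-$ anti-commutes with $J$ one finds $[B^+_X,B^-_Y]=-\tfrac12\theta(JX)(D^W_YJ)$ and symmetrically, and these cancel the differentiated piece identically. One also checks $[B^+_X,B^+_Y]=0$, so no pure-$\theta$ quadratic term survives. The remaining effort is sign-tracking against the curvature convention and the convention for $J\theta$, together with (if one prefers to keep $N$ in place of $D^WJ$) an appeal to the cyclic identity \eqref{cycle_n}; this is bookkeeping rather than a conceptual hurdle.
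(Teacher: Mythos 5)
Your proof is correct and is essentially the paper's: both start from $\nabla=D^W+B$ with $B_XY=\tfrac12\theta(X)Y-\tfrac12\theta(JX)JY+\tfrac12(D^W_XJ)JY$ (the paper substitutes the Nijenhuis term via~(\ref{int_weyl}), while you recover the same identification from $\nabla J=0$ and the uniqueness of the $J$-commuting/anti-commuting splitting), and then expand the curvature of the deformed connection. The paper compresses that expansion into the single sentence ``Then, we compute the curvature''; your $B^{\pm}$ decomposition, the cancellation of the mixed $\theta$--$(D^WJ)$ terms against the cross-commutators, and the sign bookkeeping for $J\theta$ are exactly the omitted computation, carried out with the correct conventions.
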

    \begin{proof}
        From~(\ref{weyl_con}) and~(\ref{chern_con}), we obtain that
        $$\nabla_XY=D^W_XY+\frac{1}{2}\theta(X)Y-\frac{1}{2}\theta(JX)JY+g(X,N(\cdot,Y)).$$
        From~(\ref{int_weyl}), we get that
        $$\nabla_XY=D^W_XY+\frac{1}{2}\theta(X)Y-\frac{1}{2}\theta(JX)JY+\frac{1}{2}(D^W_XJ)JY.$$
        Then, we compute the curvature.
    \end{proof}
    When $J$ is integrable, the relation reduces to (see~\cite{MR1477631}) $$R^\nabla=R^W-\frac{1}{2}(dJ\theta)\otimes J-\frac{1}{2}(d\theta)\otimes Id.$$

    We remark that the part $\left(R^W_{X,Y}\right)^{J,-}$ of $R^W_{X,Y}$ that anticommutes with $J$ is given precisely by (see for example~~\cite[Equation (2.15)]{MR1727302})
    \begin{equation}\label{weyl_anti}
        \left(R^W_{X,Y}\right)^{J,-}=\frac{1}{2}\left(D^W_X\left(D^W_YJ\right)-D^W_Y\left(D^W_XJ\right)-D^W_{[X,Y]}J \right).
    \end{equation}
    We obtain then the following
    \begin{cor}\label{cor1}
        Let $(M,g,J)$ be an almost-Hermitian $4$-dimensional manifold. Then,
        \begin{align*}
            R^\nabla_{X,Y}Z=\left(R^W_{X,Y}\right)^{J,+}Z & -\frac{1}{2}(dJ\theta)(X,Y)JZ-\frac{1}{2}(d\theta)(X,Y)Z\\
            & +\frac{1}{4}\left(\left(D^W_XJ\right)\left(D^W_YJ\right) -\left(D^W_YJ\right)\left(D^W_XJ\right)   \right)Z,
        \end{align*}
        where $\left(R^W_{X,Y}\right)^{J,+}$ is the part of $R^W_{X,Y}$ that commutes with $J.$
    \end{cor}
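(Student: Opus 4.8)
The plan is to read the Corollary off from Proposition~\ref{relation-chern-weyl} by inserting the identity~(\ref{weyl_anti}). The point to notice first is that the three terms of the Corollary carrying $dJ\theta$, $d\theta$ and the commutator $\tfrac14\big((D^W_XJ)(D^W_YJ)-(D^W_YJ)(D^W_XJ)\big)$ are literally the corresponding terms of Proposition~\ref{relation-chern-weyl}, so they need no argument. Thus the whole statement collapses to the single endomorphism identity
$$R^W_{X,Y}Z-\tfrac12\Big(D^W_X(D^W_YJ)-D^W_Y(D^W_XJ)-D^W_{[X,Y]}J\Big)JZ=\big(R^W_{X,Y}\big)^{J,+}Z,$$
and everything is concentrated there.

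To establish it I would substitute~(\ref{weyl_anti}) to replace the second-order covariant-derivative bracket by the $J$-anti-invariant part of the Weyl curvature; equivalently, the Ricci identity writes that bracket as $J R^W_{X,Y}-R^W_{X,Y}J$. Applying the resulting operator to $JZ$, distributing, and using $J^2=-\mathrm{id}$, the piece $\tfrac12 R^W_{X,Y}(J^2Z)=-\tfrac12 R^W_{X,Y}Z$ combines with the leading $R^W_{X,Y}Z$ to leave $\tfrac12 R^W_{X,Y}Z$, while the cross term contributes $-\tfrac12 JR^W_{X,Y}(JZ)$. Hence the left-hand side equals $\tfrac12\big(R^W_{X,Y}-JR^W_{X,Y}J\big)Z$, which is precisely the image of $R^W_{X,Y}$ under the projector $A\mapsto\tfrac12(A-JAJ)$ onto the endomorphisms commuting with $J$, i.e. $\big(R^W_{X,Y}\big)^{J,+}Z$. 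This simultaneously justifies the closing remark, since that projector is exactly what extracts the part of $R^W_{X,Y}$ commuting with $J$.

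The only delicate point — and the one spot where a sign slip would be fatal — is moving $J$ across the curvature when acting on $JZ$: one must check that the two occurrences of $R^W_{X,Y}Z$ add with the right coefficients (so that the anti-invariant contribution cancels rather than doubles) and that the surviving cross term assembles into $-JR^W_{X,Y}J$ with the correct sign. Once this bookkeeping is verified, recombining with the three untouched terms of Proposition~\ref{relation-chern-weyl} yields the stated formula verbatim.
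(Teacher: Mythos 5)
Your proposal is correct and coincides with the paper's (implicit) derivation: Corollary~\ref{cor1} is exactly Proposition~\ref{relation-chern-weyl} after the second-derivative bracket is replaced, via the Ricci identity adapted to the paper's sign convention $R^W_{X,Y}=D^W_{[X,Y]}-[D^W_X,D^W_Y]$, by $JR^W_{X,Y}-R^W_{X,Y}J$, and the $J^2=-\mathrm{id}$ bookkeeping you perform then yields $\frac{1}{2}\left(R^W_{X,Y}-JR^W_{X,Y}J\right)Z=\left(R^W_{X,Y}\right)^{J,+}Z$, the projection onto the part commuting with $J$. Your sign analysis of the cancellation (the anti-invariant contribution cancelling rather than doubling) is exactly the delicate point, and it checks out.
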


    It follows from Corollary~\ref{cor1} that we can relate the second Chern--Ricci curvature with $R^W(F)$ extending the relation in the integrable case (see~\cite[Theorem 1]{MR1477631})
    \begin{cor}\label{cor_2-chern}
        Let $(M,g,J)$ be an almost-Hermitian $4$-dimensional manifold. Then,
        \begin{equation*}
            r=(R^W(F))^{J,+}+\frac{1}{2}(\delta^g\theta+\|\theta\|^2)F-\frac{1}{4}\|N\|^2F.
        \end{equation*}
        In particular, $g$ is second-Chern--Einstein if and only if $(R^W(F))^{J,+}$ is a multiple of $F.$
    \end{cor}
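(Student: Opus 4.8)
The plan is to compute the second Chern--Ricci form straight from its definition $r=R^\nabla(F)$ by contracting the pointwise identity of Corollary~\ref{cor1} over a $J$-adapted $g$-orthonormal frame $\{e_i\}$; that is, to evaluate $r(X,Y)=\tfrac12\sum_i g(R^\nabla_{e_i,Je_i}X,Y)$ summand by summand. Four contributions appear, one from each term of Corollary~\ref{cor1}, and the whole point is that the last three collapse to scalar multiples of $F$, so only the first survives as a genuinely new tensor. First I would treat the curvature term $(R^W_{e_i,Je_i})^{J,+}$, where I claim the purely algebraic identity $\tfrac12\sum_i g\big((R^W_{e_i,Je_i})^{J,+}X,Y\big)=(R^W(F))^{J,+}(X,Y)$: contracting the $J$-commuting part of the endomorphism and taking the $J$-invariant part of the contracted $2$-form are the same operation. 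This follows from $(A)^{J,+}=\tfrac12(A-JAJ)$ together with $g(J\cdot,J\cdot)=g(\cdot,\cdot)$ and $J^2=-\id$, and uses only that $R^W(F)$ is a $2$-form.

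Next come the two linear terms. The $d\theta$-term contributes $-\tfrac12\big(\tfrac12\sum_i(d\theta)(e_i,Je_i)\big)g(X,Y)$, a symmetric tensor proportional to $g$; since $r$ is antisymmetric this must drop out, and indeed $\tfrac12\sum_i(d\theta)(e_i,Je_i)=\langle d\theta,F\rangle$ vanishes because $d\theta\wedge F=d(\theta\wedge F)-\theta\wedge dF=d(dF)=0$ in dimension $4$ (using $dF=\theta\wedge F$ and $\ast_g F=F$). The $dJ\theta$-term contributes $-\tfrac12\langle dJ\theta,F\rangle F$, and a pointwise computation of this contraction—rewriting $\langle dJ\theta,F\rangle=\sum_i(D^g_{e_i}(J\theta))(Je_i)$ and using the standard expression of $\sum_i(D^g_{e_i}J)e_i$ through the Lee form together with $g(\theta^\sharp,J\theta^\sharp)=0$—yields $\langle dJ\theta,F\rangle=-(\delta^g\theta+\|\theta\|^2)$, hence the contribution $\tfrac12(\delta^g\theta+\|\theta\|^2)F$.

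The remaining term, $\tfrac18\sum_i g\big([D^W_{e_i}J,D^W_{Je_i}J]X,Y\big)$, is where the main work lies. Each $D^W_ZJ$ anticommutes with $J$, so every commutator $[D^W_{e_i}J,D^W_{Je_i}J]$ commutes with $J$ and, being $g$-skew as one checks directly from~(\ref{int_weyl}), defines a $J$-invariant $2$-form. Using~(\ref{int_weyl}) to express $D^WJ$ entirely through the Nijenhuis tensor $N$, and then the cyclic identity~(\ref{cycle_n}) to collapse the resulting quadratic expression, I expect this term to reduce to $-\tfrac14\|N\|^2F$. This is the principal obstacle: an a priori arbitrary $J$-invariant $2$-form in dimension $4$ need not be proportional to $F$, so one must show its primitive part vanishes and isolate the correct coefficient $-\tfrac14$. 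This is precisely the bookkeeping that the dimension-$4$ structure of $N$ and the cyclic identity~(\ref{cycle_n}) are there to control.

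Assembling the four pieces gives $r=(R^W(F))^{J,+}+\tfrac12(\delta^g\theta+\|\theta\|^2)F-\tfrac14\|N\|^2F$, as claimed. The final assertion is then immediate: since the last two summands are scalar multiples of $F$, we have $r=(R^W(F))^{J,+}+cF$ with $c=\tfrac12(\delta^g\theta+\|\theta\|^2)-\tfrac14\|N\|^2$, so $r$ is proportional to $F$—i.e.\ $g$ is second-Chern--Einstein—if and only if $(R^W(F))^{J,+}$ is proportional to $F$, the proportionality constant being automatically $s^H/4$ upon taking the $F$-trace.
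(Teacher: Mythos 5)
Your overall route is exactly the paper's: contract the identity of Corollary~\ref{cor1} over a $J$-adapted orthonormal frame and examine the four resulting terms. Your treatment of three of them is correct and in fact more detailed than the paper's own proof: the algebraic identity $\tfrac12\sum_i g\bigl((R^W_{e_i,Je_i})^{J,+}X,Y\bigr)=(R^W(F))^{J,+}(X,Y)$ is right, and your derivations of $g(d\theta,F)=0$ and $g(dJ\theta,F)=-\delta^g\theta-\|\theta\|^2$ supply arguments for facts the paper merely asserts (there is a harmless sign slip: $d(\theta\wedge F)=d\theta\wedge F-\theta\wedge dF$, so $d\theta\wedge F=d(\theta\wedge F)+\theta\wedge dF$; the conclusion stands since $\theta\wedge dF=\theta\wedge\theta\wedge F=0$).

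The genuine gap is the fourth term, which is where all the content of the corollary sits, and which you do not prove: ``I expect this term to reduce to $-\tfrac14\|N\|^2F$'' is an accurate diagnosis of what must be shown, not a demonstration. Showing that this $J$-invariant $2$-form has vanishing primitive part \emph{and} has coefficient exactly $-\tfrac14\|N\|^2$ is the bulk of the paper's proof; without it, neither the displayed identity nor the ``in particular'' equivalence is established. Moreover, the tool you name for this step, the cyclic identity~(\ref{cycle_n}), is not what closes it --- the paper never invokes~(\ref{cycle_n}) in this proof. What actually works is the following. From~(\ref{int_weyl}) and the algebraic properties $N(JX,Y)=N(X,JY)=-JN(X,Y)$ one gets $D^W_{Je_i}J=-J\,D^W_{e_i}J$, so $\sum_i\bigl[D^W_{e_i}J,D^W_{Je_i}J\bigr]=2\sum_i J\bigl(D^W_{e_i}J\bigr)^2$; applying~(\ref{int_weyl}) twice more to convert $D^WJ$ into $N$ turns the $g$-pairing of this sum with $(X,Y)$ into $-8\sum_j g\bigl(N(JX,e_j),N(Y,e_j)\bigr)$. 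The genuinely $4$-dimensional input is then the pointwise structure of $N$: antisymmetry together with $J$-anti-invariance force every value $N(e_k,e_l)$ to equal $0$, $\pm N(e_1,e_3)$ or $\pm JN(e_1,e_3)$, whence $\|N\|^2=8\|N(e_1,e_3)\|^2$; consequently, in $\sum_j g\bigl(N(e_k,e_j),N(e_l,e_j)\bigr)$ the off-diagonal terms $k\neq l$ vanish (each summand either contains a zero factor or pairs as $g(A,\pm JA)=0$) while each diagonal term equals $\tfrac14\|N\|^2$, giving $\sum_j g\bigl(N(JX,e_j),N(Y,e_j)\bigr)=\tfrac14\|N\|^2 g(JX,Y)$ and hence, after the overall factor $\tfrac18$, exactly $-\tfrac14\|N\|^2F(X,Y)$. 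Some such computation is indispensable, and it is missing from your argument.
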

    \begin{proof}
        We first have that $g(dJ\theta,F)=-\delta^g\theta-\|\theta\|^2,$ and $g(d\theta,F)=0.$ Moreover, using~(\ref{int_weyl}), we see that 
        \begin{align*}
            \sum_{i=1}^4g\left(\left(D^W_{e_i}J\right)\left(D^W_{Je_i}J\right) -\left(D^W_{Je_i}J\right)\left(D^W_{e_i}J\right)Z,V\right)&=-2\sum_{i=1}^4g\left(\left(D^W_{e_i}J\right)\left(D^W_{e_i}J\right)Z,JV\right),\\
            &=2\sum_{i=1}^4g\left(\left(D^W_{e_i}J\right)Z,\left(D^W_{e_i}J\right)JV\right),\\
            &=4\sum_{i=1}^4g\left(N\left(Z,\left(D^W_{e_i}J\right)JV  \right),Je_i\right),\\
            &=4\sum_{i,j=1}^4g(N\left(Z,e_j  \right),Je_i) g\left(\left(D^W_{e_i}J\right)JV,e_j\right),\\
            &=8\sum_{i,j=1}^4g(N\left(Z,e_j  \right),Je_i) g\left(N(JV,e_j),Je_i\right),\\
            &=-8\sum_{i,j=1}^4g(N\left(JZ,e_j  \right),e_i) g\left(N(V,e_j),e_i\right).\\
        \end{align*}
        Now, we use the crucial fact that we are in dimension 4 with $\|N\|^2=8\|N(e_1,e_3)\|^2$ 
        \begin{align*}
            \sum_{i,j=1}^4g(N\left(JZ,e_j  \right),e_i) g\left(N(V,e_j),e_i\right)&=\sum_{i,j,k,l=1}^4g(JZ,e_k)g(V,e_l)g(N\left(e_k,e_j  \right),e_i) g\left(N(e_l,e_j),e_i\right),\\
            &=\sum_{j,k,l=1}^4g(JZ,e_k)g(V,e_l)g(N\left(e_k,e_j  \right),N(e_l,e_j)),\\
            &=\sum_{j,k=1}^4g(JZ,e_k)g(V,e_k)g(N\left(e_k,e_j  \right),N(e_k,e_j)),\\
            &=\frac{1}{4}\|N\|^2g(JZ,V).
        \end{align*}
        The result then follows.
    \end{proof}

    The canonical Weyl connection $D^W$ is said to be Einstein--Weyl if $Ric^W$ (or equivalently the symmetric part of $\widetilde{Ric}^W$) is proportional to the metric $g.$ When $J$ is integrable, the metric $g$ is second-Chern--Einstein if and only if $D^W$ is Einstein--Weyl~\cite[Theorem 1]{MR1477631}. This is due to the fact that $Ric^W(JX,Y)=R^W(F)(X,Y)$ because $R^W_{X,Y}$ commutes with $J$.
    
    From~(\ref{weyl_con}) we also obtain
    \begin{cor}\label{cor_ric-weyl}
        Let $(M,g,J)$ be an almost-Hermitian $4$-dimensional manifold. Then,
        \begin{equation*}
            R^W(F)^{J,+}_{\cdot,J\cdot}=(\rho^\star)^{sym}+\left(D^g\theta\right)^{sym,J,+}-\frac{1}{4}\|\theta\|^2g+\frac{1}{2}\left(\theta\otimes\theta\right)^{J,+}.
        \end{equation*}
    \end{cor}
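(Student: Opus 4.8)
The plan is to start from the connection relation~(\ref{weyl_con}), which exhibits $D^W$ as a perturbation $D^W_XY = D^g_XY + A(X,Y)$ of the Levi--Civita connection by the symmetric $(1,2)$-tensor
\[
A(X,Y) = -\tfrac12\theta(X)Y - \tfrac12\theta(Y)X + \tfrac12 g(X,Y)\theta^\sharp,
\]
and to feed this into the standard difference-of-curvatures identity. With the convention $R_{X,Y}=\nabla_{[X,Y]}-[\nabla_X,\nabla_Y]$ fixed in Section~\ref{Prel}, a direct expansion (using that $A$ is symmetric, so the torsion terms cancel) gives
\begin{align*}
R^W_{X,Y}Z = R^g_{X,Y}Z & - (D^g_XA)(Y,Z) + (D^g_YA)(X,Z) \\
& - A(X,A(Y,Z)) + A(Y,A(X,Z)).
\end{align*}
Thus $R^W$ splits into the Riemannian curvature, a first-order piece linear in $D^gA$ (hence in $D^g\theta$, since $D^gg=0$), and a zeroth-order piece quadratic in $A$ (hence in $\theta$). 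I would then contract each of the three pieces against $F$ via $R^W(F)(X,Y)=\tfrac12\sum_i g(R^W_{e_i,Je_i}X,Y)$, post-compose with $J$, and extract the $J$-invariant symmetric part.

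For the leading piece this is immediate: contracting $R^g_{X,Y}Z$ produces $R^g(F)$, and by the very definition $\rho^\star(X,Y)=R^g(F)(X,JY)$ the term evaluated at $(\cdot,J\cdot)$ is $\rho^\star$, whose $J$-invariant part coincides with $(\rho^\star)^{sym}$ by the symmetry $\rho^\star(X,Y)=\rho^\star(JY,JX)$ recorded above. For the first-order piece I would substitute the explicit $A$ into the antisymmetrized trace $\sum_i\big[(D^g_{Je_i}A)(e_i,\cdot)-(D^g_{e_i}A)(Je_i,\cdot)\big]$ and simplify using $\sum_i\theta(e_i)e_i=\theta^\sharp$ and $\sum_i g(X,e_i)g(Y,e_i)=g(X,Y)$ in the $J$-adapted frame $\{e_1,Je_1,e_3,Je_3\}$. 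The scalar ``$\langle D^g\theta,F\rangle$'' channel vanishes (as $g(d\theta,F)=0$), so after symmetrizing and taking the $J$-invariant part the Hessian contractions collapse to exactly $(D^g\theta)^{sym,J,+}$, with no leftover divergence term.

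The remaining work is the zeroth-order term $\tfrac12\sum_i\big[A(Je_i,A(e_i,\cdot))-A(e_i,A(Je_i,\cdot))\big]$. Expanding $A(\cdot,A(\cdot,\cdot))$ directly from the defining formula produces only the contractions $\|\theta\|^2g$, $\theta\otimes\theta$, and $\theta(J\cdot)\theta(J\cdot)$; reducing these with $\sum_i\theta(e_i)^2=\|\theta\|^2$ and the frame identities, then projecting onto the $J$-invariant symmetric part, should leave precisely $-\tfrac14\|\theta\|^2g+\tfrac12(\theta\otimes\theta)^{J,+}$. Summing the three contributions and discarding the antisymmetric ($2$-form) remainder yields the stated identity.

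I expect the \emph{main obstacle} to be the bookkeeping of the projections rather than any single computation: one must track how the antisymmetric $2$-form $R^W(F)$ becomes a symmetric tensor after post-composition with $J$, verify that the $\tfrac12\sum_i$ normalization and the antisymmetry $R_{X,Y}=-R_{Y,X}$ introduce no spurious factors, and confirm both that the first-order channel carries no isolated $\delta^g\theta\,g$ term and that the quadratic-in-$\theta$ contractions assemble into exactly the coefficients $-\tfrac14$ and $\tfrac12$. As a sanity check one can trace both sides: $\tr_g$ of the right-hand side gives $s^\star-\delta^g\theta-\tfrac12\|\theta\|^2$ (using $\tr_g(D^g\theta)^{sym,J,+}=-\delta^g\theta$), which must match the corresponding trace of $R^W(F)^{J,+}_{\cdot,J\cdot}$. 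The comparison of the leading term with $\rho^\star$ and the final symmetrization are the conceptual crux; the rest is a finite-dimensional contraction in the $J$-adapted frame.
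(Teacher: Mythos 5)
Your proposal is correct and takes essentially the same route the paper intends: the paper presents Corollary~\ref{cor_ric-weyl} as following directly from the connection relation~(\ref{weyl_con}), and your expansion $D^W=D^g+A$, the difference-of-curvatures identity, contraction with $F$, and projection onto the $J$-invariant symmetric part is precisely that computation. All of your anticipated cancellations and coefficients check out: the leading term gives $(\rho^\star)^{sym}$ via $\rho^\star(X,Y)=\rho^\star(JY,JX)$, the first-order term yields exactly $(D^g\theta)^{sym,J,+}$ with the potential scalar channel killed by $g(d\theta,F)=0$ and no stray $\delta^g\theta\,g$ term arising, and the quadratic-in-$\theta$ term assembles into $-\tfrac14\|\theta\|^2g+\tfrac12(\theta\otimes\theta)^{J,+}$.
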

    
    Then, combining Corollary~\ref{cor_2-chern} with Corollary~\ref{cor_ric-weyl} and using~(\ref{diff_ricci}) and~(\ref{diff_sca}), we deduce the following
    \begin{cor}\label{chern-ric}
        Let $(M,g,J)$ be an almost-Hermitian $4$-dimensional manifold. Then,
        \begin{equation*}
            r_{\cdot,J\cdot}=\left(Ric^g\right)^{J,+}+\frac{1}{4}\|N\|^2g+\left(D^g\theta\right)^{sym,J,+}+\frac{1}{2}\left(\theta\otimes\theta\right)^{J,+}.
        \end{equation*}
    \end{cor}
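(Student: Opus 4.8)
The plan is to assemble the identity by pure algebra from the three results already in hand, with no further curvature computation. First I would take the formula of Corollary~\ref{cor_2-chern},
$$r=(R^W(F))^{J,+}+\tfrac{1}{2}(\delta^g\theta+\|\theta\|^2)F-\tfrac{1}{4}\|N\|^2F,$$
and evaluate both sides on $(X,JY)$, that is, apply the operation $(\cdot)_{\cdot,J\cdot}$. The key point to check here is that this operation sends a $J$-invariant $2$-form to a $g$-symmetric $J$-invariant $2$-tensor, and that $F_{\cdot,J\cdot}=g$, since $F(X,JY)=g(JX,JY)=g(X,Y)$. This converts the equation into one between symmetric tensors,
$$r_{\cdot,J\cdot}=(R^W(F))^{J,+}_{\cdot,J\cdot}+\tfrac{1}{2}(\delta^g\theta+\|\theta\|^2)g-\tfrac{1}{4}\|N\|^2g.$$

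Next I would substitute Corollary~\ref{cor_ric-weyl} for the term $(R^W(F))^{J,+}_{\cdot,J\cdot}$, replacing it by $(\rho^\star)^{sym}+(D^g\theta)^{sym,J,+}-\tfrac14\|\theta\|^2g+\tfrac12(\theta\otimes\theta)^{J,+}$. To trade the $\star$-Ricci contribution for the Riemannian one, I would invoke~(\ref{diff_ricci}) to write $(\rho^\star)^{sym}=(Ric^g)^{J,+}+\tfrac{s^\star-s^g}{4}g$, and then expand $s^\star-s^g$ via~(\ref{diff_sca}) as $s^\star-s^g=-2\delta^g\theta-\|\theta\|^2+2\|N\|^2$, so that $\tfrac{s^\star-s^g}{4}g=\left(-\tfrac12\delta^g\theta-\tfrac14\|\theta\|^2+\tfrac12\|N\|^2\right)g$.

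The remainder is bookkeeping of the coefficients of $g$, with three scalar quantities to track. The $\delta^g\theta$ contributions are $+\tfrac12$ (from Corollary~\ref{cor_2-chern}) and $-\tfrac12$ (from $\tfrac{s^\star-s^g}{4}g$), which cancel; the $\|\theta\|^2$ contributions are $+\tfrac12$ (from Corollary~\ref{cor_2-chern}), $-\tfrac14$ (from Corollary~\ref{cor_ric-weyl}) and $-\tfrac14$ (from $\tfrac{s^\star-s^g}{4}g$), which also cancel; and the $\|N\|^2$ contributions are $-\tfrac14$ (from Corollary~\ref{cor_2-chern}) and $+\tfrac12$ (from $\tfrac{s^\star-s^g}{4}g$), combining to $+\tfrac14\|N\|^2g$. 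What survives is precisely
$$r_{\cdot,J\cdot}=(Ric^g)^{J,+}+\tfrac14\|N\|^2g+(D^g\theta)^{sym,J,+}+\tfrac12(\theta\otimes\theta)^{J,+}.$$

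Since the argument is a single chain of substitutions, I do not expect a genuine obstacle; the only step requiring care is the first one, where one must confirm that $(\cdot)_{\cdot,J\cdot}$ is being applied consistently to all four ingredients, so that the four statements are combined as identities between the same class of $g$-symmetric $J$-invariant $2$-tensors. Once that compatibility is in place, the scalar algebra closes automatically and the displayed formula follows.
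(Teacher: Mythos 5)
Your proposal is correct and is essentially the paper's own proof: the paper derives Corollary~\ref{chern-ric} precisely by combining Corollary~\ref{cor_2-chern} with Corollary~\ref{cor_ric-weyl} and substituting~(\ref{diff_ricci}) and~(\ref{diff_sca}), exactly as you do, and your coefficient bookkeeping (cancellation of the $\delta^g\theta$ and $\|\theta\|^2$ terms, with $\tfrac14\|N\|^2 g$ surviving) checks out, as does the preliminary observation that $F_{\cdot,J\cdot}=g$ and that $(\cdot)_{\cdot,J\cdot}$ carries $J$-invariant $2$-forms to symmetric $J$-invariant tensors.
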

    
\subsection{The conditions $(D^g\theta)^{sym,J,-}=0$ and $\mathcal{L}_{\theta^\sharp}g=0$}
    In~\cite{MR633563} Gauduchon proved that every conformal class of an almost-Hermitian metric has a unique representative $g$ (up to constant) which satisfies $\delta^g\theta=0$. We call such metric (with a unit total volume) a Gauduchon metric. With this assumption, using Corollory~\ref{chern-ric} we can prove the following (when the metric is Einstein--Weyl see~\cite{MR1171560} and also~\cite{MR1363825})
    \begin{lem}\label{killing}
        Let $(M,\tilde{g},J)$ be a compact $4$-dimensional almost-Hermitian manifold and $\tilde{g}$ is a second-Chern--Einstein metric. Suppose that the Gauduchon metric $g$ in the conformal class $[\tilde{g}]$ satisfies $(D^g\theta)^{sym,J,-}=0$, where $\theta$ is the Lee form of $g$. Then, the $g$-Riemannian dual of $\theta$ is a Killing vector field of $g.$
    \end{lem}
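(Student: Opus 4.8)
The plan is to show that the $g$-symmetric part $S:=(D^g\theta)^{sym}$ of $D^g\theta$ vanishes identically; since $\mathcal{L}_{\theta^\sharp}g=2S$, this is exactly the Killing condition. The strategy is a Bochner-type argument: produce a pointwise formula for $S$ from the second-Chern--Einstein equation, pair it with $S$, and integrate, using the Gauduchon gauge to kill the right-hand side.

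First I would use that the second-Chern--Einstein condition is conformally invariant (by~\eqref{conf-change}) to pass from $\tilde g$ to the Gauduchon representative $g$, which is therefore also second-Chern--Einstein; thus $r=\tfrac{s^H}{4}F$, i.e.\ $r_{\cdot,J\cdot}=\tfrac{s^H}{4}g$. Feeding this into Corollary~\ref{chern-ric} and invoking the hypothesis $(D^g\theta)^{sym,J,-}=0$, so that $S$ coincides with its $J$-invariant part, yields
\begin{equation*}
S=\frac{s^H}{4}\,g-\left(Ric^g\right)^{J,+}-\frac14\|N\|^2\,g-\frac12\left(\theta\otimes\theta\right)^{J,+}.
\end{equation*}
Two elementary remarks will drive the computation: $S$ is trace-free, since $\mathrm{tr}_gS=-\delta^g\theta=0$ by the Gauduchon condition; and the decomposition of symmetric $2$-tensors into $J$-invariant and $J$-anti-invariant parts is $g$-orthogonal, so $S$ (being $J$-invariant) is $g$-orthogonal to every $J$-anti-invariant tensor.

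Next I would take the $g$-inner product of the displayed identity with $S$. Trace-freeness annihilates the two terms proportional to $g$, while $J$-invariance of $S$ lets me replace $(Ric^g)^{J,+}$ by $Ric^g$ and $(\theta\otimes\theta)^{J,+}$ by $\theta\otimes\theta$, giving the pointwise relation
\begin{equation*}
|S|^2=-\langle S,Ric^g\rangle-\frac12\,S(\theta^\sharp,\theta^\sharp).
\end{equation*}
It then remains to integrate over $M$ and show both terms vanish. For the last term I would use the identity $S(\theta^\sharp,\theta^\sharp)=\tfrac12\,\theta^\sharp(\|\theta\|^2)$, whose integral is $\tfrac12\int_M\|\theta\|^2\,\delta^g\theta=0$. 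For $\int_M\langle S,Ric^g\rangle$ I would integrate the divergence of the vector field dual to $Ric^g(\theta^\sharp,\cdot)$: expanding this divergence by the Leibniz rule produces $(\mathrm{div}\,Ric^g)(\theta^\sharp)+\langle Ric^g,S\rangle$, and the contracted second Bianchi identity rewrites the first summand as a multiple of $\langle ds^g,\theta\rangle$; since $\int_M\langle ds^g,\theta\rangle=\int_M s^g\,\delta^g\theta=0$, the divergence theorem gives $\int_M\langle S,Ric^g\rangle=0$. Hence $\int_M|S|^2=0$, so $S\equiv0$ and $\theta^\sharp$ is Killing.

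The only genuinely geometric step---everything else being bookkeeping---is the vanishing of $\int_M\langle S,Ric^g\rangle$, where one must recognize that the Einstein-type equation together with trace-freeness of $S$ strips away the scalar-curvature information, leaving a term controlled solely by the contracted Bianchi identity and the Gauduchon gauge $\delta^g\theta=0$. I would also keep an eye on sign and normalization conventions (codifferential versus divergence, and the paper's curvature sign convention), but these are immaterial to the conclusion, since the two delicate integrals vanish irrespective of sign.
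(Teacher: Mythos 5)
Your proposal is correct and follows essentially the same route as the paper: pass to the Gauduchon representative by conformal invariance, feed $r=\tfrac{s^H}{4}F$ into Corollary~\ref{chern-ric}, pair the resulting identity with $(D^g\theta)^{sym}$ (using the hypothesis to identify it with its $J$-invariant part and orthogonality of the $J$-decomposition to drop the projections), and integrate using the contracted Bianchi identity and $\delta^g\theta=0$ to conclude $\int_M\|(D^g\theta)^{sym}\|^2\,v_g=0$. Your packaging via trace-freeness of $S$ and the divergence of $Ric^g(\theta^\sharp,\cdot)$ is just a cosmetic rephrasing of the paper's integration by parts with $\delta^g Ric^g$ and $\delta^g(\theta\otimes\theta)$.
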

    \begin{proof}
        The condition $r=\frac{s^H}{4} F$ is conformally invariant. Then, from Corollory~\ref{chern-ric}, we have for the Gauduchon metric $g$
        \begin{align*}
            \frac{s^H}{4} g&=(Ric^g)^{J,+}+\frac{1}{4}\|N\|^2g+(D^g\theta)^{sym,J,+}+\frac{1}{2}(\theta\otimes\theta)^{1,1}.
        \end{align*}
        We recall that $\delta^g\theta=-g(D^g\theta,g)$. Taking the inner product with $(D^g\theta)^{sym}=(D^g\theta)^{sym,J,+}$ and integrating we have
        \begin{multline*}
            -\int_M\frac{s^H}{4}\delta^g\theta\, v_g=\int_Mg((Ric^g)^{J,+},(D^g\theta)^{sym})\\
            -\frac{1}{4}\|N\|^2\delta^g\theta+\|(D^g\theta)^{sym}\|^2+\frac{1}{2}g((\theta\otimes\theta)^{J,+},(D^g\theta)^{sym})\,v_g,
        \end{multline*}
        where $v_g$ is the volume form. Since the metric is Gauduchon i.e. $\delta^g\theta=0$, we obtain
        \begin{align*}
            \int_M\|(D^g\theta)^{sym}\|^2\,v_g&=-\int_Mg((Ric^g)^{J,+},(D^g\theta)^{sym})+\frac{1}{2}g((\theta\otimes\theta)^{J,+},(D^g\theta)^{sym})\,v_g,\\
            &=-\int_Mg(Ric^g,D^g\theta)+\frac{1}{2}g(\theta\otimes\theta,D^g\theta)\,v_g,\\
            &=-\int_Mg(\delta^gRic^g,\theta)+\frac{1}{2}g(\delta^g(\theta\otimes\theta),\theta)\,v_g,\\
            &=-\int_Mg(-\frac{1}{2}ds^g,\theta)+\frac{1}{2}\left(\delta^g\theta \|\theta^g\|^2-g(D^g_{\theta}\theta,\theta)\right)\,v_g,\\
            &=-\int_M-\frac{1}{2}s^g\delta^g\theta+\frac{1}{2}\left(-\frac{1}{2}g(d\|\theta\|^2_g,\theta)\right)\,v_g,\\
            &=\frac{1}{4}\int_M\|\theta\|^2_g\delta^g\theta\,v_g =0, 
        \end{align*}
        where we used the contracted Bianchi identity $\delta^gRic^g=-\frac{1}{2}ds^g$. The Lemma follows.
    \end{proof}

    We have seen that in the integrable case second-Chern--Einstein is equivalent to Weyl--Einstein (also in greater dimension if we ask the LCK condition). Then, second-Chern--Einstein together with LCK implies that $D^g\theta = 0$ and that both $\theta^\sharp, J\theta^\sharp$ are Killing real holomorphic vector fields, see~\cite{MR1363825,MR1249377}. However, in the almost-Hermitian case the condition $(D^g\theta)^{sym,J,-}=0$ is necessary, see the examples in Section~\ref{Sec_3}. \\
    Moreover, we remark that $(D^g\theta)^{sym,J,-}=0$ is equivalent to $\left(\mathcal{L}_{\theta^\sharp}J\right)^{sym}=0,$ where $\mathcal{L}$ is the Lie derivative, so the flow of the vector field $\theta^\sharp$ does not necessarily preserve $J$ i.e. $\theta^\sharp$ is not necessarily a real holomorphic vector field. We also remark that when $M$ is compact, the condition that $J\theta^\sharp$ is a Killing vector field implies that $(M,g,J)$ is LCS i.e. $d\theta=0.$ Indeed, applying the Lie derivative $\mathcal{L}_{J\theta^\sharp}$ to the relation $F=g(J\cdot,\cdot)$ we get 
    $$d\theta=-2\left(D^gJ\theta \right)^{sym}_{J\cdot,\cdot}-g\left(\mathcal{L}_{J\theta^\sharp}J\cdot,\cdot\right).$$
    Hence, if $\left(D^gJ\theta \right)^{sym}=0$ then $d\theta$ is $J$-anti-invariant. Thus, $d\theta$ is a self-dual $d$-exact $2$-form on a compact manifold so $d\theta=0.$ However, the converse is not true in general: if $d\theta=0$ then only the $J$-invariant part of $\left(D^gJ\theta \right)^{sym}$ vanishes i.e. $\left(D^gJ\theta \right)^{sym,J,+}=0$ so $J\theta^\sharp$ is not necessarily a Killing vector field.

\subsection{Constant scalar curvatures}
    Let $g$ be a second-Chern--Einstein metric. If the Hermitian scalar curvature $s^H$ is a constant (respectively a non-constant function) then $g$ is called a strong (respectively weak) second-Chern--Einstein metric~\cite{MR4125707}. Since the conformal change of the second Chern--Ricci form~(\ref{conf-change}) is the same as in the integrable case, we can generalize~\cite[Theorem B]{MR4125707} to the almost-Hermitian setting (see also~\cite{MR633563,MR3696598,MR3833814,MR4070351,MR4203643})
    \begin{defn}
        Let $g$ be the Gauduchon metric in the conformal class $[\tilde{g}]$. Then, the fundamental constant~\cite{MR486672,MR779217,MR1712115} is  $$C(M,[\tilde{g}],J)=\int_M\,s^H\frac{F^n}{n!},$$
        where $s^H$ is the Hermitian scalar curvature of the almost-Hermitian structure $(g,J)$ inducing the fundamental form $F.$
    \end{defn}

    \begin{thm}~\cite[Theorem B]{MR4125707}\cite[Corollary 5.10]{MR4184828}
        Let $(M,\tilde{g},J)$ be a compact $4$-dimensional almost-Hermitian manifold and suppose that $\tilde{g}$ is a weak second-Chern--Einstein metric. Then, there is a representative in $[\tilde{g}]$ such that its Hermitian scalar curvature has the same sign as $C(M,[\tilde{g}],J)$. Moreover, if $C(M,[\tilde{g}],J)\leq 0$, then there is a strong second-Chern--Einstein representative in $[\tilde{g}]$.
    \end{thm}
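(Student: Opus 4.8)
My plan is to transfer the whole problem to the Gauduchon representative and to a single scalar identity. Since the second-Chern--Einstein condition is conformally invariant via~(\ref{conf-change}), the Gauduchon metric $g\in[\tilde g]$ (of unit volume) is again second-Chern--Einstein, so $r=\frac{s^H}{4}F$. Writing a competing representative as $\hat g=e^{2f}g$, its fundamental form is $\hat F=e^{2f}F$ and, by the same conformal invariance, its second Chern--Ricci form is $\hat r=\frac{\hat s^H}{4}\hat F$; comparing with $\hat r=r+(\Delta^g f+g(\theta,df))F$ from~(\ref{conf-change}) I obtain the scalar conformal-change identity
\begin{equation*}
\hat s^H\,e^{2f}=s^H+4\,Lf,\qquad Lf:=\Delta^g f+g(\theta,df),
\end{equation*}
where $\hat s^H$ is the Hermitian scalar curvature of $\hat g$. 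Two properties of the elliptic operator $L$ will drive everything, and I abbreviate $C:=C(M,[\tilde g],J)$. First, because $g$ is Gauduchon ($\delta^g\theta=0$), integrating by parts gives $\int_M Lf\,v_g=\int_M f\,\delta^g\theta\,v_g=0$, so $\int_M(s^H+4Lf)\,v_g=\int_M s^H\,v_g=C$ is independent of $f$; this is where the fundamental constant enters. Second, $L$ has no zeroth-order term, so by the maximum principle $\ker L=\R$, and the same holds for the formal adjoint $L^\ast h=\Delta^g h-g(\theta,dh)$; by the Fredholm alternative the image of $L$ is therefore exactly the space of $v_g$-mean-zero functions.

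For the first assertion I exploit that $e^{-2f}>0$, so the pointwise sign of $\hat s^H$ equals that of $s^H+4Lf$. Since $g$ has unit volume and $\int_M s^H\,v_g=C$, the function $C-s^H$ has vanishing mean and hence lies in the image of $L$; solving the \emph{linear} equation $4Lf=C-s^H$ makes $s^H+4Lf\equiv C$ constant, whence $\hat s^H=C\,e^{-2f}$ has everywhere the sign of $C$. When $C=0$ this already gives $\hat s^H\equiv0$, and when $C\neq0$ it gives a representative of constant sign, proving the first claim.

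For the strong statement suppose $C\le0$. If $C=0$ the representative just produced already has $\hat s^H\equiv0$, a constant, hence a strong second-Chern--Einstein metric. If $C<0$ I must instead find $f$ and a constant $c<0$ with $\hat s^H\equiv c$, i.e.\ solve the semilinear elliptic equation
\begin{equation*}
4\,Lf=c\,e^{2f}-s^H .
\end{equation*}
Integrating against $v_g$ and using $\int_M Lf\,v_g=0$ forces $c=C/\!\int_M e^{2f}\,v_g$, so $c$ automatically has the sign of $C$, i.e.\ is negative, and a constant shift of $f$ normalises $c$ to any prescribed negative value. Geometrically this is the problem of prescribing constant \emph{negative} Hermitian scalar curvature, the coercive good-sign case: I would obtain a solution by the classical machinery for such equations---the method of sub- and supersolutions, or a continuity method starting from the already-solved case $c=0$ together with the good-sign a priori estimates furnished by the maximum principle. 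Because~(\ref{conf-change}) is formally identical to its Hermitian counterpart, this is precisely the argument of the integrable case and transfers without change.

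The genuine obstacle is the case $C<0$. The operator $L$ is not self-adjoint---its adjoint is $\Delta^g-g(\theta,d\cdot)$, which differs from $L$ by the sign of the Lee drift unless $\theta$ is exact---so the semilinear existence cannot be read off a naive variational principle and must be obtained through the good-sign analysis. Concretely, a large constant supplies a subsolution and the linear solution of the first part supplies a supersolution once $|c|$ is normalised appropriately, but arranging these to be correctly \emph{ordered} when $s^H$ changes sign (which it must, as $C<0$) is delicate; closing the a priori estimates there, rather than in the essentially formal first part and the $C=0$ case, is where the analytic work sits. The final clause is then immediate: for $C\le0$ the strong representative has $\hat s^H$ constant with the sign of $C$, as required.
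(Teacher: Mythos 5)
Your argument is correct and is essentially the proof the paper relies on: the paper itself gives no proof but observes that the conformal variation formula~(\ref{conf-change}) is identical to the integrable case and cites \cite[Theorem B]{MR4125707} and \cite[Corollary 5.10]{MR4184828}, whose argument is exactly your reconstruction (pass to the Gauduchon gauge, note $\int_M Lf\,v_g=0$ and $\ker L=\ker L^{\ast}=\R$ so the drift Laplacian $L=\Delta^g+g(\theta,d\cdot)$ surjects onto mean-zero functions, solve the linear equation to make $s^H+4Lf\equiv C$, and in the case $C<0$ solve the good-sign semilinear equation $4Lf=ce^{2f}-s^H$ by monotone/sub-supersolution methods, which do not require self-adjointness). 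The only cosmetic refinement to your sketch of the negative case is that both barriers can be taken as constant shifts $u_0+a_{\pm}$ of the linear solution $u_0$ (or one first passes to the representative with pointwise negative $\hat s^H=Ce^{-2f}$, where constants serve as ordered barriers), so the ordering issue you flag dissolves by the standard trick.
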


    Moreover, we can prove that $s^W$ is constant under some conditions (in the Einstein--Weyl case, see~\cite[Proposition 2.1]{MR1199072}). First of all, thanks to Corollary~\ref{chern-ric} and Lemma~\ref{killing}, we get the following
    \begin{lem}\label{expression-ricci}
        Suppose that $(M,\tilde{g},J)$ is a compact $4$-dimensional almost-Hermitian manifold where $\tilde{g}$ is a second-Chern--Einstein metric. Suppose that the Gauduchon metric $g$ in the conformal class $[\tilde{g}]$ satisfies $(D^g\theta)^{sym,J,-}=0$, where $\theta$ is the Lee form of $g$. Then
        \begin{align}\label{ric-expression}
            (Ric^g)^{J,+}&=\frac{s^H}{4} g-\frac{1}{4}\|N\|^2g-\frac{1}{2}(\theta\otimes\theta)^{J,+}.
        \end{align}
    \end{lem}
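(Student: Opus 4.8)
The plan is to read the identity off from Corollary~\ref{chern-ric} once the first-derivative term has been eliminated by Lemma~\ref{killing}. The only real input beyond bookkeeping is the Killing property, so I would set things up to make that the pivotal step.

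First I would exploit that the second-Chern--Einstein equation $r=\frac{s^H}{4}F$ is conformally invariant, as recorded after~(\ref{conf-change}); hence it holds for the Gauduchon representative $g$ as well, with $s^H$ now the Hermitian scalar curvature of $g$. Contracting the second argument with $J$ and using that $J$ is $g$-orthogonal, so that $F(X,JY)=g(JX,JY)=g(X,Y)$, this becomes $r_{\cdot,J\cdot}=\frac{s^H}{4}g$. Substituting into Corollary~\ref{chern-ric} then gives
\[
\frac{s^H}{4}g=(Ric^g)^{J,+}+\frac14\|N\|^2g+(D^g\theta)^{sym,J,+}+\frac12(\theta\otimes\theta)^{J,+}.
\]
Solving for $(Ric^g)^{J,+}$ would already produce the desired formula, except for the presence of the term $(D^g\theta)^{sym,J,+}$, which must be shown to vanish.

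This is the one genuinely load-bearing step, and it is supplied by Lemma~\ref{killing}: under precisely the present hypotheses ($M$ compact, $\tilde g$ second-Chern--Einstein, and $(D^g\theta)^{sym,J,-}=0$ for the Gauduchon metric), the dual vector field $\theta^\sharp$ is Killing for $g$. Since $\mathcal{L}_{\theta^\sharp}g=2(D^g\theta)^{sym}$, being Killing forces the full symmetric part $(D^g\theta)^{sym}$ to vanish, so in particular $(D^g\theta)^{sym,J,+}=0$.

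Dropping this term from the displayed equation and rearranging yields
\[
(Ric^g)^{J,+}=\frac{s^H}{4}g-\frac14\|N\|^2g-\frac12(\theta\otimes\theta)^{J,+},
\]
as claimed. I do not anticipate any obstacle beyond keeping track of the $J$-type decompositions; the point worth emphasizing is that Lemma~\ref{killing} upgrades the \emph{anti-invariant} hypothesis $(D^g\theta)^{sym,J,-}=0$ to the \emph{full} vanishing $(D^g\theta)^{sym}=0$, and it is exactly this upgrade that removes the last derivative term.
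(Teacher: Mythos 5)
Your proof is correct and follows exactly the route the paper intends: the paper derives Lemma~\ref{expression-ricci} precisely by combining Corollary~\ref{chern-ric} with the conformally invariant second-Chern--Einstein equation and then using Lemma~\ref{killing} to conclude $(D^g\theta)^{sym}=0$, hence $(D^g\theta)^{sym,J,+}=0$. Your explicit emphasis that the Killing property upgrades the $J$-anti-invariant hypothesis to full vanishing of $(D^g\theta)^{sym}$ is exactly the mechanism at work in the paper's (terse) argument.
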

    Then we can prove that
    \begin{prop}
        Let $(M,\tilde{g},J)$ be a compact $4$-dimensional almost-Hermitian manifold and $\tilde{g}$ is a second-Chern--Einstein metric. Suppose that the Gauduchon metric $g$ in the conformal class $[\tilde{g}]$ satisfies $(D^g\theta)^{sym,J,-}=0$, where $\theta$ is the Lee form of $g$. Moreover, suppose that $Ric^g$ is $J$-invariant and $J\theta^\sharp$ is a Killing vector field. Then the conformal scalar curvature $s^W$ of $g$ is constant.
    \end{prop}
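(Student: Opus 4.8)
The plan is to upgrade the Killing property of $\theta^\sharp$ to $D^g$-parallelism of $\theta$, and then to extract $s^W$ algebraically from the pointwise Ricci expression of Lemma~\ref{expression-ricci}.

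First I would record the two Killing fields and deduce that $\theta$ is harmonic. Since $(M,\tilde g,J)$ is a compact second-Chern--Einstein manifold whose Gauduchon representative $g$ satisfies $(D^g\theta)^{sym,J,-}=0$, Lemma~\ref{killing} applies and $\theta^\sharp$ is a Killing field of $g$. The additional hypothesis that $J\theta^\sharp$ is Killing means $(D^gJ\theta)^{sym}=0$; feeding this into the identity $d\theta=-2(D^gJ\theta)^{sym}_{J\cdot,\cdot}-g(\mathcal{L}_{J\theta^\sharp}J\cdot,\cdot)$ recorded above shows that $d\theta$ is $J$-anti-invariant, hence self-dual with no $F$-component (as $g(d\theta,F)=0$); being moreover exact on the compact $M$, it vanishes. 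Thus $(M,g,J)$ is LCS with $d\theta=0$, and together with the Gauduchon condition $\delta^g\theta=0$ this makes $\theta$ a harmonic one-form.

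Next I would combine two Weitzenböck identities. Since $\theta^\sharp$ is Killing one has $\nabla^*\nabla\theta=Ric^g(\theta)$, while since $\theta$ is harmonic the Bochner formula $\Delta_H=\nabla^*\nabla+Ric^g$ gives $\nabla^*\nabla\theta=-Ric^g(\theta)$. Subtracting forces $Ric^g(\theta^\sharp,\cdot)=0$ and $\nabla^*\nabla\theta=0$; pairing the latter with $\theta$ and integrating over the compact $M$ yields $\int_M\|D^g\theta\|^2\,v_g=0$, so $D^g\theta=0$. Hence $\theta$ is $D^g$-parallel, $\|\theta\|^2$ is a (nonnegative) constant, and $Ric^g(\theta^\sharp,\cdot)=0$. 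Assuming first $\theta\neq0$, so $\|\theta\|^2$ is a positive constant, the $J$-invariance of $Ric^g$ turns Lemma~\ref{expression-ricci} into $Ric^g=\tfrac{s^H}{4}g-\tfrac14\|N\|^2 g-\tfrac12(\theta\otimes\theta)^{J,+}$. Contracting with $\theta^\sharp$, using $g(\theta^\sharp,J\theta^\sharp)=0$ together with $Ric^g(\theta^\sharp,\cdot)=0$, gives $s^H=\|N\|^2+\|\theta\|^2$; taking the $g$-trace gives $s^g=s^H-\|N\|^2-\tfrac12\|\theta\|^2$. Combining the two produces $s^g=\tfrac12\|\theta\|^2$, a constant, whereupon \eqref{conformal_sca} with $\delta^g\theta=0$ yields $s^W=s^g-\tfrac32\|\theta\|^2=-\|\theta\|^2$, which is constant (and consistent with the non-positivity of $s^W$ in Theorem~\ref{thm1}). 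In the degenerate case $\theta\equiv0$ the same lemma shows $Ric^g$ is a pointwise multiple of $g$, so Schur's lemma forces $s^g$ constant and $s^W=s^g$.

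The main obstacle is the sign bookkeeping in the two Weitzenböck identities: the whole argument hinges on the Ricci term appearing with opposite signs for Killing versus harmonic one-forms, which is precisely what forces parallelism. The remaining verifications—the vanishing of the self-dual exact form that gives $d\theta=0$, and the two contractions of Lemma~\ref{expression-ricci}—are routine.
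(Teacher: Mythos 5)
Your proof is correct, but it takes a genuinely different route from the paper's. The paper never establishes $d\theta=0$ or $D^g\theta=0$: it applies $\delta^g$ to the identity of Lemma~\ref{expression-ricci}, invoking the contracted Bianchi identity $\delta^gRic^g=-\frac{1}{2}ds^g$ (this is where the $J$-invariance of $Ric^g$ enters there), combines with the trace identity $s^g=s^H-\|N\|^2-\frac{1}{2}\|\theta\|^2$, applies $\delta^g$ once more, and converts $\delta^gD^g_{\theta^\sharp}\theta$ and $\delta^gD^g_{J\theta^\sharp}J\theta$ into $-\frac{1}{2}\Delta^g(\|\theta\|^2)$ via the Cartan formula and the two Killing fields; the upshot is $\Delta^g\left(s^H-\|N\|^2-2\|\theta\|^2\right)=0$, and this function equals $s^W$ by (\ref{conformal_sca}), hence is constant by compactness. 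You instead perform a structural reduction: the hypothesis that $J\theta^\sharp$ is Killing on a compact manifold forces $d\theta=0$ (this is precisely the remark recorded in Section~\ref{Sec_2} of the paper), so the setting becomes LCS; then $\theta$ is $D^g$-parallel, $Ric^g(\theta^\sharp,\cdot)=0$, and pointwise algebra in Lemma~\ref{expression-ricci} finishes. This buys strictly more than the statement asks: parallelism of $\theta$ and the explicit value $s^W=-\|\theta\|^2$ (non-positive, consistent with Theorem~\ref{thm1}; in fact, since your argument shows the hypotheses here imply those of Theorem~\ref{thm1}, comparing with the relation $s^H=2\|\theta\|^2-\|N\|^2$ from that theorem even yields $\|\theta\|^2=2\|N\|^2$, so $\|N\|$ is constant as well). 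The paper's argument is softer — it only exhibits $s^W$ as a harmonic function — but requires no case analysis. One simplification for your write-up: the Weitzenböck detour is unnecessary. Once $\theta^\sharp$ is Killing (Lemma~\ref{killing}) and $d\theta=0$, the pointwise decomposition $D^g\theta=(D^g\theta)^{sym}+\frac{1}{2}d\theta$ gives $D^g\theta=0$ immediately (exactly as the paper argues in the proof of Theorem~\ref{thm1}), and then $R^g_{X,Y}\theta^\sharp=0$ yields $Ric^g(\theta^\sharp,\cdot)=0$ with no Bochner formula; your sign bookkeeping in the two Weitzenböck identities is nonetheless correct, so this is a matter of economy rather than of validity.
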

    \begin{proof}
        We apply the codifferential $\delta^g$ to~(\ref{ric-expression}) and we use the contracted Bianchi identity $\delta^gRic^g=-\frac{1}{2}ds^g$. We obtain
        \begin{align*}
            -\frac{1}{2}ds^g&=-\frac{1}{4}ds^H+\frac{1}{4}d(\|N\|^2)-\frac{1}{4}\delta^g\left(\theta\otimes\theta \right)-\frac{1}{4}\delta^g\left(J\theta\otimes J\theta \right),\\
            &=-\frac{1}{4}ds^H+\frac{1}{4}d(\|N\|^2)-\frac{1}{4}\left(\delta^g\theta\otimes\theta-D^g_{\theta^\sharp}\theta \right)-\frac{1}{4}\left(\delta^gJ\theta\otimes\theta-D^g_{J\theta^\sharp}J\theta  \right),\\
            &=-\frac{1}{4}ds^H+\frac{1}{4}d(\|N\|^2)+\frac{1}{4}D^g_{\theta^\sharp}\theta+\frac{1}{4}D^g_{J\theta^\sharp}J\theta.
        \end{align*}
        Hence
        $$ds^g=\frac{1}{2}ds^H-\frac{1}{2}d(\|N\|^2)-\frac{1}{2}D^g_{\theta^\sharp}\theta-\frac{1}{2}D^g_{J\theta^\sharp}J\theta.$$
        On the other hand, from the trace of~~(\ref{ric-expression}), we have $s^g=s^H-\|N\|^2-\frac{1}{2}\|\theta\|^2.$ Thus,
        \begin{equation*}
            d\left(s^H-\|N\|^2 \right)=d(\|\theta\|^2)-D^g_{\theta^\sharp}\theta-D^g_{J\theta^\sharp}J\theta.
        \end{equation*}
        Applying again the codifferential $\delta^g$, we have
        \begin{equation}\label{sh_constant}
            \Delta^g\left(s^H-\|N\|^2 \right)=\Delta^g(\|\theta\|^2)-\delta^gD^g_{\theta^\sharp}\theta-\delta^gD^g_{J\theta^\sharp}J\theta,
        \end{equation}
        where $\Delta^g$ is the Laplacian. From Lemma~\ref{killing}, $\theta^\sharp$ is Killing. Thus, using Cartan formula and because $\mathcal{L}_{\theta^\sharp}\theta=0$ we get
        \begin{equation}
            \delta^gD^g_{\theta^\sharp}\theta=\frac{1}{2}\delta^g\left(d\theta(\theta^\sharp,\cdot)\right)=-\frac{1}{2}\delta^gd(\|\theta\|^2)=-\frac{1}{2}\Delta^g(\|\theta\|^2).
        \end{equation}
        Similarly, because $J\theta^\sharp$ is a Killing vector field, we have $\delta^gD^g_{J\theta^\sharp}J\theta=-\frac{1}{2}\Delta^g(\|\theta\|^2).$ From~(\ref{sh_constant}), we obtain
        \begin{equation*}
            \Delta^g\left(s^H-\|N\|^2-2\|\theta\|^2\right)=0.
        \end{equation*}
        Since $M$ is compact and from~(\ref{conformal_sca}) we get that $s^H-\|N\|^2-2\|\theta\|^2=s^g-\frac{3}{2}\|\theta\|^2=s^W$ is a constant.
    \end{proof}


    We can also deduce from~(\ref{weyl_con}), Lemma~\ref{killing} and Lemma~\ref{expression-ricci} that
    \begin{cor}
        Suppose that $(M,\tilde{g},J)$  is a compact $4$-dimensional almost-Hermitian manifold where $\tilde{g}$ is a second-Chern--Einstein metric. Suppose that the Gauduchon metric $g$ in the conformal class $[\tilde{g}]$ satisfies $(D^g\theta)^{sym,J,-}=0$, where $\theta$ is the Lee form of $g$. Then
        \begin{align*}
            Ric^W&=Ric^g-\frac{1}{2}\left(\|\theta\|^2g-\theta\otimes\theta\right),\\
            &=\frac{s^W}{4}g+(Ric^g)^{J,-}+\frac{1}{2}(\theta\otimes\theta)^{J,-}.
        \end{align*}
        In particular, the metric $g$ is Einstein--Weyl if and only if
        \begin{align}\label{E-W-condition}
            (Ric^g)^{J,-}&=-\frac{1}{2}(\theta\otimes\theta)^{J,-}.
        \end{align}
    \end{cor}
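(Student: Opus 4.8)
The plan is to produce the first displayed identity by contracting the curvature of $D^W$ built from the difference formula~(\ref{weyl_con}), then to collapse it using the two structural hypotheses, and finally to read off the second identity and the Einstein--Weyl characterization by separating $J$-invariant and $J$-anti-invariant parts. First I would write $D^W = D^g + A$ with $A(X,Y) = -\tfrac12\theta(X)Y - \tfrac12\theta(Y)X + \tfrac12 g(X,Y)\theta^\sharp$ and substitute into the curvature formula for a connection differing from $D^g$ by a tensor $A$ (using the paper's convention $R_{X,Y}=\nabla_{[X,Y]}-[\nabla_X,\nabla_Y]$). Since $A$ is algebraic in $\theta$ and $g$ with $D^g g=0$, the first-order contributions reduce to terms in $D^g\theta$ and the quadratic contributions to terms in $\theta\otimes\theta$ and $\|\theta\|^2 g$. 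Contracting into $Ric^W(X,Y)=\sum_i g(R^W_{e_i,X}e_i,Y)$ yields a general relation of the schematic form
\[
Ric^W = Ric^g + a\,(D^g\theta)^{sym} + b\,(\delta^g\theta)\,g - \tfrac12\|\theta\|^2 g + \tfrac12\,\theta\otimes\theta,
\]
the antisymmetric first-order part $d\theta$ landing in $\widetilde{Ric}^W-Ric^W$ and dropping out of the symmetric tensor $Ric^W$. The quadratic coefficients $-\tfrac12,\tfrac12$ are forced already by consistency of the trace with~(\ref{conformal_sca}).

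Next I would invoke the hypotheses. Gauduchon-ness gives $\delta^g\theta=0$, killing the $b$-term, and Lemma~\ref{killing} shows $\theta^\sharp$ is Killing, so $\mathcal{L}_{\theta^\sharp}g = 2(D^g\theta)^{sym}=0$, killing the $a$-term. What remains is precisely the first identity $Ric^W = Ric^g - \tfrac12(\|\theta\|^2 g - \theta\otimes\theta)$. Note it is the upgrade from the partial hypothesis $(D^g\theta)^{sym,J,-}=0$ to the full vanishing $(D^g\theta)^{sym}=0$, supplied by Lemma~\ref{killing}, that makes the derivative terms disappear entirely.

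For the second identity I would split $Ric^g=(Ric^g)^{J,+}+(Ric^g)^{J,-}$ and $\theta\otimes\theta=(\theta\otimes\theta)^{J,+}+(\theta\otimes\theta)^{J,-}$ in the first identity and substitute~(\ref{ric-expression}) for $(Ric^g)^{J,+}$. The two $(\theta\otimes\theta)^{J,+}$ contributions cancel, leaving
\[
Ric^W = \Big(\tfrac{s^H}{4}-\tfrac14\|N\|^2-\tfrac12\|\theta\|^2\Big) g + (Ric^g)^{J,-} + \tfrac12(\theta\otimes\theta)^{J,-},
\]
so it only remains to identify the scalar coefficient with $\tfrac{s^W}{4}$. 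Taking the trace of~(\ref{ric-expression}) (and using that a $J$-anti-invariant symmetric $2$-tensor is trace-free, since $\{Je_i\}$ is again orthonormal) gives $s^g = s^H-\|N\|^2-\tfrac12\|\theta\|^2$; combining with~(\ref{conformal_sca}) at $\delta^g\theta=0$ yields $s^W = s^g-\tfrac32\|\theta\|^2 = s^H-\|N\|^2-2\|\theta\|^2$, which is exactly four times the coefficient.

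Finally, for~(\ref{E-W-condition}) I would observe that $g$ is $J$-invariant whereas $(Ric^g)^{J,-}+\tfrac12(\theta\otimes\theta)^{J,-}$ is $J$-anti-invariant, hence trace-free and $g$-orthogonal to $g$. Thus $Ric^W$ is proportional to $g$ if and only if this anti-invariant remainder vanishes, i.e. $(Ric^g)^{J,-} = -\tfrac12(\theta\otimes\theta)^{J,-}$, giving the Einstein--Weyl criterion. The main obstacle is the first step: performing the Weyl--Ricci contraction from~(\ref{weyl_con}) while keeping the sign convention straight and checking that no stray $d\theta$ survives in the symmetric part $Ric^W$; once the general relation is in hand, the remaining steps are purely algebraic specializations driven by Lemma~\ref{killing} and Lemma~\ref{expression-ricci}.
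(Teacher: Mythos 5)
Your proposal is correct and follows essentially the same route the paper intends: expanding the curvature of $D^W$ from~(\ref{weyl_con}), using the Gauduchon condition $\delta^g\theta=0$ together with Lemma~\ref{killing} (so $(D^g\theta)^{sym}=0$) to kill all first-order terms, substituting Lemma~\ref{expression-ricci} for $(Ric^g)^{J,+}$, and reading off the Einstein--Weyl criterion from the $J$-invariant/$J$-anti-invariant splitting. The only slight imprecision is the remark that the quadratic coefficients $-\tfrac12,\tfrac12$ are ``forced'' by the trace identity~(\ref{conformal_sca}) --- the trace gives one linear relation between two coefficients, so they must actually come from the direct computation you outline --- but this does not affect the argument.
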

    
\subsection{The main theorems}
    
    Combining the equations ~(\ref{diff_ricci}) and ~(\ref{diff_sca}) we obtain
    \begin{equation}\label{diff_ricci+sca}
        \left(\rho^\star\right)^{sym}-\left(Ric^g\right)^{J,+}=-\frac{1}{4}\left(2\delta^g\theta+\|\theta\|^2-2\|N\|^2\right)g.
    \end{equation}
    We have already computed $\left(Ric^g\right)^{J,+}$ in ~(\ref{ric-expression}); now we want to compute $\rho^\star\left(\theta^\sharp,\theta^\sharp\right)$ so that we can evaluate ~(\ref{diff_ricci+sca}) on $\left(\theta^\sharp,\theta^\sharp\right)$ and obtain a condition on the almost-Hermitian structure when $g$ is second-Chern--Einstein. We will assume that the Riemannian dual of $\theta$ is a Killing vector field.

    \begin{lem}\label{gen_star}
        Suppose that $(M,{g},J)$ is a $4$-dimensional compact almost-Hermitian manifold. Suppose that the Riemannian dual $\theta^\sharp$ of the Lee form $\theta$ is a Killing vector field. Then,
        \begin{equation*}
            \rho^\star(\theta^\sharp,X)=-\frac{1}{2}\left(d\theta\right)^{J,-}(\theta^\sharp,X)+g\left(d\theta,N_{X}\right),
        \end{equation*}
        for any vector field $X$, where $N_{X}(Y,Z)=g(N(Y,Z),X).$
    \end{lem}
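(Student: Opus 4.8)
The plan is to convert the curvature contraction defining $\rho^\star$ into a purely algebraic expression in $d\theta$ and $N$. The two ingredients that make this possible are the Killing hypothesis on $\theta^\sharp$ (which lets me trade the Riemannian curvature for covariant derivatives of $d\theta$) and the dimension-$4$ identity $g(d\theta,F)=0$ (which lets me trade those derivatives back for the \emph{algebraic} tensor $D^g_{JX}F$).

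First I would record the consequences of $\theta^\sharp$ being Killing. Setting $A=D^g\theta^\sharp$, skew-symmetry of $A$ means that, as a $2$-form, $g(AY,Z)=\tfrac12 d\theta(Y,Z)$. Moreover the Ricci identity, together with $D^g$ being torsion-free, gives the pointwise formula $R^g_{X,Y}\theta^\sharp=(D^g_YA)X-(D^g_XA)Y$. Substituting this into $\rho^\star(\theta^\sharp,X)=\tfrac12\sum_i g(R^g_{e_i,Je_i}\theta^\sharp,JX)$ writes $\rho^\star(\theta^\sharp,X)$ as a contraction of $D^g(d\theta)$ against the frame and $J$; relabelling $e_i\leftrightarrow Je_i$ shows the two resulting sums are opposite, so $\rho^\star(\theta^\sharp,X)=-\sum_i(D^g_{e_i}\beta)(Je_i,JX)$ with $\beta=\tfrac12 d\theta$.

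Next I would exploit that $\beta$ is closed. Writing out $\sum_i d\beta(e_i,Je_i,JX)=0$ and feeding in the previous identity yields $\rho^\star(\theta^\sharp,X)=\tfrac12\,g\!\left(D^g_{JX}(d\theta),F\right)$. Here the key simplification enters: in real dimension $4$ the Lee form satisfies $dF=\theta\wedge F$, hence $d\theta\wedge F=0$ and therefore $g(d\theta,F)\equiv 0$ (this vanishing is already used in the proof of Corollary~\ref{cor_2-chern}). Differentiating $g(d\theta,F)=0$ in the direction $JX$ and using that $D^g$ is metric gives $g(D^g_{JX}(d\theta),F)=-g(d\theta,D^g_{JX}F)$, so that
\[\rho^\star(\theta^\sharp,X)=-\tfrac12\,g\!\left(d\theta,D^g_{JX}F\right),\]
an expression carrying no surviving derivatives of $d\theta$.

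It then remains to expand $D^g_{JX}F$, where $(D^g_{JX}F)(Y,Z)=g((D^g_{JX}J)Y,Z)$. Using~(\ref{weyl_con}) I would split $D^g_{JX}J$ into its Weyl part $D^W_{JX}J$ and an explicit Lee-form part. By~(\ref{int_weyl}) the Weyl part is Nijenhuis-valued, equal to $2g(N(Y,JZ),JX)$; moving $J$ through $N$ via the antilinearity $N(Y,JZ)=-JN(Y,Z)$ turns its pairing with $d\theta$ into exactly $+\,g(d\theta,N_X)$. The Lee-form part, paired with $d\theta$ and projected onto $J$-types, collapses to $-\tfrac12(d\theta)^{J,-}(\theta^\sharp,X)$, the $J$-invariant contributions dropping out consistently with $g(d\theta,F)=0$. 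The main obstacle is precisely this last bookkeeping: cleanly separating $D^g_{JX}F$ into its Nijenhuis and Lee-form components and tracking every $J$-projection and sign, so that only the two advertised terms remain and all $J$-invariant pieces cancel. Everything else in the argument is forced once the reduction $\rho^\star(\theta^\sharp,X)=-\tfrac12 g(d\theta,D^g_{JX}F)$ is in hand.
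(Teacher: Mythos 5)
Your proof is correct, and it follows a genuinely different route from the paper's. The paper first derives, for an \emph{arbitrary} $1$-form $\alpha$, an identity expressing $\left(\delta^g(D^g\alpha)^{J,+}-\delta^g(D^g\alpha)^{J,-}\right)(X)$ as $\rho^\star(\alpha^\sharp,X)-D^g\alpha(J\theta^\sharp,JX)-\sum_{i}D^g\alpha(Je_i,(D^g_{e_i}J)X)$; for $\alpha=\theta$ with $\theta^\sharp$ Killing it kills the left-hand side by noting $D^g\theta=\tfrac12 d\theta$, that $g(d\theta,F)=0$ makes $(d\theta)^{J,+}=(d\theta)^{-}$ and $(d\theta)^{J,-}=(d\theta)^{+}$, and then using $\delta^g=-\ast_g d\ast_g$ together with $d(d\theta)=0$; finally it expands the correction sum via the formula for $D^gJ$, which forces an appeal to the cyclic identity~(\ref{cycle_n}) to turn the Nijenhuis contraction into $g(d\theta,N_X)$. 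You bypass codifferentials and self-duality entirely: the Ricci identity for the Killing field (correctly written for the paper's sign convention $R^g_{X,Y}=D^g_{[X,Y]}-[D^g_X,D^g_Y]$) gives $\rho^\star(\theta^\sharp,X)=-\sum_i(D^g_{e_i}\beta)(Je_i,JX)$ with $\beta=\tfrac12 d\theta$; contracting $d\beta=0$ against $e_i,Je_i,JX$ turns this into $\tfrac12 g(D^g_{JX}(d\theta),F)$; and differentiating the pointwise identity $g(d\theta,F)=0$ moves the derivative onto $F$. The bookkeeping you flag as the main obstacle does close: equations~(\ref{weyl_con}) and~(\ref{int_weyl}) give $D^g_{JX}F=\tfrac12(JX)^\flat\wedge(J\theta^\sharp)^\flat+\tfrac12\,\theta\wedge X^\flat-2N_X$, so that $-\tfrac12 g(d\theta,D^g_{JX}F)=\tfrac14 d\theta(J\theta^\sharp,JX)-\tfrac14 d\theta(\theta^\sharp,X)+g(d\theta,N_X)=-\tfrac12(d\theta)^{J,-}(\theta^\sharp,X)+g(d\theta,N_X)$, as required. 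Your route is leaner: no Hodge star, no self-dual/anti-self-dual splitting, no cyclic identity (the Nijenhuis term pairs off at once), and it makes transparent that the statement is purely pointwise (in fact neither proof uses compactness). The paper's route, in exchange, produces the general-$\alpha$ identity, which records exactly which terms survive when $\theta^\sharp$ fails to be Killing, and makes the dimension-four self-duality mechanism explicit. Both arguments ultimately rest on the same dimension-four inputs: the expression~(\ref{int_weyl}) of $D^WJ$ through $N$ and the identity $g(d\theta,F)=0$.
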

    \begin{proof}
        Let $\alpha$ be a $1$-form. Then
        \begin{align*}
            \left(\delta^g(D^g\alpha)^{J,+}-\delta^g(D^g\alpha)^{J,-}\right)(X)&=-\sum_{i=1}^4\left(D^g_{e_i}\left((D^g\alpha)^{J,+}-(D^g\alpha)^{J,-}\right)\right)(e_i,X),\\
            & \begin{multlined}
                =\sum_{i=1}^4 -D^g_{e_i}\left(D^g_{Je_i}\alpha(JX)\right) +D^g\alpha(JD^g_{e_i}e_i,JX)\\+D^g\alpha(Je_i,JD^g_{e_i}X),
            \end{multlined} \\
            & \begin{multlined}
                =\sum_{i=1}^4-D^g_{e_i}\left(D^g_{Je_i}\alpha(JX)\right) + D^g\alpha(D^g_{e_i}(Je_i),JX) \\ + D^g\alpha(Je_i,D^g_{e_i}(JX)) - D^g\alpha((D^g_{e_i}J)e_i,JX) \\ - D^g\alpha(Je_i,\left(D^g_{e_i}J\right)X),
                \end{multlined}\\
            & \begin{multlined}
                =\sum_{i=1}^4-\left(D^g_{e_i}\left(D^g_{Je_i}\alpha\right)\right)(JX) + D^g\alpha(D^g_{e_i}(Je_i),JX) \\ - D^g\alpha((D^g_{e_i}J)e_i,JX) - D^g\alpha(Je_i,\left(D^g_{e_i}J\right)X),
            \end{multlined}\\
            &=\sum_{i=1}^4\frac{1}{2}g(R^g_{e_i,Je_i}\alpha^\sharp,JX)-D^g\alpha(Je_i,\left(D^g_{e_i}J\right)X)-D^g\alpha(J\theta^\sharp,JX),\\
            &=\rho^\star(\alpha^\sharp,X)-D^g\alpha(J\theta^\sharp,JX)-\sum_{i=1}^4D^g\alpha(Je_i,\left(D^g_{e_i}J\right)X).
        \end{align*}
        On the other hand, using that $g(d\theta,F)=0$ and $\delta^g=-\ast_gd\,\ast_g$, we have
        \begin{align*}
            \delta^g(D^g\theta)^{J,+}-\delta^g(D^g\theta)^{J,-}&=\frac{1}{2}\delta^g(d\theta)^{J,+}-\frac{1}{2}\delta^g(d\theta)^{J,-},\\
            &=\frac{1}{2}\delta^g(d\theta)^{-}-\frac{1}{2}\delta^g(d\theta)^{+},\\
            &=-\frac{1}{2}\ast_gd\ast_g(d\theta)^{-}+\frac{1}{2}\ast_gd\ast_g(d\theta)^{+},\\
            &=\frac{1}{2}\ast_gd(d\theta)^{-}+\frac{1}{2}\ast_gd(d\theta)^{+},\\
            &=\frac{1}{2}\ast_gd\left((d\theta)^{-}+d(d\theta)^{+}\right)=\frac{1}{2}\ast_gd(d\theta)=0.
        \end{align*}
        We deduce then
        \begin{equation}\label{general-form}
            \rho^\star(\theta^\sharp,X)=D^g\theta(J\theta^\sharp,JX)+\sum_{i=1}^4D^g\theta(Je_i,\left(D^g_{e_i}J\right)X).
        \end{equation}
        Now, we would like to compute the second term in the right hand side of~(\ref{general-form}). We first recall that
        \begin{equation*}
            \left(D^g_XJ\right)Y=\frac{1}{2}g(X,Y)J\theta^\sharp+\frac{1}{2}\theta(JY)X+\frac{1}{2}g(JX,Y)\theta^\sharp-\frac{1}{2}\theta(Y)JX+2\left(g(N(Y,\cdot),JX)\right)^\sharp,
        \end{equation*}
        which can be easily deduced from~(\ref{weyl_con}) and~(\ref{int_weyl}). Hence,
        \begin{align}
            \sum_{i=1}^4D^g\theta(Je_i,\left(D^g_{e_i}J\right)X)=&\sum_{i=1}^4D^g_{Je_i}\theta\left(\frac{1}{2}g(e_i,X)J\theta^\sharp\right)+D^g_{Je_i}\theta\left(\frac{1}{2}\theta(JX)e_i\right)\nonumber\\
            &+D^g_{Je_i}\theta\left(\frac{1}{2}g(Je_i,X)\theta^\sharp\right)-D^g_{Je_i}\theta\left(\frac{1}{2}\theta(X)Je_i\right)+2g(N(X,D^g_{Je_i}\theta^\sharp),Je_i),\nonumber\\
            =&\,\frac{1}{2}g(D^g_{JX}\theta,J\theta)+\frac{1}{2}g(D^g_X\theta,\theta)+2\sum_{i=1}^4g(N(X,D^g_{Je_i}\theta^\sharp),Je_i),\label{eq-last}
        \end{align}
        where we use the fact that $g(d\theta,F)=\delta^g\theta=0.$ Moreover, using~(\ref{cycle_n}), we compute the third term in~(\ref{eq-last})
        \begin{align*}
            \sum_{i=1}^4g(N(X,D^g_{Je_i}\theta^\sharp),Je_i)&=\sum_{i,j=1}^4g(D_{Je_i}\theta^\sharp,e_j)g(N(X,e_j)Je_i),\\
            &=\sum_{i,j=1}^4-g(D^g_{Je_i}\theta^\sharp,e_j)g(N(e_j,Je_i),X)-(D^g_{Je_i}\theta^\sharp,e_j)g(N(Je_i,X),e_j),\\
            &=\sum_{i,j=1}^4-g(D^g_{Je_i}\theta^\sharp,e_j)g(N(e_j,Je_i),X)-(D^g_{e_j}\theta^\sharp,Je_i)g(N(X,Je_i),e_j).
        \end{align*}
        Thus,
        \begin{align*}
            \sum_{i=1}^4g(N(X,D^g_{Je_i}\theta^\sharp),Je_i)&=-\frac{1}{2}\sum_{i,j=1}^4g(D^g_{Je_i}\theta^\sharp,e_j)g(N(e_j,Je_i),X),\\
            &=\frac{1}{2}\sum_{i,j=1}^4g(D^g_{e_i}\theta^\sharp,e_j)g(N(e_i,e_j),X),\\
            &=\frac{1}{2}g(d\theta,N_X).
        \end{align*}
        From~(\ref{eq-last}), we deduce that
        \begin{align*}
            \sum_{i=1}^4D^g\theta(Je_i,\left(D^g_{e_i}J\right)X)&=\frac{1}{2}g(D^g_{JX}\theta,J\theta)+\frac{1}{2}g(D^g_X\theta,\theta)+g(d\theta,N_X).
        \end{align*}
        Finally, from~(\ref{general-form}) and~(\ref{eq-last}), we conclude that
        \begin{align*}
            \rho^\star(\theta^\sharp,X)&=D^g\theta(J\theta^\sharp,JX)+\frac{1}{2}g(D^g_{JX}\theta,J\theta)+\frac{1}{2}g(D^g_X\theta,\theta)+g(d\theta,N_X),\\
            &=\frac{1}{2}D^g\theta(J\theta^\sharp,JX)+\frac{1}{2}g(D^g_X\theta,\theta)+g(d\theta,N_X),\\
            &=-\frac{1}{2}(d\theta)^{J,-}(\theta^\sharp,X)+g(d\theta,N_X).
        \end{align*}
        The lemma follows.
    \end{proof}

    As consequences of Lemma~\ref{gen_star}, we obtain
    \begin{cor}\label{loc-conf-sym}
        Suppose that $(M,{g},J)$ is a $4$-dimensional compact locally conformally symplectic manifold. Suppose that the Riemannian dual $\theta^\sharp$ of the Lee form $\theta$ is a Killing vector field. Then,
        $$\rho^\star(\theta^\sharp,X)=0,$$
        for any vector field $X.$
    \end{cor}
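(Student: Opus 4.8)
The plan is to obtain this immediately from Lemma~\ref{gen_star}, exploiting the fact, recalled in the introduction, that in real dimension $4$ the locally conformally symplectic condition is precisely $d\theta=0$. Since the present hypotheses (compactness, dimension $4$, and $\theta^\sharp$ Killing) are exactly those of Lemma~\ref{gen_star}, I may apply that lemma directly to rewrite $\rho^\star(\theta^\sharp,X)$ in terms of $d\theta$.

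First I would record the identity furnished by Lemma~\ref{gen_star}, namely
$$\rho^\star(\theta^\sharp,X)=-\frac{1}{2}(d\theta)^{J,-}(\theta^\sharp,X)+g(d\theta,N_X),$$
and observe that both terms on the right-hand side are manifestly linear in $d\theta$: the first is, up to sign and a scalar factor, the $J$-anti-invariant part of $d\theta$ evaluated on the pair $(\theta^\sharp,X)$, while the second is the pointwise $g$-inner product of the $2$-form $d\theta$ with the $2$-form $N_X(Y,Z)=g(N(Y,Z),X)$. Consequently each summand vanishes as soon as $d\theta=0$.

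Invoking the LCS hypothesis $d\theta=0$, I would then conclude that $(d\theta)^{J,-}=0$ and $g(d\theta,N_X)=0$, so that $\rho^\star(\theta^\sharp,X)=0$ for every vector field $X$, as claimed.

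I do not expect any genuine obstacle here: the entire analytic content is carried by Lemma~\ref{gen_star}, and the corollary is simply its specialization to the case $d\theta=0$. The only point meriting a word of care is to use the correct, four-dimensional form of the LCS condition, $d\theta=0$, rather than the higher-dimensional condition $dF=\theta\wedge F$; since the statement explicitly restricts to dimension $4$, this identification is automatic.
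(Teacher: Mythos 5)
Your proposal is correct and coincides with the paper's own (implicit) argument: the paper states Corollary~\ref{loc-conf-sym} as an immediate consequence of Lemma~\ref{gen_star}, exactly as you do, with the only input being that the four-dimensional LCS condition $d\theta=0$ kills both terms $-\frac{1}{2}(d\theta)^{J,-}(\theta^\sharp,X)$ and $g(d\theta,N_X)$ on the right-hand side. Nothing further is needed.
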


    We also extend~\cite[Lemma 2]{MR1477631} to the almost-Hermitian setting.
    \begin{cor}\label{ric_star_lee}
        Suppose that $(M,{g},J)$ is a compact $4$-dimensional almost-Hermitian manifold. Suppose that the Riemannian dual $\theta^\sharp$ of the Lee form $\theta$ is a Killing vector field. Then, 
        \begin{equation*}
            \rho^\star(\theta^\sharp,\theta^\sharp)=g(d\theta,N_{\theta^\sharp}),
        \end{equation*}
        where $N_{\theta^\sharp}(X,Y)=g(N(X,Y),\theta^\sharp).$
    \end{cor}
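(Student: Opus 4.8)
The plan is to obtain Corollary~\ref{ric_star_lee} as a direct specialization of Lemma~\ref{gen_star}, since all of the substantive work has already been carried out in proving that lemma. Lemma~\ref{gen_star} gives, for every vector field $X$,
\[
\rho^\star(\theta^\sharp,X)=-\frac{1}{2}(d\theta)^{J,-}(\theta^\sharp,X)+g(d\theta,N_X),
\]
so the natural move is simply to set $X=\theta^\sharp$ and argue that the first term on the right-hand side drops out.

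The one observation to record is that $(d\theta)^{J,-}(\theta^\sharp,\theta^\sharp)=0$. Indeed, $(d\theta)^{J,-}$ is the $J$-anti-invariant projection of the $2$-form $d\theta$, and this projection preserves antisymmetry: writing $(d\theta)^{J,-}(X,Y)=\tfrac{1}{2}\big(d\theta(X,Y)-d\theta(JX,JY)\big)$, one checks immediately that swapping $X$ and $Y$ changes the sign. Hence $(d\theta)^{J,-}$ is again a $2$-form, and any $2$-form vanishes when evaluated on a repeated argument $(\theta^\sharp,\theta^\sharp)$. Substituting $X=\theta^\sharp$ into the formula of Lemma~\ref{gen_star} therefore yields
\[
\rho^\star(\theta^\sharp,\theta^\sharp)=g(d\theta,N_{\theta^\sharp}),
\]
which is exactly the claim, with $N_{\theta^\sharp}(X,Y)=g(N(X,Y),\theta^\sharp)$ as in the statement.

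I do not expect any genuine obstacle here: the Weitzenböck-type computation, the use of the Killing condition on $\theta^\sharp$, and the appeal to the cyclic Nijenhuis identity~(\ref{cycle_n}) are all internal to the proof of Lemma~\ref{gen_star}, and the corollary adds nothing beyond the trivial vanishing of an alternating form on the diagonal. The only thing worth double-checking is the antisymmetry of the $J$-anti-invariant part under the conventions fixed in Section~\ref{Prel}, so that the specialization is justified; once that is noted, the proof is a single line.
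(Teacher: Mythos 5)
Your proposal is correct and coincides with the paper's own route: the paper states Corollary~\ref{ric_star_lee} as an immediate consequence of Lemma~\ref{gen_star}, exactly by taking $X=\theta^\sharp$ and using that $(d\theta)^{J,-}$ is still an alternating $2$-form (under the paper's convention $(d\theta)^{J,-}(X,Y)=\tfrac{1}{2}\bigl(d\theta(X,Y)-d\theta(JX,JY)\bigr)$), hence vanishes on the diagonal. Nothing further is needed.
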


    Notice that when $J$ is integrable $\rho^\star(\theta^\sharp,\theta^\sharp)=0.$ As a matter of fact, this condition simplifies in a natural way the problem, thus we will now prove our main theorems with the assumption that $d\theta = 0$ (Theorem~\ref{thm1}) or $N_{\theta^\sharp}=0$ (Theorem~\ref{N-theta}). 

    \begin{thm}\label{thm1}
        Suppose that $(M,\tilde{g},J)$ is a $4$-dimensional compact locally conformally symplectic manifold and $\tilde{g}$ is a second-Chern--Einstein metric. Suppose that the Gauduchon metric $g$ in the conformal class $[\tilde{g}]$ satisfies $(D^g\theta)^{sym,J,-}=0$, where $\theta$ is the Lee form of $g$. Then, either
        \begin{enumerate}
            \item $(M,g,J)$ is a second-Chern--Einstein almost-K\"ahler manifold, or\\
            \item $\theta$ is $D^g$-parallel and the conformal scalar curvature $s^W$ is non-positive and the $\star$-scalar curvature $s^\star$ is a positive constant. Moreover, $s^W$ is identically zero if and only if $J$ is integrable and so $(M,J)$ is a Hopf surface as described in~\cite[Theorem 2]{MR1477631}. Furthermore, if $s^W$ is nowhere zero then $\chi=\sigma=0,$ where $\chi$ and $\sigma$ are the Euler class and signature of $M$ respectively.
        \end{enumerate}
    \end{thm}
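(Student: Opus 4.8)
The plan is to work throughout with the Gauduchon representative $g$ of the conformal class $[\tilde g]$, which is legitimate since the second-Chern--Einstein condition is conformally invariant: $g$ is itself second-Chern--Einstein and by hypothesis satisfies $(D^g\theta)^{sym,J,-}=0$. First I would apply Lemma~\ref{killing} to conclude that $\theta^\sharp$ is a Killing field of $g$, i.e. $(D^g\theta)^{sym}=0$. Since $M$ is locally conformally symplectic we have $d\theta=0$, and the antisymmetric part of $D^g\theta$ is exactly $\frac12 d\theta$; hence $D^g\theta=0$ and $\theta$ is $D^g$-parallel. Consequently $\|\theta\|^2$ is constant, which forces the dichotomy: either $\theta\equiv 0$, in which case $dF=\theta\wedge F=0$ and the (still second-Chern--Einstein) pair $(M,g,J)$ is almost-K\"ahler, giving alternative (1); or $\theta$ has constant positive length and is therefore nowhere zero, the setting of alternative (2).

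From now on assume $\theta\not\equiv0$. The key step is to extract the pointwise identity
\begin{equation*}
s^H+\|N\|^2=2\|\theta\|^2 .
\end{equation*}
To obtain it I would evaluate the combined Ricci identity~(\ref{diff_ricci+sca}) on the pair $(\theta^\sharp,\theta^\sharp)$. On the left, $(\rho^\star)^{sym}(\theta^\sharp,\theta^\sharp)=\rho^\star(\theta^\sharp,\theta^\sharp)$ vanishes by Corollary~\ref{loc-conf-sym} (equivalently Corollary~\ref{ric_star_lee} with $d\theta=0$), while $(Ric^g)^{J,+}(\theta^\sharp,\theta^\sharp)$ is read off from Lemma~\ref{expression-ricci}; here I use $g(\theta^\sharp,\theta^\sharp)=\|\theta\|^2$ and $(\theta\otimes\theta)^{J,+}(\theta^\sharp,\theta^\sharp)=\frac12\|\theta\|^4$, the latter because $\theta(J\theta^\sharp)=g(\theta^\sharp,J\theta^\sharp)=0$ by $J$-skew-symmetry. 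After inserting $\delta^g\theta=0$ and dividing by $\|\theta\|^2\neq0$, the displayed identity drops out.

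Next I would feed this into the scalar relations of Section~\ref{Prel}. Taking the $g$-trace of~(\ref{ric-expression}) gives $s^g=s^H-\|N\|^2-\frac12\|\theta\|^2$; substituting into~(\ref{conformal_sca}) with $\delta^g\theta=0$ yields $s^W=s^H-\|N\|^2-2\|\theta\|^2=-2\|N\|^2\le 0$, and substituting into~(\ref{diff_sca}) yields $s^\star=\frac12\|\theta\|^2$, a positive constant since $\|\theta\|$ is constant. In particular $s^W\equiv0$ if and only if $N\equiv0$, i.e. $J$ is integrable; in that case $(M,g,J)$ is Hermitian and LCS, hence LCK, and second-Chern--Einstein with parallel Lee form, so \cite[Theorem 2]{MR1477631} identifies $(M,J)$ with a Hopf surface.

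Finally the topology. Since $\theta^\sharp$ is a nowhere-vanishing vector field, the Poincar\'e--Hopf theorem gives $\chi=0$. For the signature I would use that $\theta$ is a nowhere-vanishing closed $1$-form, so by Tischler's theorem $M$ fibers smoothly over $S^1$; the total space of a fibration of a closed oriented $4$-manifold over $S^1$ has metabolic intersection form (the image of $H^1$ of the fiber under the Wang connecting homomorphism is a Lagrangian for the cup product), whence $\sigma=0$. I expect this signature vanishing to be the main obstacle: everything preceding it is essentially forced by the lemmas of Section~\ref{Sec_2}, whereas $\sigma=0$ requires an external topological input. An alternative that avoids fibration theory is to combine $\chi=0$ with $c_1^2=2\chi+3\sigma$ and prove $c_1^2=0$ via a Gauss--Bonnet/signature curvature integral together with the pointwise identities above, but the Tischler argument is cleaner and uniform.
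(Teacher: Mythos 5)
Your proposal is correct, and its geometric core is precisely the paper's own argument: Lemma~\ref{killing} together with $d\theta=0$ gives $D^g\theta=0$ and the dichotomy $\theta\equiv 0$ (almost-K\"ahler case) versus $\theta$ of constant positive length; evaluating (\ref{diff_ricci+sca}) at $(\theta^\sharp,\theta^\sharp)$ and combining Corollary~\ref{ric_star_lee} (with $d\theta=0$) with Lemma~\ref{expression-ricci} yields $\left(s^H+\|N\|^2-2\|\theta\|^2\right)\|\theta\|^2=0$, and the same substitutions as in the paper give $s^W=-2\|N\|^2\le 0$, $s^\star=\tfrac12\|\theta\|^2$ a positive constant, and the Hopf-surface identification when $s^W\equiv 0$. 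Where you genuinely differ is the topological step. The paper proves $\chi=0$ via Hopf's theorem \cite{MR1512316} applied to the non-trivial Killing field of constant length, and gets $\sigma=0$ only after invoking Armstrong's identity \cite[Lemma 3]{MR1604803}, which is exactly where the hypothesis ``$s^W$ nowhere zero'' enters and gives $5\chi+6\sigma=0$. You instead use Poincar\'e--Hopf for $\chi=0$ and, for $\sigma=0$, Tischler's theorem (the closed nowhere-vanishing form $\theta$ makes $M$ fiber smoothly over $S^1$) plus the classical vanishing of the signature of a closed oriented $4$-manifold fibering over $S^1$; your Wang-sequence justification of that vanishing is sound (the image of the connecting homomorphism equals $\ker\left(H^2(M)\to H^2(F)\right)$, is isotropic, and coincides with its orthogonal complement by Poincar\'e duality on the fiber). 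This route is valid and in fact yields a stronger conclusion: $\chi=\sigma=0$ throughout case (2), i.e.\ whenever $\theta\not\equiv 0$, with no assumption on the zero set of $s^W$. An even shorter variant in the same spirit: since $\theta$ is $D^g$-parallel and non-zero, $g$ is locally a Riemannian product of a line with a $3$-manifold, so the Chern--Gauss--Bonnet and Hirzebruch signature integrands vanish pointwise, giving $\chi=\sigma=0$ at once.
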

    \begin{proof}
        From Lemma~\ref{killing}, $\theta^\sharp$ is a Killing vector field. Since $d\theta=0$, it follows that $\theta$ is $D^g$-parallel and so $\theta^\sharp$ has a constant length. Combining Corollary~\ref{ric_star_lee} and Corollary~\ref{ric-expression} and (\ref{diff_ricci+sca}), we obtain
        \begin{equation}
            \left(s^H+\|N\|^2-2\|\theta\|^2\right)\|\theta\|^2=0.
        \end{equation}
        Hence either $\theta=0$ or $s^H=2\|\theta\|^2-\|N\|^2.$ Now, if $s^H=2\|\theta\|^2-\|N\|^2$ then $s^g=\frac{3}{2}\|\theta\|^2-2\|N\|^2$. Hence, from~(\ref{conformal_sca}), we get that
        $$s^W=-2\|N\|^2.$$
        Thus $s^W\equiv 0$ if and only if $J$ is integrable. Moreover, if $s^W$ is nowhere zero then $5\chi+6\sigma=0$ using~\cite[Lemma 3]{MR1604803}. The existence of a non-trivial Killing vector field of constant length implies $\chi=0$ by Hopf theorem~\cite{MR1512316} hence $\chi=\sigma=0$. Furthermore, using~(\ref{diff_sca}) we have that $s^\star=s^g-\|\theta\|^2+2\|N\|^2=\frac{3}{2}\|\theta\|^2-2\|N\|^2-\|\theta\|^2+2\|N\|^2=\frac{1}{2}\|\theta\|^2$ and so $s^\star$ is constant.
    \end{proof}

    \begin{rem}
        We recall that there are many restrictions to the existence of a non-zero Killing vector field of constant length on a Riemannian manifold, see for example~\cite{MR2442533,MR3422911}
    \end{rem}

    \begin{rem}
        When $J$ is integrable, if $\theta$ is $D^g$-parallel (the metric $g$ is called Vaisman) then $\theta^\sharp$ and $J\theta^\sharp$ are both Killing and real holomorphic vector fields~\cite{MR1481969}.  
    \end{rem}

    Gauduchon proved in~\cite{MR1363825} (in dimension $3$ see also~\cite{MR1200290}) that if the conformal scalar curvature of a compact Einstein--Weyl manifold is non-positive but non identically zero then the Gauduchon metric is an almost-K\"ahler Riemannian Einstein metric (i.e. $Ric^g$ is proportional to $g$). We can then deduce the following 
    \begin{cor}~\cite[Theorem 3]{MR1363825}\cite[Corollary 4.2]{MR1727302}
        Let $(M,\tilde{g},J)$ be a $4$-dimensional compact locally conformally symplectic manifold. Suppose that the Gauduchon metric $g\in[\tilde{g}]$ is a second-Chern--Einstein and an Einstein--Weyl metric. Then $(M,g,J)$ is either an almost-K\"ahler Riemannian Einstein manifold or a Hopf surface as described in~\cite{MR1477631}.
    \end{cor}
    \begin{proof}
        It follows from Theorem~\ref{thm1} that either $g$ is almost-K\"ahler or $s^W$ is non-positive. The corollary follows from the fact that if the metric $g$ is an almost-K\"ahler Einstein--Weyl metric then $g$ is a Riemannian Einstein metric~\cite{MR1363825}.
    \end{proof}

    The vanishing of $N_{\theta^\sharp}$ means that $\theta^\sharp$ is $g$-orthogonal to $Span(N)$, which is the distribution spanned by all the vector fields $N(X,Y).$ Moreover, if $N_{\theta^\sharp}=0$ then $N_{J\theta^\sharp}=0$. In fact, in real dimension $4$, at each point the dimension of $Span(N)$ is equal to $0$ or $2$~\cite{Cahen:2020vv} (see also~\cite{MR840830} for more details). A similar proof to Theorem~\ref{thm1} gives the following  

    \begin{thm}\label{N-theta}
        Suppose that $(M,\tilde{g},J)$ is a $4$-dimensional compact almost-Hermitian manifold and $\tilde{g}$ is a second-Chern--Einstein metric. Suppose that the Gauduchon metric $g$ in the conformal class $[\tilde{g}]$ satisfies $(D^g\theta)^{sym,J,-}=0$ and $N_{\theta^\sharp}=0$, where $\theta$ is the Lee form of $g$. Then, either \begin{enumerate} 
            \item $(M,g,J)$ is a second-Chern--Einstein almost K\"ahler manifold, or\\
            \item $\theta^\sharp$ is a non-zero Killing vector field and the conformal scalar curvature $s^W$ is non-positive and the $\star$-scalar curvature $s^\star$ is positive. Moreover, $s^W$ is identically zero if and only if $J$ is integrable so $(M,J)$ is a Hopf surface as described in~\cite{MR1477631}. In addition, if $s^W$ is nowhere zero then $\chi=\sigma=0.$ Furthermore, $s^\star$ is a (positive) constant if and only if $\theta^\sharp$ is a non-zero Killing vector field of constant length.
        \end{enumerate}
    \end{thm}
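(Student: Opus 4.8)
The plan is to run the argument of Theorem~\ref{thm1} almost verbatim, using the hypothesis $N_{\theta^\sharp}=0$ wherever $d\theta=0$ was used there. First, $(D^g\theta)^{sym,J,-}=0$ together with Lemma~\ref{killing} makes $\theta^\sharp$ a Killing vector field. Since $N_{\theta^\sharp}=0$, Corollary~\ref{ric_star_lee} gives $\rho^\star(\theta^\sharp,\theta^\sharp)=g(d\theta,N_{\theta^\sharp})=0$, which is precisely the vanishing that Corollary~\ref{loc-conf-sym} supplied in the locally conformally symplectic case. I would then evaluate~(\ref{diff_ricci+sca}) on $(\theta^\sharp,\theta^\sharp)$, insert the formula for $(Ric^g)^{J,+}$ from Lemma~\ref{expression-ricci}, and use $\theta(J\theta^\sharp)=0$ and $\delta^g\theta=0$; the same cancellation as in Theorem~\ref{thm1} should yield
\begin{equation*}
    \left(s^H+\|N\|^2-2\|\theta\|^2\right)\|\theta\|^2=0.
\end{equation*}

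This splits into $\theta\equiv 0$ or $\theta\not\equiv 0$. If $\theta\equiv 0$ then $dF=0$, so $(M,g,J)$ is almost-K\"ahler, and by conformal invariance of the second-Chern--Einstein condition~(\ref{conf-change}) the Gauduchon metric $g$ is itself second-Chern--Einstein; this is alternative~(1). If $\theta\not\equiv 0$ then $\theta^\sharp$ is a non-zero Killing field, and on the open set $U=\{\|\theta\|^2>0\}$ the identity gives $s^H=2\|\theta\|^2-\|N\|^2$; feeding this into the trace of~(\ref{ric-expression}) and into~(\ref{conformal_sca}),~(\ref{diff_sca}) produces $s^W=-2\|N\|^2$ and $s^\star=\tfrac12\|\theta\|^2$ on $U$. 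A non-trivial Killing field has a zero set of empty interior, so $U$ is dense and both identities extend to all of $M$ by continuity. Hence $s^W\le 0$, with $s^W\equiv 0$ if and only if $N\equiv 0$, i.e.\ if and only if $J$ is integrable, in which case~\cite[Theorem 2]{MR1477631} identifies $(M,J)$ with a Hopf surface; and $s^\star=\tfrac12\|\theta\|^2$ is a positive constant exactly when $\|\theta\|^2$ is constant, i.e.\ when $\theta^\sharp$ is a non-zero Killing field of constant length.

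For the topological part, when $s^W$ is nowhere zero the curvature relations for compact almost-Hermitian $4$-manifolds in~\cite[Lemma 3]{MR1604803} give $5\chi+6\sigma=0$, so everything reduces to proving $\chi=0$ by exhibiting a nowhere-vanishing vector field and invoking Hopf's theorem~\cite{MR1512316}. This is where I expect the real difficulty, and where the proof genuinely departs from Theorem~\ref{thm1}: there, $d\theta=0$ promoted ``Killing'' to ``$D^g$-parallel'', forcing $\|\theta\|$ constant and hence $\theta^\sharp$ nowhere zero (which both drove Hopf's theorem and made $s^\star$ constant). Here we only have $D^g\theta=\tfrac12 d\theta$, so $\|\theta\|$ may vary and $\theta^\sharp$ could a priori vanish; the scalar identities above, pinning down only $s^W=-2\|N\|^2$ and $s^\star=\tfrac12\|\theta\|^2$, do not by themselves exclude zeros of $\theta^\sharp$. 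The decisive step is therefore to rule out such zeros when $s^W$ is nowhere zero---equivalently to make $s^\star$ strictly positive---by analysing the totally geodesic zero set of the Killing field $\theta^\sharp$ in conjunction with $N_{\theta^\sharp}=N_{J\theta^\sharp}=0$ and the fact that in dimension $4$ the image of $N$ is pointwise $0$- or $2$-dimensional. Once $\theta^\sharp$ is known to be nowhere zero, Hopf's theorem gives $\chi=0$ and thus $\chi=\sigma=0$.
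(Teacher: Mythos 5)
Your reconstruction follows exactly the route the paper intends: the paper gives no separate proof of Theorem~\ref{N-theta}, saying only that ``a similar proof to Theorem~\ref{thm1}'' works, and your steps (Lemma~\ref{killing} to make $\theta^\sharp$ Killing, Corollary~\ref{ric_star_lee} with $N_{\theta^\sharp}=0$ playing the role that Corollary~\ref{loc-conf-sym} played in the LCS case, evaluation of (\ref{diff_ricci+sca}) against Lemma~\ref{expression-ricci} to get $\left(s^H+\|N\|^2-2\|\theta\|^2\right)\|\theta\|^2=0$, and then $s^W=-2\|N\|^2$, $s^\star=\tfrac12\|\theta\|^2$) are precisely that adaptation. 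Your density argument --- extending $s^H=2\|\theta\|^2-\|N\|^2$ from the open dense set $\{\theta\neq 0\}$ to all of $M$, using that a non-trivial Killing field cannot vanish on an open set --- is a point of care the paper's sketch skips but genuinely needs here, since unlike in Theorem~\ref{thm1} the key identity is only pointwise and $\|\theta\|$ is no longer constant. The characterizations ``$s^W\equiv 0$ iff $J$ integrable'' and ``$s^\star$ constant iff $\theta^\sharp$ has constant length'' then come out correctly.

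However, your proof is incomplete exactly where you flag it: you never show that $\theta^\sharp$ is nowhere vanishing when $s^W$ is nowhere zero, and without that neither ``$s^\star$ is positive'' (your formula gives only $s^\star=\tfrac12\|\theta\|^2\geq 0$, which vanishes on the zero set of $\theta^\sharp$) nor ``$\chi=0$ by Hopf's theorem'' is established; Hopf's theorem needs a nowhere-zero field, and a non-trivial Killing field on a compact $4$-manifold can have zeros (a union of totally geodesic submanifolds of even codimension). Announcing the ``decisive step'' and listing the tools one might use is not the same as carrying it out, so as a proof this is a genuine gap. To be fair, it is a gap you share with the source: the paper's one-line proof-by-analogy silently relies on the Theorem~\ref{thm1} mechanism ($d\theta=0$ plus Killing $\Rightarrow$ $D^g\theta=0$ $\Rightarrow$ constant length $\Rightarrow$ nowhere zero), which is unavailable here --- indeed the final ``if and only if'' in the statement of Theorem~\ref{N-theta} concedes that constant length is not automatic, yet the positivity of $s^\star$ and $\chi=\sigma=0$ are asserted anyway. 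Your diagnosis of where the analogy breaks is therefore correct and is the most valuable part of the write-up; what is missing, both in your attempt and in the paper, is an actual argument excluding zeros of $\theta^\sharp$. One can check, using $N_{\theta^\sharp}=N_{J\theta^\sharp}=0$ and the fact that $\mathrm{Span}(N)$ is a $J$-invariant rank-$2$ distribution when $s^W$ is nowhere zero, that $\theta^\sharp$ is then a section of the rank-$2$ bundle $\mathrm{Span}(N)^{\perp}$, so any zero component is a closed surface tangent to $\mathrm{Span}(N)$; one must still either rule such surfaces out or compute their contribution to $\chi(M)$ (via Kobayashi's theorem, $\chi(M)$ equals the Euler characteristic of the zero set) before the conclusion $\chi=\sigma=0$ can be claimed.
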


    Now, we would like to investigate the condition $N_{\theta^\sharp}=0$ and see if it can be implied by the $J$-invariance of different Ricci forms.
    \begin{lem}\label{ricci-hermitian}
        Let $(M,J,g)$ be a $4$-dimensional almost-Hermitian manifold. Then
        $$\rho^\nabla=R^W(F)-dJ\theta-\left(d\theta\right)^{(2,0)+(0,2)}_{J\cdot,\cdot}-\frac{1}{2}\sum_{i=1}^4\left(N(e_i,e_j)\right)^\flat\wedge\left(JN(e_i,e_j)\right)^\flat,$$
        where $\flat$ is the isomorphism between vector fields and $1$-forms induced by $g.$ In particular $$\left(\rho^\nabla\right)^{(2,0)+(0,2)}=\left(R^W(F)\right)^{(2,0)+(0,2)}-\left(dJ\theta\right)^{(2,0)+(0,2)}-\left(d\theta\right)^{(2,0)+(0,2)}_{J\cdot,\cdot}.$$
    \end{lem}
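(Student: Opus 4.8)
The plan is to compute $\rho^\nabla$ directly from its definition $\rho^\nabla(X,Y)=\tfrac12\sum_i g(R^\nabla_{X,Y}e_i,Je_i)$ by substituting the expression for $R^\nabla_{X,Y}$ supplied by Corollary~\ref{cor1}. This splits $\rho^\nabla$ into four contributions, coming respectively from $(R^W_{X,Y})^{J,+}$, from the $dJ\theta$-term, from the $d\theta$-term, and from the Nijenhuis commutator $\tfrac14\big((D^W_XJ)(D^W_YJ)-(D^W_YJ)(D^W_XJ)\big)$. I would treat these one at a time, the first being the only genuinely delicate one.

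The two ``scalar'' terms are immediate. The $dJ\theta$-term contributes $-\tfrac14(dJ\theta)(X,Y)\sum_i g(Je_i,Je_i)=-(dJ\theta)(X,Y)$, producing the $-dJ\theta$ in the statement; the $d\theta$-term contributes $-\tfrac14(d\theta)(X,Y)\sum_i g(e_i,Je_i)=0$ and so drops out (each $Je_i$ is $g$-orthogonal to $e_i$). The Nijenhuis commutator term is handled exactly as in the proof of Corollary~\ref{cor_2-chern}: using~(\ref{int_weyl}) to trade each $D^WJ$ for the Nijenhuis tensor, together with the cyclic identity~(\ref{cycle_n}) and the dimension-$4$ normalisation $\|N\|^2=8\|N(e_1,e_3)\|^2$, the contraction $\tfrac18\sum_i g(C_{X,Y}e_i,Je_i)$ collapses to $-\tfrac12\sum_{i,j}(N(e_i,e_j))^\flat\wedge(JN(e_i,e_j))^\flat$, which is the last term.

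The crux is the contribution $\beta(X,Y):=\tfrac12\sum_i g(R^W_{X,Y}e_i,Je_i)$ of $(R^W_{X,Y})^{J,+}$, which I must identify with $R^W(F)-(d\theta)^{(2,0)+(0,2)}_{J\cdot,\cdot}$. The point is that $\beta$ is the \emph{first}-type $J$-trace of $R^W$, whereas $R^W(F)=\tfrac12\sum_i g(R^W_{e_i,Je_i}X,Y)$ is the \emph{second}-type trace; the two differ only by the failure of the pair symmetry $g(R^W_{X,Y}Z,W)=g(R^W_{Z,W}X,Y)$. First, since $D^Wg=\theta\otimes g$, the $g$-symmetric part of the endomorphism $R^W_{X,Y}$ is a multiple of $(d\theta)(X,Y)\,\mathrm{id}$; this part contributes to neither $\beta$ nor $R^W(F)$, because it produces the vanishing traces $\sum_i g(e_i,Je_i)=0$ and $\sum_i(d\theta)(e_i,Je_i)=2g(d\theta,F)=0$. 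Hence both $\beta$ and $R^W(F)$ depend only on the $g$-skew part $A_{X,Y}$ of $R^W_{X,Y}$, so that $2\beta(X,Y)=\sum_i g(A_{X,Y}e_i,Je_i)$ and $2R^W(F)(X,Y)=\sum_i g(A_{e_i,Je_i}X,Y)$. Second, since $D^W$ is torsion-free it obeys the first Bianchi identity $\mathfrak{S}_{X,Y,Z}R^W_{X,Y}Z=0$; subtracting the symmetric part shows that $A$ violates the Bianchi identity precisely by the cyclic sum of $d\theta$, and the standard algebraic passage from a Bianchi defect to a pair-symmetry defect (valid for a tensor skew in each pair) then yields $\beta-R^W(F)=-(d\theta)^{(2,0)+(0,2)}_{J\cdot,\cdot}$ after contracting against $J$. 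Carrying out this contraction and verifying that the defect lands exactly in the $(2,0)+(0,2)$ part with the $J\cdot,\cdot$ shift is the main obstacle; everything else is bookkeeping. (As a consistency check, $\rho^\nabla=Ric^B-dJ\theta$ from~(\ref{eq: ricci chern and bismut}) reproduces the $-dJ\theta$ contribution.)

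For the ``In particular'' I would simply project the established identity onto its $(2,0)+(0,2)$ component. The Nijenhuis term disappears because each summand $v^\flat\wedge(Jv)^\flat$ is $J$-invariant, i.e. of type $(1,1)$: a direct check gives $\big(v^\flat\wedge(Jv)^\flat\big)(JX,JY)=\big(v^\flat\wedge(Jv)^\flat\big)(X,Y)$, so it has no $(2,0)+(0,2)$ part. On the other hand $(d\theta)^{(2,0)+(0,2)}_{J\cdot,\cdot}$ is already of type $(2,0)+(0,2)$, since for a $J$-anti-invariant $2$-form $\psi$ the tensor $\psi(J\cdot,\cdot)$ is again skew and satisfies $\psi(J\cdot,\cdot)(JX,JY)=-\psi(J\cdot,\cdot)(X,Y)$. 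Projecting therefore leaves $(\rho^\nabla)^{(2,0)+(0,2)}=(R^W(F))^{(2,0)+(0,2)}-(dJ\theta)^{(2,0)+(0,2)}-(d\theta)^{(2,0)+(0,2)}_{J\cdot,\cdot}$, as claimed.
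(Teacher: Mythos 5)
Your proposal is correct and follows essentially the same route as the paper's proof: the paper likewise substitutes the Chern--Weyl curvature relation into the definition of $\rho^\nabla$, kills the $d\theta\cdot\mathrm{id}$ trace, converts the $(D^WJ)$-commutator into the Nijenhuis wedge sum via~(\ref{int_weyl}), and obtains the $\left(d\theta\right)^{(2,0)+(0,2)}_{J\cdot,\cdot}$ correction precisely from the pair-symmetry defect of $R^W$ you describe, writing out explicitly the six-term $d\theta$ identity that follows from torsion-freeness and $g(R^W_{X,Y}Z,W)+g(R^W_{X,Y}W,Z)=(d\theta)(X,Y)g(Z,W)$, and then contracting it against $F$. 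The only imprecision is your pointer to Corollary~\ref{cor_2-chern} for the commutator term: that corollary traces over the $X,Y$-slots (using the $\|N\|^2$ normalisation), whereas the trace over the endomorphism slots needed here is handled by applying~(\ref{int_weyl}) twice together with $N(X,JY)=-JN(X,Y)$ --- but the collapse you claim, to $-\tfrac12\sum_{i,j}\left(N(e_i,e_j)\right)^\flat\wedge\left(JN(e_i,e_j)\right)^\flat$, is exactly right.
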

    \begin{proof}
        From Proposition~\ref{relation-chern-weyl}, we have that
        \begin{align*}
            2\rho^\nabla(X,Y)=&\sum_{i=1}^4g(R^\nabla_{X,Y}e_i,Je_i),\\
            =&\sum_{i=1}^4g(R^W_{X,Y}e_i,Je_i)-\frac{1}{2}\left(dJ\theta\right)(X,Y)g(Je_i,Je_i)-\frac{1}{2}\left(d\theta\right)(X,Y)g(e_i,Je_i)\\
            &-\frac{1}{2}g\left(\left(D^W_X\left(D^W_YJ\right)-D^W_Y\left(D^W_XJ\right)-D^W_{[X,Y]}J \right)Je_i,Je_i\right)\\
            &+\frac{1}{4}g\left(\left(\left(D^W_XJ\right)\left(D^W_YJ\right) -\left(D^W_YJ\right)\left(D^W_XJ\right)   \right)e_i,Je_i\right),\\
            =&\sum_{i=1}^4g(R^W_{X,Y}e_i,Je_i)-2\left(dJ\theta\right)(X,Y)\\
            &+\frac{1}{4}\sum_{i=1}^4g\left(\left(\left(D^W_XJ\right)\left(D^W_YJ\right) -\left(D^W_YJ\right)\left(D^W_XJ\right)   \right)e_i,Je_i\right).
        \end{align*}
        Because $D^W$ is torsion free, and using the relation
        $$g(R^W_{X,Y}Z,W)+g(R^W_{X,Y}W,Z)=(d\theta)(X,Y)g(Z,W),$$
        we can easily deduce  the following relation for any vector fields $X,Y,Z,W$
        \begin{multline*}
            2g(R^W_{X,Y}Z,W)=\,2g(R^W_{Z,W}X,Y)-d\theta(Z,W)g(X,Y)+d\theta(X,Y)g(Z,W) -d\theta(X,W)g(Y,Z) \\ -d\theta(Y,Z)g(X,W)+d\theta(Y,W)g(X,Z)+d\theta(X,Z)g(Y,W).
        \end{multline*}
        In particular,
        $$\displaystyle\sum_{i=1}^4g(R^W_{e_i,Je_i}X,Y)=\sum_{i=1}^4g(R^W_{X,Y}e_i,Je_i)+d\theta(JX,Y)+d\theta(X,JY).$$
        Hence,
        \begin{multline}\label{hermitian-ricci}
            2\rho^\nabla(X,Y)=\,2R^W(F)(X,Y)-d\theta(JX,Y)-d\theta(X,JY)-2\left(dJ\theta\right)(X,Y)\\
            +\frac{1}{4}\sum_{i=1}^4g\left(\left(\left(D^W_XJ\right)\left(D^W_YJ\right) -\left(D^W_YJ\right)\left(D^W_XJ\right)   \right)e_i,Je_i\right).
        \end{multline}
        Moreover, using~(\ref{int_weyl}) we have
        \begin{align*}
            \sum_{i=1}^4g\left(\left(D^W_XJ\right)\left(D^W_YJ\right)e_i,Je_i\right)&=-2\sum_{i=1}^4g\left(N\left(\left(D^W_YJ\right)e_i,e_i\right),X\right),\\
            &=-2\sum_{i,j=1}^4g\left(N\left(e_j,e_i\right),X\right)g\left( \left(D^W_YJ\right)e_i,e_j \right),\\
            &=-4\sum_{i,j=1}^4g\left(N\left(e_i,e_j\right),X\right)g\left(JN\left(e_i,e_j\right),Y\right).
        \end{align*}
        Hence
        \begin{align*}
            \sum_{i=1}^4g\left(\left(\left(D^W_XJ\right)\left(D^W_YJ\right) -\left(D^W_YJ\right)\left(D^W_XJ\right)   \right)e_i,Je_i\right)=&
            -4\sum_{i,j=1}^4g\left(N\left(e_i,e_j\right),X\right)g\left(JN\left(e_i,e_j\right),Y\right)\\
            &+4\sum_{i,j=1}^4g\left(N\left(e_i,e_j\right),Y\right)g\left(JN\left(e_i,e_j\right),X\right),\\
            =&-4\sum_{i,j=1}^4\left(\left(N\left(e_i,e_j\right)\right)^\flat \wedge\left(JN\left(e_i,e_j\right)\right)^\flat \right)(X,Y).
        \end{align*}
        From~(\ref{hermitian-ricci}), we deduce that
        \begin{equation*}
        \begin{split}
            \rho^\nabla(X,Y)=\,R^W(F)(X,Y)-\frac{1}{2}\left(d\theta(JX,Y)+d\theta(X,JY)\right)-\left(dJ\theta\right)(X,Y) \\
            -\frac{1}{2}\sum_{i,j=1}^4\left(N\left(e_i,e_j\right)\right)^\flat \wedge\left(JN\left(e_i,e_j\right)\right)^\flat(X,Y).
        \end{split}
        \end{equation*}
    \end{proof}

    \begin{rem}
        We can compute the $J$-anti-invariant part of $R^W(F)$ and it is given by
        \begin{equation*}
            \left(R^W(F)\right)^{(2,0)+(0,2)}=-\sum_{i=1}^4g\left(\left(D^g_{e_i}N \right)(\cdot,\cdot),e_i\right)+\frac{3}{2}N_{\theta^\sharp},
        \end{equation*}
        where $N_{\theta^\sharp}=g(N(\cdot,\cdot),\theta^\sharp).$
    \end{rem}

    \begin{cor}\label{Ntheta}
        Let $(M,g,J)$ be a $4$-dimensional almost-Hermitian manifold. Suppose that $\rho^\nabla$ and $R^W(F)$ are $J$-invariant. Then $N_{\theta^\sharp}=0$.
    \end{cor}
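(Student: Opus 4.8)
The plan is to read both hypotheses as the vanishing of the $J$-anti-invariant (i.e.\ $(2,0)+(0,2)$) parts of the two Ricci-type forms and to feed this into the identity already isolated in Lemma~\ref{ricci-hermitian}. Concretely, $\rho^\nabla$ being $J$-invariant means $(\rho^\nabla)^{(2,0)+(0,2)}=0$, and likewise $R^W(F)$ being $J$-invariant means $(R^W(F))^{(2,0)+(0,2)}=0$. Substituting these into the ``in particular'' formula of Lemma~\ref{ricci-hermitian},
$$(\rho^\nabla)^{(2,0)+(0,2)}=(R^W(F))^{(2,0)+(0,2)}-(dJ\theta)^{(2,0)+(0,2)}-(d\theta)^{(2,0)+(0,2)}_{J\cdot,\cdot},$$
both curvature terms drop out and we are left with the single constraint
$$(dJ\theta)^{(2,0)+(0,2)}+(d\theta)^{(2,0)+(0,2)}_{J\cdot,\cdot}=0.$$

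First I would observe that this combination is purely algebraic in the Nijenhuis tensor and the Lee form: neither $d\theta$ nor $dJ\theta$ involves any covariant derivative of $N$, so no ``divergence of $N$'' term of the kind appearing in the Remark preceding the statement can enter their anti-invariant parts. Essentially the only $J$-anti-invariant $2$-form one can build algebraically and linearly from $N$ and $\theta$ in real dimension $4$ is $N_{\theta^\sharp}=g(N(\cdot,\cdot),\theta^\sharp)$, which is indeed $J$-anti-invariant because $N(JX,JY)=-N(X,Y)$; the only candidate pure-$\theta$ form, $\theta\wedge J\theta$, is $J$-invariant and so cannot contribute. Hence the left-hand side must be a constant multiple $c\,N_{\theta^\sharp}$, and the whole content of the corollary is that $c\neq 0$.

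To pin down the coefficient I would expand $d\alpha(X,Y)=(D^g_X\alpha)(Y)-(D^g_Y\alpha)(X)$ for $\alpha=\theta$ and for $\alpha=J\theta$. Differentiating the $1$-form $J\theta$ by the product rule produces a term involving $(D^gJ)\theta$, into which I would insert the explicit structure equation for $D^gJ$ recalled in the proof of Lemma~\ref{gen_star},
$$(D^g_XJ)Y=\tfrac{1}{2}g(X,Y)J\theta^\sharp+\tfrac{1}{2}\theta(JY)X+\tfrac{1}{2}g(JX,Y)\theta^\sharp-\tfrac{1}{2}\theta(Y)JX+2\bigl(g(N(Y,\cdot),JX)\bigr)^\sharp.$$
Projecting onto the $J$-anti-invariant part, the contributions built from $D^g\theta$ alone must cancel between $(dJ\theta)^{(2,0)+(0,2)}$ and $(d\theta)^{(2,0)+(0,2)}_{J\cdot,\cdot}$, while the last, Nijenhuis, term of the structure equation reproduces $N_{\theta^\sharp}$ with a definite nonzero coefficient, giving $N_{\theta^\sharp}=0$. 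Equivalently, and perhaps more transparently, one may substitute the Remark's formula $(R^W(F))^{(2,0)+(0,2)}=-\sum_{i}g\bigl((D^g_{e_i}N)(\cdot,\cdot),e_i\bigr)+\tfrac{3}{2}N_{\theta^\sharp}$ into the same identity to get an analogous expression $(\rho^\nabla)^{(2,0)+(0,2)}=-\sum_{i}g\bigl((D^g_{e_i}N)(\cdot,\cdot),e_i\bigr)+c'\,N_{\theta^\sharp}$ with $c'\neq\tfrac{3}{2}$, the divergence term being unchanged since $dJ\theta+d\theta_{J\cdot,\cdot}$ carries no derivative of $N$; the two hypotheses then set both right-hand sides to zero, and subtracting them cancels the common divergence term and leaves $N_{\theta^\sharp}=0$.

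The main obstacle is precisely this coefficient computation: one must verify that the $D^g\theta$-contributions to $(dJ\theta)^{(2,0)+(0,2)}$ and $(d\theta)^{(2,0)+(0,2)}_{J\cdot,\cdot}$ cancel and that the Nijenhuis contributions do \emph{not}, i.e.\ that the coefficient of $N_{\theta^\sharp}$ is genuinely nonzero, so that the scalar relation forces $N_{\theta^\sharp}=0$ rather than being vacuous. Everything else is formal: the passage from $J$-invariance to the vanishing of anti-invariant parts and the substitution into Lemma~\ref{ricci-hermitian} are immediate.
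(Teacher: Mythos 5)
Your opening move is exactly the paper's: reading both hypotheses as the vanishing of the $(2,0)+(0,2)$ parts and substituting into Lemma~\ref{ricci-hermitian} to obtain $(dJ\theta)^{(2,0)+(0,2)}+(d\theta)^{(2,0)+(0,2)}_{J\cdot,\cdot}=0$, which is the paper's equation~(\ref{derive-theta}). From there the paper argues via Lie derivatives: Cartan's formula applied to $F=g(J\cdot,\cdot)$ identifies $(dJ\theta)^{(2,0)+(0,2)}$ with $\left(\mathcal{L}_{\theta^\sharp}J\right)^{anti-sym}$ and $(d\theta)^{(2,0)+(0,2)}_{J\cdot,\cdot}$ with $g\left(J\left(\mathcal{L}_{J\theta^\sharp}J\right)^{anti-sym}\cdot,\cdot\right)$, and then the algebraic identity $\mathcal{L}_{J\theta^\sharp}J-J\left(\mathcal{L}_{\theta^\sharp}J\right)=4N(\theta^\sharp,\cdot)$ combined with the cyclic identity~(\ref{cycle_n}) yields $N_{\theta^\sharp}=0$. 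Your replacement of this by a direct expansion of $d\theta$ and $dJ\theta$ through $D^g$ and the structure equation for $D^gJ$ is viable, and your two computational claims are in fact correct: in the sum the $D^g\theta$-contributions to the two anti-invariant parts cancel identically, and what survives is $2\left[g(N(\theta^\sharp,Y),JX)-g(N(\theta^\sharp,X),JY)\right]$.

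The genuine flaw is the a priori classification you use to bypass the computation. The surviving term is \emph{not} a constant multiple of $N_{\theta^\sharp}$: applying the cyclic identity~(\ref{cycle_n}) to it (or computing in a frame) shows it equals $2N_{J\theta^\sharp}$, where $N_{J\theta^\sharp}(X,Y)=g(N(X,Y),J\theta^\sharp)=N_{\theta^\sharp}(JX,Y)$. This is a second, generically independent, $J$-anti-invariant $2$-form built linearly from $N$ and $\theta$, so your claim that $N_{\theta^\sharp}$ is ``essentially the only'' candidate is false, and the inference ``hence the left-hand side is $c\,N_{\theta^\sharp}$, and the content is $c\neq 0$'' rests on a premise that the computation itself refutes. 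The same error recurs in your ``equivalently'' paragraph: one actually gets $(\rho^\nabla)^{(2,0)+(0,2)}=-\sum_{i=1}^4 g\left(\left(D^g_{e_i}N\right)(\cdot,\cdot),e_i\right)+\tfrac{3}{2}N_{\theta^\sharp}-2N_{J\theta^\sharp}$, not the divergence term plus $c'N_{\theta^\sharp}$. The corollary is still reachable from what you would actually find, because $N_{J\theta^\sharp}=N_{\theta^\sharp}(J\cdot,\cdot)$ vanishes if and only if $N_{\theta^\sharp}$ does (replace $X$ by $-JX$); but this identification and final step are absent from your argument, and since you also defer the decisive cancellation computation itself (flagging it as ``the main obstacle''), the proposal as written does not close the proof.
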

    \begin{proof}
        From Lemma~\ref{ricci-hermitian}, we obtain that 
        \begin{equation}\label{derive-theta}
            \left(dJ\theta\right)^{(2,0)+(0,2)}=-\left(d\theta\right)^{(2,0)+(0,2)}_{J\cdot,\cdot}.
        \end{equation}
        Applying the Lie derivative $\mathcal{L}_{\theta^\sharp}$ to the relation $F=g(J\cdot,\cdot)$ and using the Cartan formula we obtain
        $$dJ\theta=-\|\theta\|^2F+\theta\wedge J\theta+2\left(D^g\theta\right)^{sym}_{J\cdot,\cdot}+g\left(\mathcal{L}_{\theta^\sharp}J\cdot,\cdot\right).$$
        In particular
        \begin{equation}\label{dJtheta} 
            \left(dJ\theta\right)^{(2,0)+(0,2)}=\left(\mathcal{L}_{\theta^\sharp}J\right)^{anti-sym}.
        \end{equation}
        Similarly, we have
        $$d\theta=-2\left(D^gJ\theta \right)^{sym}_{J\cdot,\cdot}-g\left(\mathcal{L}_{J\theta^\sharp}J\cdot,\cdot\right).$$ In particular,
        \begin{equation}\label{dTheta}
            \left(d\theta\right)^{(2,0)+(0,2)}_{J\cdot,\cdot}=g\left(J\left(\mathcal{L}_{J\theta^\sharp}J\right)^{anti-sym}\cdot,\cdot\right).
        \end{equation}
        Combining~(\ref{derive-theta})~(\ref{dJtheta}) and~(\ref{dTheta}) we deduce that 
        \begin{equation}\label{final}
            \left(\mathcal{L}_{J\theta^\sharp}J\right)^{anti-sym}=J\left(\mathcal{L}_{\theta^\sharp}J\right)^{anti-sym}.
        \end{equation}
        Now, for any almost-Hermitian manifold we have $$\mathcal{L}_{J\theta^\sharp}J-J\left(\mathcal{L}_{\theta^\sharp}J\right)=4N(\theta^\sharp,\cdot).$$ On the other hand from~(\ref{cycle_n}) we see that $\left(g\left((N(\theta^\sharp,\cdot),\cdot\right)\right)^{anti-sym}=-{2}N_{\theta^\sharp}$ so that
        $$g\left(\left(\mathcal{L}_{J\theta^\sharp}J\right)^{anti-sym}-J\left(\mathcal{L}_{\theta^\sharp}J\right)^{anti-sym}\cdot,\cdot\right)=-{2}N_{\theta^\sharp}.$$
        From~(\ref{final}), we deduce that $N_{\theta^\sharp}=0.$
    \end{proof}

    \begin{cor}
        Suppose that $(M,\tilde{g},J)$ is a compact $4$-dimensional almost-Hermitian manifold and $\tilde{g}$ is a second-Chern--Einstein metric. Suppose that the Gauduchon metric $g$ in the conformal class $[\tilde{g}]$ satisfies $(D^g\theta)^{sym,J,-}=0$ and that $\rho^\nabla$ and $R^W(F)$ are $J$-invariant. Then conclusions of Theorem~\ref{N-theta} hold.
    \end{cor}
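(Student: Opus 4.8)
The plan is to reduce this statement directly to Theorem~\ref{N-theta} by producing its one missing hypothesis, namely $N_{\theta^\sharp}=0$, from the assumed $J$-invariance of $\rho^\nabla$ and $R^W(F)$. The crucial observation is that these two $J$-invariance conditions are precisely the hypotheses of Corollary~\ref{Ntheta}, which is a purely pointwise statement valid on any $4$-dimensional almost-Hermitian manifold and makes no use of compactness or of the second-Chern--Einstein condition. So the corollary is little more than a bookkeeping step that translates a curvature-symmetry assumption into the geometric condition used in Theorem~\ref{N-theta}.

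First I would invoke Corollary~\ref{Ntheta} for the Gauduchon representative $(M,g,J)$: since the Chern--Ricci form $\rho^\nabla$ and the contraction $R^W(F)$ computed for $g$ are $J$-invariant by hypothesis, their $(2,0)+(0,2)$-parts vanish, and the chain of identities~(\ref{derive-theta})--(\ref{final}) appearing in the proof of Corollary~\ref{Ntheta} forces $N_{\theta^\sharp}=0$. With this in hand, all four hypotheses of Theorem~\ref{N-theta} are met: $(M,\tilde{g},J)$ is a compact $4$-dimensional almost-Hermitian manifold, $\tilde{g}$ is second-Chern--Einstein, and the Gauduchon metric $g\in[\tilde{g}]$ satisfies both $(D^g\theta)^{sym,J,-}=0$ and $N_{\theta^\sharp}=0$. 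Therefore Theorem~\ref{N-theta} applies verbatim, and its dichotomy—either $(M,g,J)$ is a second-Chern--Einstein almost-K\"ahler manifold, or $\theta^\sharp$ is a non-zero Killing vector field with the stated conclusions on the signs of $s^W$ and $s^\star$, the integrability criterion for $s^W\equiv 0$, and the vanishing $\chi=\sigma=0$ when $s^W$ is nowhere zero—holds as asserted.

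There is essentially no obstacle in the argument itself; the entire content of the corollary is the passage, through Corollary~\ref{Ntheta}, from the curvature-symmetry hypothesis to the hypothesis $N_{\theta^\sharp}=0$ required by Theorem~\ref{N-theta}. The only point worth verifying carefully is that Corollary~\ref{Ntheta} needs nothing beyond the $J$-invariance of $\rho^\nabla$ and $R^W(F)$ relative to the fixed metric $g$, so that it may be applied to the Gauduchon representative without any circularity with the second-Chern--Einstein assumption.
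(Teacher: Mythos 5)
Your proposal is correct and follows exactly the paper's own argument: invoke Corollary~\ref{Ntheta} to convert the $J$-invariance of $\rho^\nabla$ and $R^W(F)$ into the condition $N_{\theta^\sharp}=0$, and then apply Theorem~\ref{N-theta} with all hypotheses satisfied. Your additional remark that Corollary~\ref{Ntheta} is a pointwise statement needing neither compactness nor the second-Chern--Einstein assumption is accurate and consistent with how the paper uses it.
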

    \begin{proof}
        $\rho^\nabla$ and $R^W(F)$ being $J$-invariant implies that $N_{\theta^\sharp}=0$ by Corollary~\ref{Ntheta}. We apply then Theorem~\ref{N-theta}.
    \end{proof}
    
\subsection{Relation between second Chern--Ricci and Bismut--Ricci tensors}
    Finally, we relate the second Chern--Ricci tensor with the Bismut--Ricci tensor. This will give us the possibility to compute it on almost-abelian Lie groups in Section~\ref{Sec_4}. Thanks to Corollary~\ref{cor_2-chern} and Lemma~\ref{ricci-hermitian} we see that both the first Chern--Ricci form $\rho^\nabla$ and the second Chern--Ricci form $r$ can be expressed in terms of $R^W(F)$. Hence, we get the following
    \begin{prop}\label{prop: second chern first bismut}
        Let $(M,g,J)$ be a $4$-dimensional almost-Hermitian manifold. Then,
        \begin{equation*}
            r = \left(\rho^\nabla + dJ\theta \right)^{J,+} + \frac{1}{4}\left(2\delta^g\theta + 2\|\theta\|^2 - \|N\|^2 \right)F + \frac{1}{2}\sum_{i,j}^4\left(N(e_i,e_j)\right)^{\flat}\wedge\left(J N(e_i,e_j)\right)^{\flat}.
        \end{equation*}
        In particular, thanks to ~(\ref{eq: ricci chern and bismut}),
        \begin{equation*}
            r = \left(Ric^B\right)^{J,+} + \frac{1}{4}\left(2\delta^g\theta + 2\|\theta\|^2 - \|N\|^2 \right)F + \frac{1}{2}\sum_{i,j}^4\left(N(e_i,e_j)\right)^{\flat}\wedge\left(J N(e_i,e_j)\right)^{\flat}\;.
        \end{equation*}
    \end{prop}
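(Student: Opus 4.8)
The plan is to eliminate $\left(R^W(F)\right)^{J,+}$ between the two descriptions of Chern--Ricci data already obtained in terms of the Weyl curvature $R^W(F)$. Corollary~\ref{cor_2-chern} writes the second Chern--Ricci form as
$$r=\left(R^W(F)\right)^{J,+}+\frac{1}{2}\left(\delta^g\theta+\|\theta\|^2\right)F-\frac{1}{4}\|N\|^2F,$$
while Lemma~\ref{ricci-hermitian} writes the first Chern--Ricci form $\rho^\nabla$ through $R^W(F)$. Solving the former for $\left(R^W(F)\right)^{J,+}$ and feeding in the $J$-invariant part of the latter will produce the claimed identity, after which~(\ref{eq: ricci chern and bismut}) replaces $\rho^\nabla+dJ\theta$ by $Ric^B$.

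The key step is to take the $J$-invariant part of the identity in Lemma~\ref{ricci-hermitian}, and the only thing that requires care here is the $J$-type of the three correction terms. First I would check that $\left(d\theta\right)^{(2,0)+(0,2)}_{J\cdot,\cdot}$ is $J$-anti-invariant: the assignment $(X,Y)\mapsto\phi(JX,Y)$ sends a $J$-anti-invariant $2$-form $\phi$ to another $J$-anti-invariant $2$-form, so this term is killed by $(\cdot)^{J,+}$. Second, each summand $\left(N(e_i,e_j)\right)^{\flat}\wedge\left(JN(e_i,e_j)\right)^{\flat}$ is $J$-invariant, since $\alpha^\flat\wedge(J\alpha)^\flat$ is unchanged under $(X,Y)\mapsto(JX,JY)$ for any vector $\alpha$; hence the whole Nijenhuis sum survives $(\cdot)^{J,+}$ unchanged. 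Taking $J$-invariant parts therefore yields
$$\left(\rho^\nabla+dJ\theta\right)^{J,+}=\left(R^W(F)\right)^{J,+}-\frac{1}{2}\sum_{i,j=1}^4\left(N(e_i,e_j)\right)^{\flat}\wedge\left(JN(e_i,e_j)\right)^{\flat}.$$

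Substituting this expression for $\left(R^W(F)\right)^{J,+}$ into the formula of Corollary~\ref{cor_2-chern} and collecting the coefficient of $F$ as $\frac{1}{4}\left(2\delta^g\theta+2\|\theta\|^2-\|N\|^2\right)$ gives the first displayed formula, and the second follows at once from $\left(\rho^\nabla+dJ\theta\right)^{J,+}=\left(Ric^B\right)^{J,+}$ via~(\ref{eq: ricci chern and bismut}). I expect the main (indeed only) obstacle to be the $J$-type bookkeeping of the second paragraph; everything else is linear substitution and the arithmetic of the $F$-coefficient. It is worth recording explicitly that $r$ is itself $J$-invariant (as noted before Corollary~\ref{cor_2-chern}), which is what makes passing to $J$-invariant parts throughout consistent with the final equality.
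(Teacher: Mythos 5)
Your proposal is correct and follows exactly the route the paper takes: the paper derives this proposition by combining Corollary~\ref{cor_2-chern} with Lemma~\ref{ricci-hermitian} (both expressing Chern--Ricci data through $R^W(F)$) and then invoking~(\ref{eq: ricci chern and bismut}). Your $J$-type bookkeeping --- that $\left(d\theta\right)^{(2,0)+(0,2)}_{J\cdot,\cdot}$ is $J$-anti-invariant and hence dies under $(\cdot)^{J,+}$, while each $\left(N(e_i,e_j)\right)^{\flat}\wedge\left(JN(e_i,e_j)\right)^{\flat}$ is $J$-invariant and survives --- is exactly the verification the paper leaves implicit, and it is right.
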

    From this proposition we see that in the integrable case the second-Chern--Einstein problem agrees with the Bismut--Einstein problem 
    $$ \left(Ric^B\right)^{J,+} = \lambda \, F, \quad \text{for some function } \lambda. $$
    As a matter of fact, on a $4$-dimensional Hermitian manifold 
    $$ R^W(F) = \left(Ric^B\right)^{J,+}.\; $$
    This also follows from Lemma~\ref{ricci-hermitian} and ~(\ref{eq: ricci chern and bismut}). 
    \begin{rem}
        One can check that the crucial property of $4$-dimensional manifolds that leads to these relations between the second Chern--Ricci form, the first Bismut--Ricci form and $R^W(F)$ is $dF=\theta\wedge F$. As a consequence, in higher dimension $2n$, if we assume the Hermitian structure to be locally conformally K\"ahler we obtain similar relations (see ~\cite{MR1836272}). For example, computations analogous to the one we did here can show that on a LCK manifold
        $$ R^W(F) = Ric^{(\frac{1}{1-n})}\;, $$
        where $Ric^{(t)}$ is the first Ricci form of the Gauduchon connection $\nabla^t$ introduced in~\cite{MR1456265}, see Section~\ref{Sec_4} for the definition.
    \end{rem}

\section{$4$-dimensional compact examples with almost-Hermitian second-Chern--Einstein metrics}\label{Sec_3}
Here we use the same notation of Lie algebras as~\cite{MR404362}.
\subsection{Conformally locally symplectic Lie algebras associated to compact solvmanifolds}

\begin{enumerate}
\item{The Lie algebra $\mathcal{A}_{3,6}\oplus \mathcal{A}_1$}: the structure of the Lie algebra is 
$$[e_1,e_3]=-e_2,\quad [e_2,e_3]=e_1,$$
where $\{e_1,e_2,e_3,e_4\}$ is a basis of $\mathcal{A}_{3,6}\oplus \mathcal{A}_1$.
The associated simply connected group to the Lie algebra $\mathcal{A}_{3,6}\oplus \mathcal{A}_1$ admits lattices 
(see for example~\cite{MR3480018,MR3763412,MR4088745}, in the notation of~\cite{MR3763412} $\mathcal{A}_{3,6}\oplus \mathcal{A}_1$ corresponds to $\mathfrak{r}^\prime_{3,0}\times\mathbb{R}$).
We consider the almost-complex structure
$$Je_1=e_3,\quad Je_2=e_4.$$
The almost-complex structure $J$ is non-integrable because $N(e_1,e_2)=\frac{1}{4}e_3.$

We consider the following $J$-compatible metric $g$
$$g=\left(\sqrt{5}-1\right)\left(e^1\otimes e^1+e^3\otimes e^3\right)+e^2\otimes e^2+e^4\otimes e^4,$$
where $\{e^1,e^2,e^3,e^4\}$ is the dual basis.

The pair $(g,J)$ induces the fundamental form
$$F=\left(\sqrt{5}-1\right)\,e^{13}+e^{24},$$
where $e^{13}=e^{1}\wedge e^3$ etc. Remark that the form $dF=e^{134}.$ Moreover, the Lee form is given by
$$\theta=\frac{1}{\left(\sqrt{5}-1\right)}e^4.$$
Hence $d\theta=0.$ Moreover, $(D^g\theta)^{sym,J,-}=0.$ On the other hand, the second Chern--Ricci form is given by $$r=\frac{1}{4}\,e^{13}+\frac{1}{4\left(\sqrt{5}-1\right)}e^{24},$$
so the metric $g$ is a second-Chern--Einstein metric with a positive Hermitian scalar curvature $s^H=\frac{1}{\sqrt{5}-1}$.
Thus, $\theta$ is $D^g$-parallel. We also remark that $N_{\theta^\sharp}=0$ and the first Chern--Ricci form $\rho^\nabla=\frac{1}{2}\,e^{13}$ is J-invariant.\\

\item{The Lie algebra $\mathcal{A}_{4,1}$}: the structure of the Lie algebra is 
$$[e_2,e_4]=e_1,\quad [e_3,e_4]=e_2,$$
where $\{e_1,e_2,e_3,e_4\}$ is a basis of $\mathcal{A}_{4,1}$.
The associated simply connected group to the Lie algebra $\mathcal{A}_{4,1}$ admits lattices 
(see for example~\cite{MR3480018,MR3763412,MR4088745}, in the notation of~\cite{MR3763412} $\mathcal{A}_{4,1}$ corresponds to $\mathfrak{n}_{4}$).

We consider the almost-complex structure
$$Je_1=e_3,\quad Je_2=e_4.$$
The almost-complex structure $J$ is non-integrable because $N(e_1,e_2)=\frac{1}{4}e_2.$

We consider the following $J$-compatible metric $g$
$$g=\frac{1}{2}\left(e^1\otimes e^1+e^3\otimes e^3\right)+e^2\otimes e^2+e^4\otimes e^4,$$
where $\{e^1,e^2,e^3,e^4\}$ is the dual basis.

The pair $(g,J)$ induces the fundamental form
$$F=\frac{1}{2}\,e^{13}+e^{24}.$$
Remark that the form $dF=\frac{1}{2}\,e^{234}.$ Moreover, the Lee form is given by
$$\theta=-\frac{1}{2}e^3.$$
Hence $d\theta=0.$ However in this example, $(D^g\theta)^{sym,J,-}$ does not vanish so $\theta^\sharp$ is not a Killing vector field. Explicitly, $(D^g\theta)^{sym,J,-}=\frac{1}{2}\left(e^2\otimes e^4 +e^4\otimes e^2\right)$. On the other hand, the second Chern--Ricci form
$r$ vanishes,
so the metric $g$ is a second-Chern--Einstein metric with vanishing Hermitian scalar curvature.
We also remark that $N_{\theta^\sharp}=0$ and the first Chern--Ricci form vanishes.
\\

\item{The Lie algebra $\mathcal{A}_{4,8}$}: the structure of the Lie algebra is 
$$[e_2,e_3]=e_1,\quad [e_2,e_4]=e_2,\quad [e_3,e_4]=-e_3,$$
where $\{e_1,e_2,e_3,e_4\}$ is a basis of $\mathcal{A}_{4,8}$.
The associated simply connected group to the Lie algebra $\mathcal{A}_{4,8}$ admits lattices 
(see for example~\cite{MR3480018,MR4088745}, in the notation of~\cite{MR4088745} $\mathcal{A}_{4,8}$ corresponds to $\mathfrak{d}_{4}$)
We consider the almost-complex structure
$$Je_1=e_4,\quad Je_2=e_3.$$
The almost-complex structure $J$ is non-integrable because $N(e_1,e_2)=\frac{1}{2}e_3.$

We consider the following $J$-compatible metric $g$
$$g=\sum_{i=1}^4e^i\otimes e^i,$$
where $\{e^1,e^2,e^3,e^4\}$ is the dual basis.

The pair $(g,J)$ induces the fundamental form
$$F=e^{14}+e^{23}.$$
Remark that the form $dF=-e^{234}.$ Moreover, the Lee form is given by
$$\theta=-e^4.$$
Hence $d\theta=0.$ However in this example, $(D^g\theta)^{sym,J,-}$ does not vanish so $\theta^\sharp$ is not a Killing vector field. Explicitly, $(D^g\theta)^{sym,J,-}=e^3\otimes e^3 -e^2\otimes e^2.$ On the other hand, the second Chern--Ricci form
$r$ vanishes,
so the metric $g$ is a second-Chern--Einstein metric with vanishing Hermitian scalar curvature.
We also remark that $N_{\theta^\sharp}=0$ and the first Chern--Ricci form vanishes.\\

\end{enumerate}

\subsection{Non conformally locally symplectic Lie algebra associated to compact solvmanifolds}


{The Lie algebra $\mathcal{A}_{4,10}$}: the structure of the Lie algebra is 
$$[e_2,e_3]=e_1,\quad [e_2,e_4]=-e_3,\quad [e_3,e_4]=e_2,$$
where $\{e_1,e_2,e_3,e_4\}$ is a basis of $\mathcal{A}_{4,10}$.
The associated simply connected group to the Lie algebra $\mathcal{A}_{4,8}$ admits lattices 
(see for example~\cite{MR3480018,MR4088745}, in the notation of~\cite{MR4088745} $\mathcal{A}_{4,10}$ corresponds to $\mathfrak{d}^\prime_{4,0}$).
We consider the almost-complex structure
$$Je_1=e_3,\quad Je_2=e_4.$$
The almost-complex structure $J$ is non-integrable because $N(e_1,e_2)=\frac{1}{4}e_2+\frac{1}{4}e_3.$

We consider the following $J$-compatible metric $g$
$$g=\frac{1+\sqrt{17}}{8}(e^1\otimes e^1+e^3\otimes e^3)+(e^2\otimes e^2+e^4\otimes e^4),$$
where $\{e^1,e^2,e^3,e^4\}$ is the dual basis.

The pair $(g,J)$ induces the fundamental form
$$F=\frac{1+\sqrt{17}}{8}\,e^{13}+e^{24}.$$
Remark that the form $dF=-\frac{1+\sqrt{17}}{8}\,e^{124}.$ Moreover, the Lee form is given by
$$\theta=-\frac{1+\sqrt{17}}{8}\,e^1.$$
Hence $d\theta\neq 0.$ In this example, $(D^g\theta)^{sym,J,-}$ vanishes. On the other hand, the second Chern--Ricci form is
$$r=\frac{1+\sqrt{17}}{32}\,e^{13}+\frac{1}{4}\,e^{24}$$
so the metric $g$ is a second-Chern--Einstein metric with positive Hermitian scalar curvature $s^H=1.$ Hence, $\theta^\sharp$ is a Killing vector field but not $D^g$-parallel. 
We also remark that in this example $N_{\theta^\sharp}\neq 0$ and the first Chern--Ricci form $\rho^\nabla=\frac{1}{2}\,e^{24}-\frac{1}{2}\,e^{34}$ is not $J$-invariant.\\

\section{Special metrics metrics on almost-abelian Lie algebras}\label{Sec_4}

 An {almost-abelian} Lie group $G$ is a Lie group whose Lie algebra $\g$ has a codimension-one abelian ideal $\n\subset \g$. Given an almost-Hermitian left-invariant structure $(g,J)$ on a $2n$-dimensional almost-abelian Lie group $G$, define $\n_1:=\n\cap J\n$ and $J_1: = J_{|_{\n_1}}$. Then we can choose an orthonormal basis $\{e_1,\ldots,e_{2n}\}$ for $\g$ such that 

   $$ \n=\mbox{span}_\R\left<e_1,\ldots,e_{2n-1}\right> \quad \text{ and } \quad Je_i=e_{2n-i+1} \text{ for } i=1,\ldots,n.  $$

    Hence, the fundamental form $F(\cdot,\cdot):=g(J\cdot,\cdot)$ associated to the almost-Hermitian structure $(J,g)$ is
    $$ F = e^1\wedge e^{2n} + e^2\wedge e^{2n-1} + \cdots + e^n\wedge e^{n+1},$$
    given in terms of the dual left-invariant frame $\{e^1,\ldots,e^{2n}\}$.

    The algebra structure of $\g$ is completely described by the adjoint map 
    \begin{eqnarray*}  
    \mbox{ad}_{e_{2n}}: \g& \rightarrow& \g \\
     x &\mapsto& [e_{2n},x].\end{eqnarray*}
     The matrix associated to this endomorphism is
    
    \begin{equation}\label{eq: ad_e_2n}
        \ad_{e_{2n}|_\n} = \begin{pmatrix} 
        a & b \\
        v & A
        \end{pmatrix}, \quad a \in\R,\; b,v \in \n_1,\; A\in\mathfrak{gl}(\n_1).
    \end{equation}
    
    The data $(a,b,v,A)$ completely characterizes the almost-Hermitian structure $(g,J)$. For example, the integrability of $J$ can be expressed in terms of $(a,b,v,A)$ asking that $b=0$ and $A\in\mathfrak{gl}(n_1,J_1)$, where $\mathfrak{gl}(n_1,J_1)$ denotes endomorphisms of $\n_1$ commuting with $J_1$, see \cite[Lemma 4.1]{MR3957836}.
    
    On an almost-abelian almost-Hermitian Lie group $\left(G,[\cdot,\cdot]_{(a,b,v,A)},J,g\right)$ the Lee form is given by
    \begin{equation}\label{lee-form}
        \theta = J\delta^gF = (Jv)^{\flat} - (\tr\,A)e^{2n},
    \end{equation}
    with respect to the adapted unitary basis $\{e_1,\ldots,e_{2n}\}$, see for example~\cite{Fino:2020tr}.
  
    Given an almost-Hermitian manifold $(M,g,J)$, Gauduchon introduced in~\cite{MR1456265} a $1$-parameter family $\Na^{(t)}$ of canonical Hermitian connections, i.e. $\Na^{(t)} g = \Na^{(t)} J = 0$. 
    In this family $\Na^{(1)}=\nabla$ corresponds to the Chern connection; while $\Na^{(-1)}=\nabla^B$ corresponds to the Bismut connection.
%
    Any canonical connection $\Na^{(t)}$ induces the associated {first Ricci form} $$Ric^{(t)}= \frac{1}{2}\sum_{i=1}^{2n}g(R^{\nabla^t}_{X,Y}e_i,Je_i),$$ 
    where $R^{\nabla^{(t)}}$ denotes the curvature tensor of $\Na^{(t)}$. 
    
    The first Ricci forms of the canonical connections on a Lie group $(G,\g)$ equipped with an almost-Hermitian structure $(g,J)$ were computed in~\cite{MR2988734}. In particular, for any parameter $t\in\R$ these are given by
    
    $$ Ric^{(t)}(X,Y) = - \frac{1}{2}\left\{ \tr\left(\ad_{[X,Y]}\circ J\right) - t \, \tr\,\ad_{J[X,Y]} + (t-1)g\left(F,d [X,Y]^{\flat}\right)  \right\}\;. $$

    Then, a direct computation leads to the following Lemma
    \begin{lem}
        Let $\left(G,[\cdot,\cdot]_{(a,b,v,A)},g,J\right)$ be an almost-abelian almost-Hermitian Lie group, endowed with an adapted unitary basis $\{e_1,\ldots,e_{2n}\}$, determining the algebraic data $(a,b,v,A)$ by (\ref{eq: ad_e_2n}). Then, the first Ricci form associated to the canonical connection $\Na^t$ is 
        \begin{equation*}
            Ric^{(t)} = -\frac{1}{2}\left\{\left( 2a^2 + t\,a\,\tr\,A + (1-t)\left|v\right|^2 + b\cdot v\right) e^1 + \left( (2a + t\,\tr\,A)b + A^t b + (1-t)A^t v\right)^{\flat}\right\}\wedge e^{2n}. 
        \end{equation*}
        In particular,
        \begin{equation}\label{eq: first Bismut Ricci form A-A}
            Ric^{B} = -\frac{1}{2}\left\{\left( 2a^2 - a\,\tr\,A + 2\left|v\right|^2 + b\cdot v\right) e^1\wedge e^{2n} + \left( (2a - \tr\,A)b + A^t b + 2A^t v\right)^{\flat}\wedge e^{2n}\right\}. 
        \end{equation}
    \end{lem}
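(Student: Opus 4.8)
The plan is to feed the almost-abelian data directly into the general expression for the first Ricci form of a canonical connection on a Lie group,
\[
Ric^{(t)}(X,Y) = -\frac{1}{2}\left\{ \tr\left(\ad_{[X,Y]}\circ J\right) - t\,\tr\,\ad_{J[X,Y]} + (t-1)\,g\left(F, d[X,Y]^\flat\right)\right\},
\]
and to exploit heavily that $\n$ is an abelian ideal of codimension one. First I would observe that for any $X,Y$ one has $[X,Y]\in\n$, and that it depends only on the $e_{2n}$-components of $X,Y$ through $\ad_{e_{2n}}$. Consequently $Ric^{(t)}$ is forced to be a $2$-form of the shape $\alpha\wedge e^{2n}$ with $\alpha(e_{2n})=0$, so it suffices to evaluate $Ric^{(t)}(e_j,e_{2n})=\alpha(e_j)$ for $j=1,\ldots,2n-1$ and then reassemble $\alpha$ from its $\R e_1$- and $\n_1$-components.

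The core of the argument is a term-by-term simplification of the three summands when $w:=[e_j,e_{2n}]=-\ad_{e_{2n}}(e_j)\in\n$. For the first term, since $\n$ is abelian $\ad_w$ annihilates $\n$ and sends $e_{2n}$ to $-\ad_{e_{2n}}(w)\in\n$; because $J$ carries only $e_1$ into the $e_{2n}$-direction, the trace collapses to $\tr(\ad_w\circ J)=g(\ad_{e_{2n}}^2(e_j),e_1)$. For the second term I would split $Jw$ into its $\n$- and $e_{2n}$-parts: the $\n$-part has traceless adjoint (its image lies in $\n$ only), so only the $e_{2n}$-part survives, giving $\tr\,\ad_{Jw}=g(w,e_1)\,\tr\,\ad_{e_{2n}}=g(w,e_1)(a+\tr A)$. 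For the third term I would use $d\beta(\cdot,\cdot)=-\beta([\cdot,\cdot])$ for the left-invariant $1$-form $\beta=w^\flat$, together with the fact that $F$ pairs $e_1$ only with $e_{2n}$; the single surviving pairing yields $g(F,dw^\flat)=g(w,\ad_{e_{2n}}(e_1))=g(w,ae_1+v)$.

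With these three closed forms in hand, the remaining step is purely computational: substitute the two cases $j=1$, where $\ad_{e_{2n}}(e_1)=ae_1+v$, and $e_j\in\n_1$, where $\ad_{e_{2n}}(e_j)=(b\cdot e_j)e_1+Ae_j$, reading off the blocks from (\ref{eq: ad_e_2n}). The one mild subtlety is moving $A$ across the inner product, which produces the transpose: $b\cdot Ae_j=(A^t b)_j$ and $g(Ae_j,v)=(A^t v)_j$. Collecting the $\R e_1$-coefficient gives $2a^2+t\,a\,\tr A+(1-t)|v|^2+b\cdot v$, with the three $a^2$-contributions conspiring to $2a^2$ independently of $t$, while the $\n_1$-part assembles into $(2a+t\,\tr A)b+A^t b+(1-t)A^t v$. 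This is exactly the stated $Ric^{(t)}$, and setting $t=-1$ immediately produces $Ric^B$ as in (\ref{eq: first Bismut Ricci form A-A}).

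I expect the main obstacle to be organizational rather than conceptual: keeping the bookkeeping between the $\R e_1$- and $\n_1$-blocks straight, and staying consistent about the sign conventions hidden in $[e_j,e_{2n}]=-\ad_{e_{2n}}(e_j)$ and in $d\beta=-\beta([\cdot,\cdot])$. Once those are pinned down the three traces are each one line, and the cancellations that strip the spurious $t$-dependence from the quadratic-in-$a$ and the $ab$ terms provide a reassuring internal consistency check.
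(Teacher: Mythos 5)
Your proposal is correct and follows exactly the paper's route: the paper likewise obtains the lemma as ``a direct computation'' by substituting the almost-abelian data $(a,b,v,A)$ into the quoted formula for $Ric^{(t)}$ from~\cite{MR2988734}, and your term-by-term evaluation of the three traces (including the collapse to the single pairing $i=1$ and the transpose bookkeeping $b\cdot Ae_j=(A^tb)_j$, $g(Ae_j,v)=(A^tv)_j$) reproduces the stated coefficients, with the $t$-independence of the $2a^2$ and $2ab$ terms checking out. No gaps.
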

    
    \medskip
    
    Before studying the second-Chern--Einstein problem, we examine the Bismut--Einstein problem on almost-Hermitian almost-abelian Lie groups of any dimension. We analyze the Einstein condition together with the request that $dJdF=0$. Indeed, in Hermitian geometry the Einstein problem for the Bismut connection is stated as follows
    \begin{defn}[\cite{Gar-Str}]
        Let $(M,g,J)$ be a Hermitian manifold with $g$ a pluriclosed metric (meaning that $dJdF=0$). Then $g$ is said to be a {\em Bismut-Hermitian-Einstein metric} if $\left(Ric^B\right)^{J,+} = \lambda\,F$ for some function $\lambda$.
    \end{defn}
    These metrics were firstly studied in~\cite{MR2673720} as fixed points of a parabolic flow of metrics, the {\em pluriclosed flow}. We shall remark that there is a lack of examples of such metrics: the only known are the K\"ahelr--Einstein and the Bismut-flat metrics, the latter meaning that the whole Bismut curvature tensor vanishes ($R^{B}=0$). In the following, we prove that on  Hermitian almost-abelian Lie groups the Bismut-Hermitian-Einstein metrics are K\"ahler. However, as soon as we drop the the integrability assumption we are able to find non-almost-K\"ahler metrics that satisfy the Bismut--Einstein equation and such that $dJdF=0$. 

\subsection{Bismut--Einstein Problem}
    We study here the Bismut--Einstein problem, $\left(Ric^B\right)^{J,+}=\lambda\, F$ for $\lambda\in\R$, on an almost-Hermitian almost-abelian Lie group $\left(G,[\cdot,\cdot]_{(a,b,v,A)},g,J\right)$. Thanks to Equation (\ref{eq: first Bismut Ricci form A-A}) we see that to obtain a solution to the Einstein problem $\lambda$ must vanish. Thus, we obtain the conditions
    \begin{align}\label{eq: Bismut Ricci flat A-A}
        \begin{cases}
            2a^2 - a\,\tr\,A + 2 |v|^2 + b\cdot v = 0, &\\
           \left(2a - \tr\,A\right)b + A^t b + 2A^t v= 0, &
        \end{cases}
    \end{align} 
    
    Moreover, from a direct computation of $dJdF$ we obtain the following lemma
    
    \begin{lem}
        Let $\left(G,[\cdot,\cdot]_{(a,b,v,A)},g,J\right)$ be an almost-Hermitian almost-abelian Lie group, endowed with an adapted unitary basis $\{e_1,\ldots,e_{2n}\}$, determining the algebraic data $(a,b,v,A)$ by (\ref{eq: ad_e_2n}). Then, the metric satisfies $dJdF=0$ (here $JdF=-dF(J\cdot,J\cdot,J\cdot)$) if and only if for any $x,y,z \in \n_1$
        \begin{equation*}
            \begin{cases}
                    \left<b,x\right>\left(\left<AJy,z\right> - \left<AJz,y\right>\right) - 
                    \left<b,y\right>\left(\left<AJx,z\right> - \left<AJz,x\right>\right) + 
                    \left<b,z\right>\left(\left<AJx,y\right> - \left<AJy,x\right>\right) = 0,\\
                    a\left(\left<AJy,z\right> - \left<AJz,y\right>\right) + \left<AJAy,z\right> - \left<AJz,Ay\right> + \left<AJy,Az\right> - \left<AJAz,y\right> = 0.
            \end{cases}
        \end{equation*}
        
    \end{lem}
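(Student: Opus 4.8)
The plan is to compute $dJdF$ directly from the Chevalley--Eilenberg differential of $\g$ and to read off the vanishing conditions component by component. Since $\n$ is abelian, the whole differential is governed by $\ad_{e_{2n}}$: one has $de^{2n}=0$, and for every left-invariant form $\alpha\in\Lambda^\bullet\n^*$ the identity $d\alpha=e^{2n}\wedge L\alpha$ holds, where $L$ is the derivation of $\Lambda^\bullet\n^*$ induced by $\ad_{e_{2n}}$, i.e. $(L\beta)(x)=-\beta(\ad_{e_{2n}}x)$ on $1$-forms. Writing $F=e^1\wedge e^{2n}+\omega_1$, where $\omega_1=\sum_{i=2}^n e^i\wedge e^{2n-i+1}\in\Lambda^2\n_1^*$ is the fundamental form of $(\n_1,J_1,g)$, one checks that $d(e^1\wedge e^{2n})=0$ (indeed $de^1$ already carries a factor $e^{2n}$), so that $dF=e^{2n}\wedge L\omega_1$. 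Evaluating the derivation on $\n_1$ via $\ad_{e_{2n}}x=\langle b,x\rangle e_1+Ax$ for $x\in\n_1$, together with $\omega_1(e_1,\cdot)=0$, isolates the $\Lambda^2\n_1^*$-part of $L\omega_1$.

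Next I would apply $J$. From $Je_1=e_{2n}$ one gets $Je^1=e^{2n}$ and $Je^{2n}=-e^1$, while $J$ preserves $\n_1$; since $J$ acts multiplicatively on the exterior algebra with the stated convention, this yields $JdF=-e^1\wedge J(L\omega_1)$. Decomposing $J(L\omega_1)=e^{2n}\wedge\mu+\eta_0$ with $\mu\in\n_1^*$ and $\eta_0\in\Lambda^2\n_1^*$, the key point is that, because $J$ is $g$-orthogonal, evaluating on $y,z\in\n_1$ produces the explicit expression $\eta_0(y,z)=(L\omega_1)(Jy,Jz)=\langle AJz,y\rangle-\langle AJy,z\rangle$; the summand $e^{2n}\wedge\mu$ arises from the $e^1$-component of $L\omega_1$ (the one carrying $v$) and will turn out to be irrelevant.

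Finally I would differentiate once more. Since $d(e^1\wedge e^{2n}\wedge\mu)=0$, only $d(e^1\wedge\eta_0)$ survives, so $dJdF=-d(e^1\wedge\eta_0)$. Using $de^1=-e^{2n}\wedge(ae^1+b^\flat)$ and $d\eta_0=e^{2n}\wedge L\eta_0$, and writing $\xi\in\Lambda^2\n_1^*$ for the $\Lambda^2\n_1^*$-part of $L\eta_0$, namely $\xi(y,z)=-\eta_0(Ay,z)-\eta_0(y,Az)$, the computation collapses to
$$dJdF=e^{2n}\wedge b^\flat\wedge\eta_0+e^1\wedge e^{2n}\wedge(\xi-a\eta_0).$$
These two summands lie in the $g$-orthogonal subspaces $e^{2n}\wedge\Lambda^3\n_1^*$ and $e^1\wedge e^{2n}\wedge\Lambda^2\n_1^*$, so $dJdF=0$ if and only if $b^\flat\wedge\eta_0=0$ and $\xi=a\eta_0$. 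Expanding $b^\flat\wedge\eta_0=0$ on $x,y,z\in\n_1$ with the formula for $\eta_0$ reproduces exactly the first stated identity, while $\xi-a\eta_0=0$ on $y,z\in\n_1$, after substituting $\eta_0(Ay,z)$ and $\eta_0(y,Az)$, reproduces the second. I expect the main obstacle to be purely the sign-and-degree bookkeeping: one must consistently track which terms carry the factors $e^1$ and $e^{2n}$, and in particular verify that the $e^1$-components produced by $L$ (those involving $v$) contribute nothing to the two relevant $4$-form components, so that only $a$, $b$ and $A$ enter the final equations.
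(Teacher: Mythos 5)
Your computation is correct: the identities $dF=e^{2n}\wedge L\omega_1$, $\eta_0(y,z)=\langle AJz,y\rangle-\langle AJy,z\rangle$, $\xi(y,z)=-\eta_0(Ay,z)-\eta_0(y,Az)$, and the final decomposition $dJdF=e^{2n}\wedge b^\flat\wedge\eta_0+e^1\wedge e^{2n}\wedge(\xi-a\eta_0)$ all check out, and the vanishing of the two (independent) components is exactly equivalent to the two stated equations, with no $v$-dependence surviving as you predicted. The paper establishes this lemma precisely by such a direct computation of $dJdF$ (it records no further details), so your argument — organized through the derivation $L$ with $d=e^{2n}\wedge L$ on $\Lambda^\bullet\n^*$ — is essentially the paper's approach, carried out in full.
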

    \begin{rem}
    In the Hermitian case, i.e. when $b=0$ and $A$ commutes with $J$, the condition $dJdF=0$ (the metric $g$ is then called SKT~\cite{MR1006380}) is equivalent to ask that
        \begin{equation}\label{eq: SKT condition on A-A}
            aA + A^2 + A^tA \in \mathfrak{so}(\n_1) \;,
        \end{equation}
        as showed in \cite[Lemma 4.4]{MR3957836}.
    \end{rem}
    Thus, in the Hermitian case, taking the trace in (\ref{eq: SKT condition on A-A}) we see that $a\,\tr\,A \leq 0$ with equality if and only if $A\in\mathfrak{so}(\n_1)$. This, together with the first equation in (\ref{eq: Bismut Ricci flat A-A}) (with $b=0$), imply that $v=0$ and $A \in\mathfrak{so}(\n_1)$, hence also $tr\,A = 0$ and then $\theta = 0$ (see Equation (\ref{lee-form})). We have the following 
    \begin{prop}
        On a Hermitian almost-Abelian Lie group $\left(G,[\cdot,\cdot]_{(a,b,v,A)},g,J\right)$ the Bismut-Hermitian-Einstein metrics, are K{\"a}hler-Einstein.
    \end{prop}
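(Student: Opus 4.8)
The plan is to combine the SKT characterization with the Bismut--Ricci-flat conditions to force the structure to be trivial. Since we are in the Hermitian case, we have $b=0$ and $A$ commutes with $J_1$, so the data reduces to $(a,0,v,A)$. The starting point is the trace inequality $a\,\tr\,A \leq 0$, derived from taking the trace in the SKT condition~(\ref{eq: SKT condition on A-A}): indeed, $\tr(A^2 + A^tA) = \tr(A^2) + |A|^2 \geq 0$ because $\tr(A^2) = \sum_i \lambda_i^2$ contributes nonnegatively once one is careful (more precisely, $A^2 + A^tA$ being skew-symmetric forces its trace to vanish, giving $a\,\tr\,A = -\tr(A^2 + A^tA)$, and this quantity is controlled). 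First I would make this trace computation fully explicit and record that equality $a\,\tr\,A = 0$ holds if and only if $A \in \mathfrak{so}(\n_1)$.

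Next I would feed this into the Bismut--Ricci-flat system~(\ref{eq: Bismut Ricci flat A-A}) specialized to $b=0$. The first equation becomes $2a^2 - a\,\tr\,A + 2|v|^2 = 0$. Here is the key observation: since $a\,\tr\,A \leq 0$, we have $-a\,\tr\,A \geq 0$, so the left-hand side is a sum of three nonnegative terms $2a^2 \geq 0$, $-a\,\tr\,A \geq 0$, and $2|v|^2 \geq 0$. For the sum to vanish, each term must vanish separately. This immediately gives $a = 0$, $v = 0$, and $a\,\tr\,A = 0$. The vanishing of $a\,\tr\,A$ combined with the equality case of the trace inequality then forces $A \in \mathfrak{so}(\n_1)$, whence $\tr\,A = 0$ as well.

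Having established $a = 0$, $v = 0$, $b = 0$, and $\tr\,A = 0$, I would conclude by reading off the Lee form from~(\ref{lee-form}): $\theta = (Jv)^\flat - (\tr\,A)e^{2n} = 0$. A vanishing Lee form on a Hermitian manifold means $dF = 0$, i.e. the metric is K\"ahler. Since a Bismut--Hermitian--Einstein K\"ahler metric has $\rho^\nabla = (Ric^B)^{J,+} = \lambda F$ (the Bismut and Chern connections coincide in the K\"ahler case), the metric is automatically K\"ahler--Einstein, which is the desired conclusion.

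The main obstacle I anticipate is the careful bookkeeping in the trace step: one must verify that $\tr(A^2 + A^tA) \geq 0$ and correctly identify its equality case as $A$ skew-symmetric. Writing $A = S + K$ with $S$ symmetric and $K$ skew-symmetric, one computes $A^2 + A^tA = 2S^2 + SK - KS + KS - SK + \ldots$; more directly, $\tr(A^tA) = |A|^2 = |S|^2 + |K|^2$ and $\tr(A^2) = |S|^2 - |K|^2$, so $\tr(A^2 + A^tA) = 2|S|^2 \geq 0$, vanishing exactly when $S = 0$, i.e. $A \in \mathfrak{so}(\n_1)$. Once this is pinned down, the rest of the argument is the elementary observation that a vanishing sum of nonnegative terms forces each to vanish, so the remainder is routine.
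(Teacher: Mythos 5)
Your proposal is correct and follows essentially the same route as the paper's own proof: the paper likewise takes the trace of the SKT condition~(\ref{eq: SKT condition on A-A}) to get $a\,\tr A\leq 0$ with equality exactly when $A\in\mathfrak{so}(\n_1)$, then reads the first equation of~(\ref{eq: Bismut Ricci flat A-A}) (with $b=0$) as a sum of nonnegative terms, forcing $a=0$, $v=0$, $A\in\mathfrak{so}(\n_1)$, hence $\tr A=0$ and $\theta=0$ via~(\ref{lee-form}). Your explicit bookkeeping $\tr\left(A^2+A^tA\right)=2|S|^2$ is exactly the computation the paper leaves implicit, and it is right.

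One caveat on your last step: the inference ``vanishing Lee form on a Hermitian manifold means $dF=0$'' is only valid in real dimension $4$, where $dF=\theta\wedge F$; in dimension $2n\geq 6$ the condition $\theta=0$ only says the metric is balanced ($dF^{n-1}=0$), and balanced Hermitian metrics need not be K\"ahler. Since the proposition and the surrounding subsection are stated for almost-abelian Lie groups of arbitrary dimension, you should instead conclude K\"ahler directly from the data you have already pinned down: with $a=0$, $b=0$, $v=0$ and $A\in\mathfrak{so}(\n_1)$ commuting with $J_1$, the only potentially nonzero components of $dF$ are, for $x,y\in\n_1$,
\begin{equation*}
dF(e_{2n},x,y)=-F(Ax,y)-F(x,Ay)=-g\left((JA-AJ)x,y\right)=0,
\end{equation*}
so $dF=0$ in every dimension. (The paper is equally terse at this point---it passes from $\theta=0$ to the proposition without further comment---so this is a repair of a shared gap rather than a divergence from the paper's argument; in dimension $4$ your inference is already airtight.)
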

    On the other hand, if we take $A\in\mathfrak{so}(\n_1)$ commuting with $J$ but $b\neq 0 \neq v$, we obtain an almost-Hermitian almost-abelian Lie group $\left(G,[\cdot,\cdot]_{(a,b,v,A)},g,J\right)$ which satisfies $dJdF=0$ but is not almost-K{\"a}hler (i.e. $\theta\neq 0$). Then, any solution of the equations 
    \begin{align}
        \begin{cases}
            2a^2 + 2 |v|^2 + b\cdot v = 0 \\
            2ab + A^tb + 2A^tv = 0 
        \end{cases}
    \end{align} 
    gives a first-Bismut--Einstein metric on $G$. We focus on the $4$-dimensional unimodular case ($a=-tr\,A$), which corresponds to $a=0$.
     
    \begin{thm}\label{thm: classification-Bismut}
        Let $\g$ be a $4$-dimensional unimodular almost-abelian Lie algebra equipped with a left-invariant almost-Hermitian structure $(g,J)$ such that $A\in\mathfrak{so}(\n_1)$ and commutes with $J$, in particular it satisfies $dJdF=0.$ Suppose that $(g,J)$ is a solution to the Bismut--Einstein problem. Then $\g$ is isomorphic to one of the following Lie algebras
        \begin{enumerate}
            \item $\mathcal{A}_{3,6}\oplus \mathcal{A}_1:\,[e_1,e_3]=-e_2,\quad [e_2,e_3]=e_1$. The solution is not almost-K\"ahler. \\
            \item $\mathcal{A}_{3,1}\oplus \mathcal{A}_1:\,[e_1,e_2]=e_3$. The solution is almost-K\"ahler.
        \end{enumerate}
        Both Lie algebras admit compact quotients.
    \end{thm}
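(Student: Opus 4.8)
The plan is to turn the Bismut--Einstein equation into explicit algebraic constraints on the data $(a,b,v,A)$ and then read off the isomorphism type of $\g$ from the conjugacy class of $\ad_{e_4}|_{\n}$. First I would record what the standing hypotheses impose. Since $A\in\mathfrak{so}(\n_1)$ we have $A^{t}=-A$ and $\tr A=0$, while unimodularity means $\tr(\ad_{e_4})=a+\tr A=0$; together these force $a=0$. Because $\dim\n_1=2$ and $\mathfrak{so}(\n_1)=\R\,J_1$, asking that $A\in\mathfrak{so}(\n_1)$ commute with $J_1$ amounts to $A=c\,J_1$ for some $c\in\R$. Substituting $a=0$ and $\tr A=0$ into the Bismut--Ricci-flat equations~(\ref{eq: Bismut Ricci flat A-A}) — which the text has already shown to be equivalent to the Bismut--Einstein condition for this class of groups — collapses them to
\[
2|v|^2+b\cdot v=0,\qquad A(b+2v)=0.
\]

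Next I would split into cases according to whether $A$ is invertible. If $c\neq0$ then $A=c\,J_1$ is invertible, so the second equation forces $b=-2v$; plugging this into the first gives $2|v|^2+(-2v)\cdot v=0$, which holds identically. Thus for $c\neq0$ the solutions are exactly $A=c\,J_1$, $b=-2v$, with $v\in\n_1$ arbitrary. If instead $c=0$ (so $A=0$) the second equation is vacuous and only the single constraint $2|v|^2+b\cdot v=0$ survives.

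The decisive step is to recognize the isomorphism class of $\g$ from $M:=\ad_{e_4}|_{\n}$, using the standard fact that two almost-abelian Lie algebras are isomorphic exactly when their structure matrices coincide up to conjugation and a nonzero real scalar; concretely one compares the real Jordan form of $M$ after rescaling the eigenvalues. In the adapted basis $(e_1,e_2,e_3)$ of $\n$ one has
\[
M=\begin{pmatrix} 0 & b_2 & b_3\\ v_2 & 0 & -c\\ v_3 & c & 0\end{pmatrix},
\]
and a direct expansion, after using $b=-2v$ when $c\neq0$ and $b\cdot v=-2|v|^2$ when $c=0$, gives the characteristic polynomial $\det(M-\lambda I)=-\lambda(\lambda^2+c^2+2|v|^2)$. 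Hence, whenever $c^2+2|v|^2>0$, the matrix $M$ has the three distinct eigenvalues $0,\pm i\sqrt{c^2+2|v|^2}$ and is diagonalizable; up to rescaling its real Jordan form is a rotation block together with a zero, which is precisely $\mathcal{A}_{3,6}\oplus\mathcal{A}_1$. This covers $c\neq0$ with any $v$ as well as $c=0$ with $v\neq0$, and since $\theta=(Jv)^\flat-(\tr A)e^4=(Jv)^\flat$ one may take $v\neq0$ to obtain a genuinely non-almost-K\"ahler solution. The only remaining possibility is $c=0$ and $v=0$: then $M$ is nilpotent with $M^2=0$, and for $b\neq0$ it is a single $2\times2$ Jordan block plus a zero, i.e. the Heisenberg-type algebra $\mathcal{A}_{3,1}\oplus\mathcal{A}_1$, which is necessarily almost-K\"ahler because $\theta=0$; the totally degenerate subcase $b=v=0$ yields the flat abelian $\R^4$, which I would discard as the trivial K\"ahler solution. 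The assertion that both algebras admit compact quotients is then the existence of lattices in the corresponding simply connected groups: this is automatic for the nilpotent $\mathcal{A}_{3,1}\oplus\mathcal{A}_1$ by Mal'cev's criterion and is the same lattice already used for $\mathcal{A}_{3,6}\oplus\mathcal{A}_1$ in Section~\ref{Sec_3}.

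I expect the main obstacle to be the isomorphism-class bookkeeping rather than any curvature computation: several distinct parameter regimes ($c\neq0$ with arbitrary $v$, and $c=0$ with $v\neq0$) all collapse onto the same algebra $\mathcal{A}_{3,6}\oplus\mathcal{A}_1$, so the argument must go through the conjugacy-plus-scaling normal form of $M$ rather than its raw entries, and one must keep the genuinely different nilpotent and abelian degenerations (both arising at $v=0$) carefully separated. A secondary point requiring attention is matching the almost-K\"ahler versus non-almost-K\"ahler labels to the vanishing of $\theta$, equivalently of $v$, consistently across these overlapping regimes.
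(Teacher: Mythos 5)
Your proposal is correct and takes essentially the same route as the paper: impose $a=0$ from unimodularity and $\operatorname{tr}A=0$, reduce the Bismut--Einstein condition to the system~(\ref{eq: Bismut Ricci flat A-A}), split on whether the rotation part of $A$ vanishes (your $c\neq 0$ versus $c=0$ is exactly the paper's $A^1_2\neq 0$ versus $A^1_2=0$), and identify $\g$ via the real Jordan form of $\ad_{e_4}|_{\n}$ up to scaling, with the same lattice references for compact quotients. Your uniform characteristic polynomial $-\lambda\left(\lambda^2+c^2+2|v|^2\right)$ merely merges the paper's case-by-case Jordan readings, and your explicit treatment of the degenerate subcases $v=0$ (where the paper asserts $b=-2v\neq 0$, implicitly invoking the standing assumption $b\neq 0\neq v$ from the preceding paragraph) is, if anything, slightly more careful.
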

    \begin{proof}  
        The isomorphism classes of almost-abelian Lie algebras can be described using Jordan forms of $\ad_{e_{4}|_\n}$ up to scaling (see~\cite[Lemma 2.1]{MR3763412} and~\cite{MR2138348}).
        Denote by $A^i_j$ the $(i,j)$-th element of $A$. Equations~(\ref{eq: Bismut Ricci flat A-A}) become
        \begin{align}\label{eq: Bismut-4-dim}
            \begin{cases}
                2 |v|^2 + b\cdot v =& 0, \\
                A^1_2\,(b_1+2v_1)=& 0, \\
                A^1_2\,(b_2+2v_2)=&0.
            \end{cases}
        \end{align} 
        We have then two cases:
        \begin{enumerate}
            \item $A^1_2=0.$ Then, 
            \begin{itemize}
                \item either $b\cdot v=-2 |v|^2<0$: the canonical Jordan form of  $\ad_{e_{4}|_\n}$ up to scaling is
                $\begin{pmatrix} 
                    0 & 0 &0\\
                    0 & 0&1\\
                    0 & -1&0\\
                \end{pmatrix},$ which corresponds to $\mathcal{A}_{3,6}\oplus \mathcal{A}_1$. $v\neq 0$ so $dF\neq 0$.\\
                \item or $b\cdot v=v=0$: the canonical Jordan form of  $\ad_{e_{4}|_\n}$ up to scaling is
                $\begin{pmatrix} 
                    0 & 0 &0\\
                    0 &0 &1\\
                    0 & 0&0\\
                \end{pmatrix},$ which corresponds to $\mathcal{A}_{3,1}\oplus \mathcal{A}_1$. $v=\tr\,A=0$ so $dF= 0.$\\
            \end{itemize}
            \item $A^1_2\neq 0.$ Then, $b=-2v\neq 0$ and the canonical Jordan form of  $\ad_{e_{4}|_\n}$ up to scaling is
            $\begin{pmatrix} 
                0 & 0 &0\\
                0 & 0&1\\
                0 & -1&0\\
            \end{pmatrix},$ which corresponds to $\mathcal{A}_{3,6}\oplus \mathcal{A}_1$.  $v\neq 0$ so $dF\neq 0.$
        \end{enumerate}
   \end{proof}
    
    
\subsection{Second-Chern--Einstein Probelm}
    Here we study the second-Chern--Einstein problem on an almost-Hermitian almost-abelian Lie group $\left(G,[\cdot,\cdot]_{(a,b,v,A)},g,J\right)$ of real dimension 4. We have seen that in the integrable case it reduces to the the Bismut--Einstein problem~(Proposition~\ref{prop: second chern first bismut}), studied in the previous section; while in the non-integrable case a factor depending on the Nijenhuis tensor pops up:
    \begin{equation}\label{eq: Nijenhuis factor}
        \frac{1}{2}\sum_{i,j}^4\left(N_J(e_i,e_j)\right)^{\flat}\wedge\left(J N_J(e_i,e_j)\right)^{\flat}.
    \end{equation}
    Choose an adapted unitary basis $\{e_1,e_2,e_3,e_4\}$ for $\g$, determining the algebraic data $(a,b,v,A)$. Then (\ref{eq: Nijenhuis factor}) can be written as
    \begin{multline}\label{eq: Nijenhuis factor A-A}
        \frac{1}{2}\sum_{i,j}^4\left(N_J(e_i,e_j)\right)^{\flat}\wedge\left(J N_J(e_i,e_j)\right)^{\flat}= |b|^2 e^1\wedge e^4 + \left(\left(A^2_1 + A^1_2\right)^2 + \left(A^1_1 - A^2_2\right)^2\right)e^2\wedge e^3 \\
        + \left(b_2\left(A^1_1 - A^2_2\right) - b_1\left(A^1_2 + A^2_1\right)\right)e^1\wedge e^2 + \left(b_2\left(A^2_1 + A^1_2\right) + b_1\left(A^1_1 - A^2_2\right)\right)e^1\wedge e^3  \\ 
        + \left(b_1\left(A^1_1 - A^2_2\right) + b_2\left(A^1_2 + A^2_1\right)\right)e^2\wedge e^4 + \left(b_1\left(A^2_1 + A^1_2\right) - b_2\left(A^1_1 - A^2_2\right)\right)e^3\wedge e^4,
    \end{multline}
    where $b=(b_1,b_2)$ and $A^i_j$ is the $(i,j)$-th element of $A$.

\begin{thm}\label{thm: classification}
Let $\g$ be a $4$-dimensional unimodular almost-abelian Lie algebra equipped with a left-invariant almost-Hermitian non-Hermitian structure $(g,J)$ such that the Lee form $\theta$ is $D^g$-parallel and non-zero. Suppose
that $(g,J)$ is a solution to the second-Chern--Einstein problem. Then $\g$ is isomorphic to one of the following Lie algebras
\begin{enumerate}
\item $\mathcal{A}_{3,6}\oplus \mathcal{A}_1:\,[e_1,e_3]=-e_2,\quad [e_2,e_3]=e_1$.\\
\item $\mathcal{A}_{3,4}\oplus \mathcal{A}_1:\,[e_1,e_3]=e_1,\quad [e_2,e_3]=-e_2.$\\
\end{enumerate}
Both Lie algebras admit compact quotients.
\end{thm}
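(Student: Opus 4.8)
The plan is to push everything down to the algebraic data $(a,b,v,A)$ of $\ad_{e_4}|_{\n}$ from (\ref{eq: ad_e_2n}) and to use Proposition~\ref{prop: second chern first bismut}, which writes $r=(Ric^B)^{J,+}+\frac14\left(2\delta^g\theta+2\|\theta\|^2-\|N\|^2\right)F+\frac12\sum(N(e_i,e_j))^{\flat}\wedge(JN(e_i,e_j))^{\flat}$. Substituting the explicit Bismut--Ricci form (\ref{eq: first Bismut Ricci form A-A}) and the Nijenhuis term (\ref{eq: Nijenhuis factor A-A}), I would expand each $J$-invariant $2$-form in the adapted basis $e^1\wedge e^4,\ e^2\wedge e^3,\ e^1\wedge e^3+e^2\wedge e^4,\ e^1\wedge e^2-e^3\wedge e^4$ of the space of $(1,1)$-forms. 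Since $r$, $(Ric^B)^{J,+}$, $F$ and the Nijenhuis term are all $J$-invariant, the second-Chern--Einstein condition $r=\frac{s^H}{4}F$ reduces to three scalar equations: the two ``off-diagonal'' coefficients vanish (these involve $w:=(2a-\tr A)b+A^{t}b+2A^{t}v$ and the gradient-type Nijenhuis coefficients $b_i(A^1_1-A^2_2)$, $b_i(A^1_2+A^2_1)$), and the two ``diagonal'' coefficients agree, giving the trace identity $|b|^2-\left((A^2_1+A^1_2)^2+(A^1_1-A^2_2)^2\right)=\tfrac12\left(2a^2-a\,\tr A+2|v|^2+b\cdot v\right)$.

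Next I would compute the Levi--Civita connection of the left-invariant metric by the Koszul formula. Because $\n$ is abelian, the only nonzero brackets are $[e_4,\cdot]$, so writing $M$ for the matrix of $\ad_{e_4}|_{\n}$ in $\{e_1,e_2,e_3\}$ and setting $S=\frac12(M+M^{t})$, $K=\frac12(M-M^{t})$, one obtains for $i,j\le 3$ the compact formulas $D^g_{e_i}e_j=S_{ij}e_4$, $\ D^g_{e_i}e_4=-\sum_k S_{ik}e_k$, $\ D^g_{e_4}e_j=\sum_k K_{kj}e_k$, and $D^g_{e_4}e_4=0$. By (\ref{lee-form}) the dual Lee field is $\theta^{\sharp}=-v_2e_2+v_1e_3-(\tr A)e_4$, so imposing $D^g_X\theta^{\sharp}=0$ for all $X$ splits into: the $e_k$-part of $D^g_{e_i}\theta^{\sharp}$ gives $(\tr A)\,S_{ik}=0$; the $e_4$-part gives $v_1S_{i3}-v_2S_{i2}=0$; and $D^g_{e_4}\theta^{\sharp}=0$ gives $v_1K_{k3}-v_2K_{k2}=0$.

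The heart of the argument is then algebraic bookkeeping. Evaluating $(\tr A)\,S_{11}=(\tr A)\,a$ together with unimodularity $a=-\tr A$ yields $(\tr A)^2=0$, hence $\tr A=0$ and $a=0$; in particular $\theta\neq 0$ forces $v\neq 0$. With $a=\tr A=0$ the $K$-relations give $v_i(A^1_2-A^2_1)=0$, so $A^1_2=A^2_1$, while the $K$-relation for $k=1$ gives $v_1b_2-v_2b_1=0$, i.e. $b\parallel v$. The two $S$-relations for $i=2,3$ then form a linear system in $(A^1_1,A^1_2)$ with determinant $-|v|^2\neq 0$, forcing $A^1_1=A^1_2=0$, whence $A=0$. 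Writing $b=\mu v$, the off-diagonal second-Chern--Einstein equations become automatic ($w=0$ and the Nijenhuis $\gamma$-coefficients vanish), and the trace identity collapses to $\mu^2=\frac{2+\mu}{2}$, that is $2\mu^2-\mu-2=0$, so $\mu=\frac{1\pm\sqrt{17}}{4}$. The structure is non-Hermitian since $b=\mu v\neq 0$.

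Finally I would read off the isomorphism type. With $a=0$, $A=0$, $b=\mu v$ the matrix $M$ has characteristic polynomial $-\lambda(\lambda^2-\mu|v|^2)$, hence eigenvalues $0,\pm|v|\sqrt{\mu}$. For $\mu=\frac{1+\sqrt{17}}{4}>0$ the nonzero eigenvalues are real of opposite sign, so up to scaling the Jordan form of $\ad_{e_4}|_{\n}$ is $\mathrm{diag}(0,1,-1)$, giving $\mathcal{A}_{3,4}\oplus\mathcal{A}_1$; for $\mu=\frac{1-\sqrt{17}}{4}<0$ they are purely imaginary, giving the skew block and hence $\mathcal{A}_{3,6}\oplus\mathcal{A}_1$, exactly in the normalization used in the proof of Theorem~\ref{thm: classification-Bismut}; existence of lattices is already recorded in Section~\ref{Sec_3}. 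I expect the main obstacle to be not conceptual but the careful Koszul computation of the second step: tracking the symmetric and antisymmetric parts $S,K$ of $M$, and checking that it is genuine parallelism of $\theta$ (stronger than the Killing hypothesis of Lemma~\ref{killing}) that forces $A=0$ and thereby excludes all other Jordan types.
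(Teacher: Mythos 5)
Your proposal is correct and follows essentially the same route as the paper's proof: it reduces the second-Chern--Einstein condition via Proposition~\ref{prop: second chern first bismut}, \eqref{eq: first Bismut Ricci form A-A} and \eqref{eq: Nijenhuis factor A-A} to the same three scalar equations, extracts from $D^g\theta=0$ the same constraints (unimodularity forcing $a=\tr A=0$, then $b\parallel v$ and $A^1_2=A^2_1$, then $A=0$), and arrives at the same quadratic $2\mu^2-\mu-2=0$ for $b=\mu v$, reading off $\mathcal{A}_{3,4}\oplus\mathcal{A}_1$ or $\mathcal{A}_{3,6}\oplus\mathcal{A}_1$ from the sign of $b\cdot v=\mu|v|^2$. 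The only cosmetic difference is that your determinant argument $-|v|^2\neq 0$ in the unknowns $(A^1_1,A^1_2)$ replaces the paper's case split $v_1\neq 0$ versus $v_1=0$, which is a mild streamlining rather than a different method.
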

\begin{rem}
A classification of $4$-dimensional almost-abelian Lie algebras with locally conformally symplectic structure is given in~\cite{MR3763412} (see also~\cite{MR4088745}). In the notation of~\cite{MR3763412}, the above Lie algebras correspond respectively to $\mathfrak{r}^\prime_{3,0}\times\mathbb{R}$ and $\mathfrak{r}_{3,-1}\times\mathbb{R}.$
\end{rem}

 \begin{proof}   
We recall that the isomorphism classes of almost-abelian Lie algebras can be described using Jordan forms of $\ad_{e_{4}|_\n}$ up to scaling (see~\cite[Lemma 2.1]{MR3763412} and~\cite{MR2138348}). 

The Lie algebra $\g$ is unimodular i.e. $\tr\,A=-a.$ Thanks to equations (\ref{eq: first Bismut Ricci form A-A}) and Proposition~\ref{prop: second chern first bismut}, we get
     \begin{eqnarray}\label{eq: Chern-4-dim}
        \begin{cases}
            2|b|^2-3\,a^2- b\cdot v-2|v|^2 = 2(A^1_1-A^2_2)^2+2(A^1_2+A^2_1)^2, \\
           3ab_1+A^1_1b_1+A^2_1b_2+2A^1_1v_1+2A^2_1v_2= 4b_1(A^1_1-A^2_2)+4b_2(A^1_2+A^2_1), \\
           3ab_2+A^1_2b_1+A^2_2b_2+2A^1_2v_1+2A^2_2v_2= 4b_1(A^1_2+A^2_1)-4b_2(A^1_1-A^2_2).\nonumber
        \end{cases}
    \end{eqnarray} 

The Lee form is given by $$\theta=v_1\,e^3-v_2\,e^2+a\,e^4.$$ Then, the condition $D^g\theta=0$ implies the following
\begin{eqnarray*}
a&=&0,\\
b_1v_2&=&b_2v_1,\\
v_1(A^1_2-A^2_1)&=&0,\\
v_2(A^1_2-A^2_1)&=&0,\\
v_1(A^1_2+A^2_1)&=&2A^1_1v_2,\\
v_2(A^1_2+A^2_1)&=&2A^2_2v_1.
\end{eqnarray*}

Suppose that $v_1\neq 0.$ Then the above equations imply that $A^1_1=A^2_2=A^1_2=A^2_1=0$ and
 \begin{eqnarray*}
        \begin{cases}
          2b_1^2-b_1v_1-2v_1^2=0, \\
           2b_2^2-b_2v_2-2v_2^2=0.\nonumber\\
        \end{cases}
    \end{eqnarray*} 
We remark that $b\cdot v\neq 0$. We have then two cases:
\begin{enumerate}
\item $b\cdot v>0$: the canonical Jordan form of  $\ad_{e_{4}|_\n}$ up to scaling is
$\begin{pmatrix} 
        0 & 0 &0\\
        0 & 1&0\\
        0 & 0&-1\\
        \end{pmatrix},$ which corresponds to $\mathcal{A}_{3,4}\oplus \mathcal{A}_1.$
 
\item $b\cdot v<0$: the canonical Jordan form of  $\ad_{e_{4}|_\n}$ up to scaling is
$\begin{pmatrix} 
        0 & 0 &0\\
        0 & 0&1\\
        0 & -1&0\\
        \end{pmatrix},$ which corresponds to $\mathcal{A}_{3,6}\oplus \mathcal{A}_1$.
\end{enumerate}
Now, if we suppose that $v_1=0$ then $\theta=v_2\,e^2\neq 0$ implies that $v_2\neq 0.$ We can then deduce
that $b_1=A^1_1=A^2_2=A^1_2=A^2_1=0$. Because $J$ is non integrable, we have $b_2\neq 0.$ We conclude
that $2b_2^2-b_2v_2-2v_2^2=0,$ with $b_2v_2\neq 0.$ We obtain then the same canonical Jordan forms as above.

\end{proof}

\bibliographystyle{abbrv}


\end{document}